\documentclass[a4paper,num-refs]{oup-contemporary-spidec}

\journal{general}
\jlogo{}

\usepackage{graphicx}
\usepackage{siunitx}
\usepackage{amssymb}
\usepackage{amsmath}
\usepackage{amsthm}
\usepackage{amscd}
\newtheorem{theorem}{Theorem}[section]

\newtheorem{lemma}[theorem]{Lemma}
\newtheorem{remark}[theorem]{Remark}
\newtheorem{assumption}[theorem]{Assumption}
\newtheorem{definition}[theorem]{Definition}
\usepackage{bm}
\usepackage{bigints}
\usepackage{amsthm}

\newtheorem{testcase}{Test} 

\usepackage[colorlinks]{hyperref}
\hypersetup{
    linkcolor=jcolour,
    citecolor=jcolour,
    urlcolor=jcolour
}
\usepackage{orcidlink}

\newenvironment{acknowledgement}{\par\addvspace{17pt}\small\rmfamily\noindent}{\par\addvspace{6pt}}

\title{Stable Positive Integral Deferred Correction Methods for Positive Dynamical Systems}

\author[1,2,\authfn{1},\authfn{2},\orcidlink{0000-0002-1869-945X}]{Mario Pezzella}

\affil[1]{CNR National Research Council of Italy -- Institute for Applied Mathematics ``M. Picone". Via P. Castellino 111, Naples 80131, Italy} 
\affil[2]{University of Naples Federico II, Department of Mathematics and Applications “Renato Caccioppoli”, Via Cintia, Naples, 80126, Italy}

\authnote{\authfn{1}Corresponding author \href{mailto:mario.pezzella@cnr.it}{mario.pezzella@cnr.it}} 
\authnote{\authfn{2} Member of the \emph{Gruppo Nazionale Calcolo Scientifico} (GNCS) of the \emph{Istituto Nazionale di Alta Matematica} (INdAM), the \emph{Research Italian network on Approximation} (RITA), and the SIMAI Group \emph{Numerical and Analytical Approximation of Data and Functions with Applications} (ANA\&A).}

\papercat{Research Paper Preprint}

\runningauthor{Mario Pezzella}

\jyear{2026}

\begin{document}

\begin{frontmatter}
\maketitle

\begin{abstract}
    In this paper, we introduce the class of \emph{Stable Positive Integral Deferred Correction} (SPIDeC) methods for the numerical integration of positive dynamical systems. The proposed framework embeds a deferred correction mechanism within an exponential-type Volterra reformulation of the underlying differential problem. The resulting multiplicative structure guarantees the unconditional preservation of both positivity and equilibria, independently of the integration stepsize. Arbitrarily high-order accuracy is systematically achieved through successive explicit-in-sweep corrections applied to a low-order base approximation. From a stability viewpoint, the SPIDeC integrators are L-stable and exactly reproduce the continuous semigroup generated by diagonal linear operators. Furthermore, when Gauss--Radau quadrature nodes are employed, the associated discrete flow asymptotically approaches a logarithmically contractive map as the number of sweeps increases, ensuring stability. Numerical experiments are provided to validate the theoretical analysis and illustrate the practical performance of the proposed methods.
\end{abstract}

\begin{keywords}
$\bullet$ Unconditional positivity $\ \bullet$ Structure-preserving $\ \bullet$ Geometric integration $\ \bullet$ Exponential Volterra equation $\ \bullet$ Deferred correction $\ \bullet$ Positive ODEs
\end{keywords}

\end{frontmatter}

\vspace{-1\baselineskip}

\section{Introduction}
In this work, we address initial value problems of the form
\begin{equation}\label{eq:ODE_system}
\begin{cases}
    \bm{y}'(t) = \bm{f}(\bm{y}(t)), \qquad  t \in [0,T],\\
    \bm{y}(0) = \bm{y}_0 \in \mathbb{R}_{+}^{N},
\end{cases}
\end{equation}
arising either directly as ordinary differential equation (ODE) models or from the spatial discretization of partial differential equations (PDEs). Here, $\bm{y}(t) = [y_1(t),\dots,y_N(t)]^\top$ and
$\bm{f} = [f_1,\dots,f_N]^\top$, with $\bm{f} : \Omega \to \mathbb{R}^N$ sufficiently smooth function defined on an open set containing $\mathbb{R}_{+}^{N}=\{\bm{\eta}=[\eta_1,\dots,\eta_N]^\top \, : \ \eta_i>0, \ \forall i=1,\dots,N\}$. Our analysis is carried out under the following structural property.
\begin{assumption}\label{ass:Positivity}
    The positive orthant $\mathbb{R}_{+}^{N}$ is forward invariant under the flow of \eqref{eq:ODE_system}. In particular, the exact solution corresponding to any initial condition $\bm{y}_0 \in \mathbb{R}_{+}^{N}$ satisfies
    \begin{equation*}
        \bm{y}(t) \in \mathbb{R}_{+}^{N}, \quad \text{for all } t \in [0,T],
    \end{equation*}
    so that all components remain strictly positive throughout the time interval.
\end{assumption}
Assumption~\ref{ass:Positivity}, which often reflects physical or biological interpretations of the model variables, is inherently satisfied in many applications including population dynamics, epidemiological modeling and chemical reaction networks. From a mathematical viewpoint, it can be guaranteed under verifiable conditions on $\bm{f}$. A classical sufficient condition requires that $f_i(\bm{\eta})>0$ for any $\bm{\eta}\in\mathbb{R}^N_{\geq 0}$ satisfying $\eta_i=0$, which induces an inward-pointing behavior on the boundary of the positive orthant. A concise characterization is provided by the Kamke–M{\"u}ller condition, according to which the associated flow preserves the partial order induced by the positive cone if and only if each component $f_i(\bm y)$ is non-decreasing with respect to $y_j$ for all $j\neq i$ (see, for instance, \cite{Hanouzet_1996,Smith_order_preserving_survey}). Consequently, if $\bm f(\bm 0)\ge\bm0$, positivity follows by comparison with the zero solution. Alternatively, a structural representation $f_i(\bm{y}) = y_i g_i(\bm{y})$, with $g_i$ locally Lipschitz continuous, ensures invariance of each coordinate hyperplane and preservation of positivity. Additional results for specific ODE frameworks can be found in \cite[Theorem 3.3]{Formaggia_2011} and \cite[Theorem 1.2]{Torlo_Issues}.

The numerical integration of positive dynamical systems poses additional challenges, since standard discretizations do not, in general, inherit the geometric invariance properties of the underlying continuous problem (see \cite{hairer2006geometric} and references therein). As a consequence, even high order methods may produce non-physical, non-positive approximations unless sufficiently restrictive stepsizes are enforced. Furthermore, post-processing strategies such as clipping or projection, which truncate negative components, may introduce artificial mass, degrade accuracy and adversely affect stability \cite{Blanes_2022,N_lein_2021}. These considerations motivate the development of unconditionally positivity-preserving integrators, which retain positivity independently of the time steplength. In this context, several non-standard finite difference (NSFD) methods have been proposed in the literature \cite{computation12090183,NSFD_Dajana,NSFD_JCD,NSFD_MBE,Mickens_2000}. While these schemes preserve the dynamical consistency of the underlying continuous models, their construction is inherently problem-dependent, and extensions beyond first order remain less straightforward (see, for instance, \cite{Arenas03062026,Hoang_2024,HOANG2026130029}). For the specific class of production–destruction systems, the Modified Patankar (MP) approach provides a systematic framework for constructing accurate,  unconditionally positive and conservative schemes. Building on the original formulation in \cite{BURCHARD20031}, the MP methodology has been extended to Runge–Kutta \cite{AVILA2021113350,Huang_2023,Izgin_2025}, multistep \cite{Izzo_2025}, semi-Lagrangian \cite{Cacace_2026} and deferred correction \cite{Torlo_Issues,Torlo_Offner} settings. However, its formulation relies intrinsically on the production-destruction structure and does not naturally extend to general positive systems of the form \eqref{eq:ODE_system}. Recently, high order positivity-preserving methods have been developed by combining suitable exponential or rational Volterra-type reformulations of the differential operator with predictor–corrector \cite{CdS,Replicator_Pezzella} or fully implicit direct quadrature \cite{Mixing,Axioms,Pezzella_ESAIM} discretizations. Such non-linear formulations circumvent the classical limitations of linear positivity-preserving schemes established in \cite{Bolley_1978}.

In this paper, we develop the class of \emph{Stable Positive Integral Deferred Correction} (SPIDeC) methods, which embed the Deferred Correction (DeC) paradigm \cite{Abgrall_2017, Dutt_2000, Ong_2020} within a Volterra–exponential reformulation of \eqref{eq:ODE_system}. The resulting framework can be interpreted as a discrete analogue of the classical Picard–Lindelöf theory, where the solution is obtained as the limit of successive Picard iterations applied to an equivalent integral equation. In this spirit, the method starts from a low-order approximation and refines it through successive correction sweeps. In contrast to classical DeC approaches, which act additively on the differential formulation, the proposed construction relies on a multiplicative integral representation. This structure yields unconditionally positive stable approximations of arbitrary order through explicit-in-sweep stage equations.

The paper is structured as follows. Section \ref{sec:SPIDeC} outlines the construction of SPIDeC integrators and provides additional details on the employed spectral quadrature. A comprehensive analysis of the structure-preserving properties of the proposed methods is presented in Section \ref{sec:StructurePreserving}. The high-order consistency and convergence of the SPIDeC framework are studied in Section \ref{sec:Error_Analysis}. Section \ref{sec:Stability} is devoted to an investigation of linear and non-linear stability. Numerical experiments are reported in Section \ref{sec:Numerics}, while concluding remarks and future research directions are given in Section \ref{sec:Conclusion}.

\section{The class of SPIDeC methods}\label{sec:SPIDeC}
Let $t_n = n h,$ $n=0,\dots,N_t,$ denote a uniform partition of the interval $[0,T]$ with stepsize $h>0.$ Consider, for each subinterval $[t_n,t_{n+1}]$, the set of quadrature nodes
\begin{equation}\label{eq:nodi_quad}
    t_{n,m} = t_n + \tau_m h, \qquad m=0,\dots,M \quad \text{where} \quad 0 \leq \tau_0 < \tau_1 < \dots < \tau_M = 1.
\end{equation}
Then, for each time step, each stage and each correction sweep, the general \emph{Stable Positive Integral Deferred Correction} (SPIDeC) numerical method is defined component-wise as
\begin{equation}\label{eq:SPIDeC}
    \begin{aligned}
        & y^{n,m}_{i(0)} = y_i^n
        \exp\!\left(
        h\,\tau_m \,
        \dfrac{f_i(\bm{y}^n)}{y_i^n}
        \right), 
        &&\begin{array}{l}
            i=1,\dots,N,  \\
            n \!=0,\dots,N_t, 
        \end{array}
         \\
        & y^{n,m}_{i(k)} = y_i^n
        \exp\!\left(
        h \sum_{j=0}^M Q_{m,j}\,
        \dfrac{f_i\!\left(\bm{y}^{n,j}_{(k-1)}\right)}{y^{n,j}_{i(k-1)}}
        \right), 
        &&\begin{array}{l}
            \!m\!=0,\dots,M,  \\
            k=1,\dots,N_k,
        \end{array} 
        \\
        & y^{n+1}_i=y^{n,M}_{i(N_k)}, \ \ \  \qquad \qquad \qquad \qquad \qquad \qquad \qquad \qquad && \begin{array}{l}
            n=0,\dots,N_t-1,
        \end{array} 
    \end{aligned}
\end{equation}
where $y_i^0=y_i(0)\in \mathbb{R}^+$ is given, $y_i^n\approx y_i(t_n)$ and $y_{i(k)}^{n,m}\approx y_i(t_{n,m})$ is the stage approximation at the correction sweep $k.$ When $\tau_0=0$ (e.g. Gauss--Lobatto nodes), the first stage coincides with the initial value of the time step. At sweep $k$, the stage values $\{y^{n,m}_{i(k)}\}_{m=0}^M$ depend explicitly only on those at sweep $k-1$, and are therefore mutually independent. As a consequence, all $M+1$ stages can be computed in parallel, without intra-sweep communication.  The quantities $Q_{m,j}$ denote quadrature weights associated to the nodes $\{\tau_j\}_{j=0}^M$ and will be specified later on (cf. Section \ref{subsec:Quadrature}). 

The non-linear multiplicative structure of \eqref{eq:SPIDeC} reflects an underlying exponential formulation of the differential system \eqref{eq:ODE_system}. Indeed, under the Assumption  \ref{ass:Positivity}, the exact solution satisfies the following implicit Volterra Integral Equation (VIE) 
\begin{equation}\label{eq:VIE}
    y_i(t) = y_i(t_n)
    \exp\!\left(
    \bigintsss_{\, t_n}^{t}
    \dfrac{f_i(\bm{y}(s))}{y_i(s)}\, ds\right), \qquad i=1,\dots,N, \quad n=0,\dots,N_t-1, \quad t \in [t_n,t_{n+1}],
\end{equation}
which naturally follows by integration over each subinterval. In this context, the SPIDeC methods \eqref{eq:SPIDeC} can be regarded as high order approximations of \eqref{eq:VIE}, enhanced through a deferred correction mechanism acting on the integral term. More specifically, equation \eqref{eq:SPIDeC}$_1$ corresponds to a first order prediction of the solution that is iteratively refined through the correction sweeps \eqref{eq:SPIDeC}$_2$ up to the final stage which is used for the step update \eqref{eq:SPIDeC}$_3$. 

Starting from the VIE \eqref{eq:VIE}, the formulation \eqref{eq:SPIDeC} can be rigorously derived within
the framework of Deferred Correction (DeC) methods. Following the arguments in \cite{Abgrall_2017}, the
construction relies on two operators: a low–order operator
$\bm{\mathcal L}^1$, which is inexpensive to invert, and a high–order operator $\bm{\mathcal L}^2$, which represents the desired accurate discretization. The correction sweeps are then defined by the DeC iteration
\begin{equation}\label{eq:DeC_Iteration}
    \bm{\mathcal L}^1(\bm Y_{(k)}) =\bm{\mathcal L}^1(\bm Y_{(k-1)})-\bm{\mathcal L}^2(\bm Y_{(k-1)}), \qquad k=1,\dots,N_k ,
\end{equation}
starting from an initial predictor obtained from
$\bm{\mathcal L}^1(\bm Y_{(0)})=0$. By construction, if a fixed point of \eqref{eq:DeC_Iteration} exists, it naturally satisfies the highly accurate discrete problem $\bm{\mathcal L}^2(\bm Y)=0$.

Here, the operators in \eqref{eq:DeC_Iteration} are derived from the Volterra integral equation \eqref{eq:VIE}. For each time step we collect the stage values in the matrix $\bm{Y}^{n} =
\big[ \bm{y}^{n,0},\dots,\bm{y}^{n,M} \big]
\in \mathbb{R}^{N\times(M+1)}.$ The low–order operator $\bm{\mathcal{L}}^1$ is obtained by approximating the integral in \eqref{eq:VIE} by a first–order left-rectangle rule as follows
\begin{equation}\label{eq:L1}
    \mathcal{L}_i^1(\bm{Y}^{n})=\begin{pmatrix}
        y_i^{n,0}-y_i^{n}\exp\!\left(\displaystyle\int_{t_n}^{t_{n,0}} \mathcal{E}_0^{-1}[\bm{F}^i](s) \, ds \right) \\
        \vdots \\
        y_i^{n,M}-y_i^{n}\exp\!\left(\displaystyle\int_{t_n}^{t_{n,M}} \mathcal{E}_0^{-1}[\bm{F}^i](s) \, ds \right)
    \end{pmatrix}^\top=\begin{pmatrix}
        y_i^{n,0}-y_i^{n}\exp\!\left(h\,\tau_0 \, \dfrac{f_i(\bm y^{n})}{y_i^{n}}\right) \\
        \vdots \\
        y_i^{n,M}-y_i^{n}\exp\!\left(h\,\tau_M \, \dfrac{f_i(\bm y^{n})}{y_i^{n}}\right)
    \end{pmatrix}^\top, \qquad \qquad i=1,\dots,N,
\end{equation}
where $\bm{F}^i=[f_i(\bm{y}^{n,0}),\dots,f_i(\bm{y}^{n,M})]^\top\oslash[y_i^{n,0},\dots,y_i^{n,M}]^\top\in \mathbb{R}^{M+1},$ $\oslash$ denotes the component-wise division and $\mathcal{E}_q^{-1}[\bm{F}^i]$ represents the 
polynomial of degree $q\in\mathbb{N}$ interpolating the data $\bm{F}^i$ at the  nodes $\tau_0,\dots,\tau_q$ (cf.\ Section~\ref{subsec:Quadrature}). Specifically, $\mathcal{E}_0^{-1}[\bm{F}^i]$ denotes the constant taking the value $f_i(\bm{y}^n)/y_i^n$, which corresponds to the left-rectangle approximation of the integrand evaluated at the left endpoint $t_n$ of the time interval, independently of the location of $\tau_0$.

Similarly, the high–order operator $\bm{\mathcal{L}}^2$ is devised component-wise by  
\begin{equation}\label{eq:L2}
    \mathcal{L}_i^2(\bm{Y}^{n})=\begin{pmatrix}
        y_i^{n,0}-y_i^{n}\exp\!\left(\displaystyle\int_{t_n}^{t_{n,0}} \mathcal{E}_M^{-1}[\bm{F}^i](s) \, ds \right) \\
        \vdots \\
        y_i^{n,M}-y_i^{n}\exp\!\left(\displaystyle\int_{t_n}^{t_{n,M}} \mathcal{E}_M^{-1}[\bm{F}^i](s) \, ds \right)
    \end{pmatrix}^\top=\begin{pmatrix}
        y_i^{n,0}-y_i^{n}\exp\!\left( h \sum_{j=0}^{M} Q_{0,j} \, \dfrac{f_i(\bm y^{n,j})}{y_i^{n,j}} \right) \\
        \vdots \\
        y_i^{n,M}-y_i^{n}\exp\!\left( h \sum_{j=0}^{M} Q_{M,j}\, \dfrac{f_i(\bm y^{n,j})}{y_i^{n,j}} \right)
    \end{pmatrix}^\top, \quad \quad i=1,\dots,N,
\end{equation}
where $Q_{mj}$ denote the quadrature weights defined in Section \ref{subsec:Quadrature}. Solving $\bm{\mathcal L}^1(\bm Y_{(0)}^n)=0$ directly gives the predictor stage \eqref{eq:SPIDeC}$_1.$ Substituting $\bm{\mathcal L}^1$ and $\bm{\mathcal L}^2$ into the DeC iteration \eqref{eq:DeC_Iteration} yields the correction sweeps
\eqref{eq:SPIDeC}$_2$.

The SPIDeC scheme $\bm Y_{(k)}^n =\bm Y^n_{(k-1)}-(\bm{\mathcal L}^1)^{-1}(\bm{\mathcal L}^2(\bm Y^n_{(k-1)}))$ can be therefore interpreted as a
preconditioned Picard iteration, with $\bm{\mathcal L}^1$ acting as the preconditioner, to solve the non-linear
collocation system 
\begin{equation}\label{eq:coll_syst_SenzaL}
    \log(y_i^{n,m}) =\log(y_i^{n})+h\sum_{j=0}^{M} Q_{m,j} \dfrac{f_i(\bm y^{n,j})}{y_i^{n,j}}, \qquad i=1,\dots,N, \qquad m=0,\dots,M,
\end{equation}
in close analogy with the interpretation of classical
Spectral Deferred Correction (SDC) methods (see, for instance, \cite{Dutt_2000,Ong_2020,Speck_2018}). However, in contrast to SDC integrators, where corrections are typically written in additive form, the SPIDeC scheme performs multiplicative exponential corrections. This formulation naturally preserves positivity of the numerical solution (cf. Section \ref{sec:StructurePreserving}) and inherits the order–raising mechanism characteristic of deferred correction iterations (see Section \ref{sec:Error_Analysis}). 

\begin{remark}
    For simplicity of exposition, we restrict our analysis to the autonomous case. However, the SPIDeC scheme~\eqref{eq:SPIDeC} extends straightforwardly to non-autonomous systems 
    $\bm{y}'(t)=\bm{f}(\bm{y}(t),t)$ by replacing 
    $f_i(\bm{y}^{n,j}_{(k-1)})$ with $f_i(\bm{y}^{n,j}_{(k-1)},t_{n,j})$. 
    The structure-preserving properties established in the following sections, as well as the convergence and stability properties, do not rely on the autonomy of the vector field and therefore remain valid, under analogous regularity assumptions, in the non-autonomous setting.
\end{remark}

\subsection{Spectral quadrature and discrete integration operator}\label{subsec:Quadrature} 
Here, we provide details on the construction of the quadrature  weights $Q_{m,j}$, which proceeds through polynomial interpolation  of the integrand, its exact integration via an operatorial  formulation and the stable numerical assembly of the resulting  matrix. From now on, given an integer $r,$ we denote by $\mathbb{P}_r$ the space of polynomials of degree at most $r.$

The starting point is the approximation of the integral term in \eqref{eq:VIE} for $t=t_{n,m}$. Specifically, by the straightforward change of variable $s=t_n+\tau h,$ it is expressed, for each $i=1,\dots,N,$ as follows
\begin{equation*}
    \bigintsss_{\, 0}^{\tau_m}
    \dfrac{f_i(\bm{y}(t_n+\tau h))}{y_i(t_n+\tau h)}\, d\tau, \qquad  \quad \begin{array}{l}
         n=0,\dots,N_t-1,  \\
         m=0,\dots,M 
    \end{array}
\end{equation*}
and then approximated by replacing the integrand function with the unique polynomial in $\mathbb{P}_M$ interpolating its nodal values at the points $\{\tau_j\}_{j=0}^M$. Specifically, given the nodal data
\begin{equation*}
    \bm{F}^i
    =\bigl[f_i(\bm{y}^{n,0}),\dots,f_i(\bm{y}^{n,M})\bigr]^\top
     \oslash
     \bigl[y_i^{n,0},\dots,y_i^{n,M}\bigr]^\top
    \in\mathbb{R}^{M+1},
\end{equation*}
where $\oslash$ represents the component-wise division, the interpolant reads
\begin{equation*}
    p(\tau)
    = \mathcal{E}_M^{-1}[\bm{F}^i](\tau)
    = \sum_{j=0}^M F^i_j\,\ell_j(\tau),
    \qquad \tau\in[0,1],
\end{equation*}
where $\{\ell_j\}_{j=0}^M$ is the Lagrange basis associated with $\{\tau_j\}_{j=0}^M$, and $\mathcal{E}_M^{-1}$ denotes the interpolation operator that maps nodal values to the unique element of $\mathbb{P}_M$ attaining them. Integrating $p(\tau)$ exactly over $[0,\tau_m]$ then gives
\begin{equation}\label{eq:quad_derivation}
    \int_0^{\tau_m} p(\tau)\,d\tau
    = \sum_{j=0}^M F^i_j
      \int_0^{\tau_m}\ell_j(\tau)\,d\tau,
    \qquad \ \ \  m=0,\dots,M,
\end{equation}
which defines the quadrature weights
\begin{equation}\label{eq:forma_pesi}
    Q_{m,j} = \int_0^{\tau_m}\ell_j(\tau)\,d\tau,
    \qquad m,j=0,\dots,M.
\end{equation}
Replacing the integral in~\eqref{eq:VIE} with the right-hand side of~\eqref{eq:quad_derivation} directly produces the high order operator~$\bm{\mathcal{L}}^2$ in~\eqref{eq:L2} and the correction sweeps~\eqref{eq:SPIDeC}$_2$.

The construction above admits a compact operatorial description. Define the Volterra integral operator and the nodal evaluation operator as
\begin{equation*}
    \mathcal{V}:\mathbb{P}_M\to\mathbb{P}_{M+1},
    \qquad
    (\mathcal{V}p)(\tau)=\int_0^\tau p(s)\,ds,
    \qquad\qquad
    \mathcal{N}:C([0,1])\to\mathbb{R}^{M+1},
    \qquad
    (\mathcal{N}f)_j=f(\tau_j).
\end{equation*}
The restriction $\mathcal{N}_M=\mathcal{N}|_{\mathbb{P}_M}$ is a linear isomorphism by the unisolvence of polynomial interpolation at distinct nodes~\cite[Thm.~8.1]{Quarteroni_2007}. The discrete quadrature operator is then defined by the composition
\begin{equation*}
    \mathcal{Q}
    = \mathcal{N}_{M+1}\circ\mathcal{V}\circ\mathcal{N}_M^{-1}
    : \mathbb{R}^{M+1}\to\mathbb{R}^{M+1},
\end{equation*}
which maps nodal values of a polynomial to the nodal values of its primitive. By construction, the diagram
\begin{equation}\label{Diagramma}
    \begin{CD}
        \mathbb{P}_M @>\mathcal{V}>> \mathbb{P}_{M+1} \\
        @A\mathcal{N}_M^{-1}AA @VV\mathcal{N}_{M+1}V \\
        \mathbb{R}^{M+1} @>>\mathcal{Q}> \mathbb{R}^{M+1}
    \end{CD}
\end{equation}
commutes, and the entries of $\mathcal{Q}$ coincide with the weights
in~\eqref{eq:forma_pesi}. In particular, $\mathcal{Q}$ realizes the exact action of $\mathcal{V}$ on $\mathbb{P}_M$ in nodal coordinates, that is for every
$p\in\mathbb{P}_M$,
\begin{equation*}
    \int_0^{\tau_m}p(\tau)\,d\tau
    = \sum_{j=0}^M Q_{m,j}\,p(\tau_j),
    \qquad m=0,\dots,M,
\end{equation*}
so that the quadrature rule~\eqref{eq:forma_pesi} is polynomially exact of degree $M$. In particular, applying the exactness property to $p\equiv 1\in\mathbb{P}_M$ gives $\sum_{j=0}^M Q_{m,j}=\tau_m$ for each $m=0,\dots,M.$ Moreover, for sufficiently  smooth integrands the quadrature error is of order $\mathcal{O}(h^{M+1})$, with spectral convergence for analytic functions when Chebyshev nodes are employed. The role of $M$ in the global accuracy of SPIDeC is analyzed in Section~\ref{sec:Error_Analysis}.

\begin{remark}[On the computation of the quadrature matrix $Q$]
Equation~\eqref{eq:forma_pesi} provides a theoretical characterization of the quadrature weights. However, their direct evaluation via explicit Lagrange polynomials in the monomial basis is numerically unstable for large $M$, due to the well-known ill-conditioning of this representation. To avoid this issue, the matrix $Q$ is assembled using the \texttt{Chebfun} environment~\cite{Driscoll2014ChebfunGuide}. Each Lagrange basis function $\ell_j(\tau)$ is constructed via the \texttt{chebfun.interp1} routine, which implements the barycentric interpolation formula in a numerically stable manner~\cite{BerrutTrefethen2004}. The Volterra integral operator $\mathcal{V}$ is then applied through the \texttt{cumsum} command, which computes antiderivatives spectrally on the Chebyshev representation of the interpolant~\cite{AurentzTrefethen2017}. Evaluation at the quadrature nodes $\{\tau_m\}$ then yields the columns of $Q$.

Since the quadrature nodes remain fixed throughout the simulation, the matrix $Q$ is computed once in a preprocessing step and reused at every time iteration, ensuring both accuracy and computational efficiency.
\end{remark}

\section{Structure-Preserving Properties of SPIDeC Methods}
\label{sec:StructurePreserving}
In this section, we establish the structure-preserving properties of the SPIDeC scheme~\eqref{eq:SPIDeC}. We first prove that positivity is preserved unconditionally with respect to $h$, independently of the number of correction sweeps, with all stage values strictly positive. We then show that the method exactly preserves the equilibrium set of the continuous system while maintaining high order accuracy, as established in Section~\ref{sec:Error_Analysis}. 
\begin{theorem}[Unconditional positivity]\label{thm:Positivity_Unc}
    Consider the discrete equations~\eqref{eq:SPIDeC} and assume that the
    initial values $y_i^0=y_i(0)$ are positive for each $i=1,\dots,N$.
    Then, independently of the stepsize $h>0$, the relations
    \begin{equation}\label{eq:Num_Positivity}
        y^{n,m}_{i(k)}>0, \qquad y_i^n>0, \qquad \text{for all} \quad i=1,\dots,N, \quad m=0,\dots,M, \quad k=0,\dots,N_k,
    \end{equation}
    hold for each $n=0,\dots,N_t$.
\end{theorem}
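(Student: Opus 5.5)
The argument is a nested induction: an outer induction on the time index $n$ and, for each fixed $n$, an inner induction on the sweep index $k$. Everything rests on one elementary fact. If $a>0$ and $x\in\mathbb{R}$ is finite, then $a\exp(x)>0$. Each stage in \eqref{eq:SPIDeC} has exactly this form, with $a=y_i^n$ and $x$ a finite real combination of the quotients $f_i(\cdot)/y_i^{\cdot}$. So positivity propagates as long as every quantity inside the exponentials is well defined. That in turn needs two things: the denominators are nonzero, and the arguments of $\bm f$ lie in its domain $\Omega\supset\mathbb{R}_+^N$. Both are supplied by the inductive hypothesis.

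\textbf{Outer step.} The base case $n=0$ holds by assumption, since $y_i^0>0$ for all $i$. Now assume $\bm y^n\in\mathbb{R}_+^N$ for some $n<N_t$. Then $\bm y^n\in\Omega$, so $f_i(\bm y^n)$ is a finite real number, and $y_i^n\neq 0$. Hence the predictor argument $h\tau_m f_i(\bm y^n)/y_i^n$ is finite for every $h>0$ and every $m$. The first line of \eqref{eq:SPIDeC} then gives $y^{n,m}_{i(0)}=y_i^n\exp(\cdot)>0$ for all $i,m$. This is the case $k=0$ of the inner induction.

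\textbf{Inner step.} Suppose $\bm y^{n,j}_{(k-1)}\in\mathbb{R}_+^N$ for all $j=0,\dots,M$. Each $f_i(\bm y^{n,j}_{(k-1)})$ is then finite and each denominator $y^{n,j}_{i(k-1)}$ is positive. The sum $h\sum_j Q_{m,j} f_i(\bm y^{n,j}_{(k-1)})/y^{n,j}_{i(k-1)}$ is therefore a finite real number. The signs of the weights $Q_{m,j}$ do not matter here, which is why no restriction on $h$ or on the node distribution is needed. The second line of \eqref{eq:SPIDeC} gives $y^{n,m}_{i(k)}>0$ for all $i,m$. Iterating up to $k=N_k$ and using the third line, $y_i^{n+1}=y^{n,M}_{i(N_k)}>0$, which closes the outer induction.

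\textbf{Main obstacle.} The only delicate point is well-definedness. One must make sure that $\bm f$ is never evaluated outside $\Omega$ and that no division by zero occurs. Both are excluded precisely because the previous iterate lies in the open orthant $\mathbb{R}_+^N\subset\Omega$, so the inductive hypotheses must be stated with strict positivity. No smallness condition on $h$ is used anywhere, which gives the claimed unconditional character. Note also that Assumption~\ref{ass:Positivity} is not needed: positivity comes purely from the multiplicative exponential structure.
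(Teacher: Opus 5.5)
Your proof is correct and follows the same route as the paper: each stage is $y_i^n$ times the exponential of a finite real number, so positivity propagates by induction on $n$. Your explicit inner induction on $k$ and the check that $\bm f$ is only evaluated at strictly positive vectors (so no division by zero occurs) make rigorous a well-definedness point the paper leaves implicit.
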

\begin{proof}
    The statement follows directly from the multiplicative-exponential structure of the scheme. Indeed, for any fixed $n$, both the predictor and the correction stages in~\eqref{eq:SPIDeC} define the internal values $y^{n,m}_{i(k)}$ as the product of $y_i^n$ and an exponential term, which is strictly positive for any real argument. Hence, positivity of $y_i^n$ implies positivity of all stage values and, in particular, of $y_i^{n+1}=y^{n,M}_{i(N_k)}$. The result then follows by standard induction on $n$.
\end{proof}

\subsection{Preservation of equilibria}
Here, we address the correspondence between stationary solutions of the continuous system~\eqref{eq:ODE_system} and fixed points of the time-marching scheme~\eqref{eq:SPIDeC}. We denote by
\begin{equation}\label{eq:Phi}
    \bm{\Phi}^{h,N_k} : \mathbb{R}_{+}^{N} \longrightarrow \mathbb{R}_{+}^{N}, \qquad \bm{\Phi}^{h,N_k}(\bm{y}^n) = \bigl[\Phi^{h,N_k}_1(\bm{y}^n),\dots,\Phi^{h,N_k}_N(\bm{y}^n)\bigr]^\top, \qquad \Phi_i^{h,N_k}(\bm{y}^n) = y^{n,M}_{i(N_k)}, \quad i=1,\dots,N,
\end{equation}
the numerical map induced by one step of~\eqref{eq:SPIDeC} with $M+1$ stages, $N_k$ correction sweeps and stepsize $h$. We further define the set of equilibria of~\eqref{eq:ODE_system} and the set of fixed points of the SPIDeC scheme as
\begin{equation}\label{eq:Sets}
    \mathcal{E} = \bigl\{\bm{y}^*\in\mathbb{R}_{+}^{N} \;:\; f_i(\bm{y}^*)=0, \;\;\forall\, i=1,\dots,N\bigr\} \quad \text{and} \quad \mathcal{F}^{h,N_k} =\bigl\{\bm{y}^*\in\mathbb{R}_{+}^{N} \;:\; \bm{\Phi}^{h,N_k}(\bm{y}^*)=\bm{y}^*\bigr\},
\end{equation}
respectively.

For standard Runge--Kutta methods, every equilibrium of the continuous system is a fixed point of the discrete flow, while the converse generally fails. As a result, spurious non-physical fixed points may arise as discretization artifacts~\cite{Hairer_Iserles_SanzSerna_1990,Humphries_1993}. For the SPIDeC method~\eqref{eq:SPIDeC}, the characterization of $\mathcal{F}^{h,N_k}$ depends on the number of correction sweeps $N_k$. We first establish the inclusion $\mathcal{E}\subseteq\mathcal{F}^{h,N_k}$ unconditionally in $h>0$, $M\geq 0$ and $N_k\geq 0$. We then prove that, for $N_k=0$, the converse inclusion holds as well, yielding the exact characterization $\mathcal{E}=\mathcal{F}^{h,0}$. For $N_k>0$, the analysis is carried out at the level of the underlying collocation problem, for which an analogous characterization is established under mild structural assumptions on $f$.

\begin{theorem}[Unconditional equilibria preservation]
\label{thm:EqSubsetFP}
    Let $\bm{y}^*\in\mathcal{E}$. Then $\bm{\Phi}^{h,N_k}(\bm{y}^*)=\bm{y}^*$ for every $h>0$, $M\geq 0$ and $N_k\geq 0$. Equivalently, $\mathcal{E}\subseteq\mathcal{F}^{h,N_k}$.
\end{theorem}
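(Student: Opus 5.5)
The plan is an induction on the sweep index $k$, showing that when the step starts from an equilibrium, every stage value at every sweep equals $\bm{y}^*$. Fix $\bm{y}^*\in\mathcal{E}$, any $h>0$, $M\geq 0$, $N_k\geq 0$, and set $\bm{y}^n=\bm{y}^*$ in~\eqref{eq:SPIDeC}. Since $\bm{y}^*\in\mathbb{R}_+^N$, all quotients $f_i(\cdot)/y_i$ appearing in the scheme are well defined. By Theorem~\ref{thm:Positivity_Unc}, all stage values stay in $\mathbb{R}_+^N$, so these quotients remain well defined at later sweeps too.

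\emph{Base case $k=0$.} The predictor gives
\[
y^{n,m}_{i(0)}=y_i^*\exp\!\bigl(h\tau_m f_i(\bm{y}^*)/y_i^*\bigr)=y_i^*\exp(0)=y_i^*
\]
for every $i$ and $m$, because $f_i(\bm{y}^*)=0$. Hence $\bm{y}^{n,m}_{(0)}=\bm{y}^*$ for all $m=0,\dots,M$.

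\emph{Inductive step.} Assume $\bm{y}^{n,j}_{(k-1)}=\bm{y}^*$ for all $j=0,\dots,M$. Then every term of the sum in~\eqref{eq:SPIDeC}$_2$ contains the factor $f_i(\bm{y}^*)/y_i^*=0$. The exponent therefore vanishes whatever the weights $Q_{m,j}$ are, and $y^{n,m}_{i(k)}=y_i^*$. By induction, $\bm{y}^{n,m}_{(k)}=\bm{y}^*$ for all $m$ and all $k\le N_k$. In particular $\Phi_i^{h,N_k}(\bm{y}^*)=y^{n,M}_{i(N_k)}=y_i^*$, which gives $\bm{\Phi}^{h,N_k}(\bm{y}^*)=\bm{y}^*$ and hence $\mathcal{E}\subseteq\mathcal{F}^{h,N_k}$.

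There is no real obstacle in this argument. The only points to state explicitly are that the argument never uses the values of the weights $Q_{m,j}$, the node placement (including whether $\tau_0=0$), or the size of $h$, and that the case $N_k=0$ is already covered by the base step.
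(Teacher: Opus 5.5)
Your proposal is correct and follows the paper's proof essentially step for step: both argue by induction on the sweep index $k$, use $f_i(\bm{y}^*)=0$ to make the predictor exponent vanish in the base case, and note that every correction-sweep exponent vanishes once the previous sweep's stages all equal $\bm{y}^*$, independently of $h$, $M$ and the weights $Q_{m,j}$. Your appeal to Theorem~\ref{thm:Positivity_Unc} for well-definedness is harmless but not needed, since the induction itself shows every stage equals $\bm{y}^*\in\mathbb{R}_+^N$.
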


\begin{proof}
    Let $\bm{y}^n=\bm{y}^*$. We prove by induction on $k=0,\dots,N_k$ that
    \begin{equation}\label{eq:Induct}
        y^{n,m}_{i(k)}=y_i^* \qquad \forall\, i=1,\dots,N, \quad \forall\, m=0,\dots,M.
    \end{equation}
    Regarding the base case of $k=0,$ the equilibrium condition $f_i(\bm{y}^*)=0$ gives, via~\eqref{eq:SPIDeC}$_1$,
    \begin{equation*}
        y^{n,m}_{i(0)} = y_i^*\exp\!\left(h\,\tau_m\,\frac{f_i(\bm{y}^*)}{y_i^*}\right) = y_i^*\exp(0)=y_i^*, \qquad \qquad \begin{array}{l}
             i=1,\dots,N,  \\
             m=0,\dots,M. 
        \end{array}
    \end{equation*}
    Consider now $k>0$ and suppose $y^{n,j}_{i(k-1)}=y_i^*$ for all $i=1,\dots,N$ and $j=0,\dots,M$. The correction sweep~\eqref{eq:SPIDeC}$_2$ gives
    \begin{equation*}
        y^{n,m}_{i(k)}
        = y_i^*\exp\!\left(h\sum_{j=0}^{M} Q_{m,j}\,
          \frac{f_i\!\left(\bm{y}^{n,j}_{(k-1)}\right)}{y^{n,j}_{i(k-1)}}\right)
        = y_i^*\exp\!\left(h\sum_{j=0}^{M} Q_{m,j}\,
          \frac{f_i(\bm{y}^*)}{y_i^*}\right)
        = y_i^*\exp(0)=y_i^*,
        \qquad \qquad \begin{array}{l}
             i=1,\dots,N,  \\
             m=0,\dots,M. 
        \end{array}
    \end{equation*}
    This establishes~\eqref{eq:Induct} for all $k$. Taking $m=M$ and $k=N_k$, we obtain $\Phi_i^{h,N_k}(\bm{y}^*)=y^{n,M}_{i(N_k)}=y_i^*$ for all $i$.
\end{proof}

The following result concerns the SPIDeC scheme~\eqref{eq:SPIDeC} in the absence of correction sweeps.
\begin{theorem}[Fixed-point characterization for the predictor scheme]
\label{thm:Fix_Point_Corr_0}
    Let $N_k=0$. Then, for every $h>0$ and $M\geq 0$, a vector $\bm{y}^*\in\mathbb{R}_{+}^{N}$ is a fixed point of the SPIDeC scheme~\eqref{eq:SPIDeC} if and only if it is an equilibrium of the ODE system~\eqref{eq:ODE_system}. Equivalently, $\mathcal{E}=\mathcal{F}^{h,0}$.
\end{theorem}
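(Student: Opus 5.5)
The inclusion $\mathcal{E}\subseteq\mathcal{F}^{h,0}$ is the special case $N_k=0$ of Theorem~\ref{thm:EqSubsetFP}, so only the reverse inclusion $\mathcal{F}^{h,0}\subseteq\mathcal{E}$ has to be shown. The plan is to write the one-step map explicitly for $N_k=0$ and read off the fixed-point condition componentwise.

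When $N_k=0$ there are no correction sweeps. The update \eqref{eq:SPIDeC}$_3$ therefore takes the last predictor stage, $y_i^{n+1}=y^{n,M}_{i(0)}$. Since $\tau_M=1$, formula \eqref{eq:SPIDeC}$_1$ gives
\begin{equation*}
    \Phi_i^{h,0}(\bm{y}) = y_i \exp\!\left(h\,\frac{f_i(\bm{y})}{y_i}\right), \qquad i=1,\dots,N.
\end{equation*}
This map does not depend on $M$ or on the other nodes $\tau_0,\dots,\tau_{M-1}$, which is why the statement holds for every $M\geq 0$.

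Now let $\bm{y}^*\in\mathcal{F}^{h,0}$. The fixed-point condition reads $y_i^*\exp\bigl(h f_i(\bm{y}^*)/y_i^*\bigr)=y_i^*$ for each $i$. Because $\bm{y}^*\in\mathbb{R}_+^N$, we have $y_i^*>0$, so we may divide by $y_i^*$ and obtain $\exp\bigl(h f_i(\bm{y}^*)/y_i^*\bigr)=1$. The real exponential is injective and equals $1$ only at $0$, hence $h f_i(\bm{y}^*)/y_i^*=0$. Since $h>0$ and $y_i^*>0$, this yields $f_i(\bm{y}^*)=0$ for all $i$, that is, $\bm{y}^*\in\mathcal{E}$.

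There is no real obstacle in this argument. The only points needing care are the use of $\tau_M=1$ to identify the map, and strict positivity of $\bm{y}^*$, which makes the division legitimate and the quotient $f_i/y_i$ well defined. Strict positivity holds here because the domain is $\mathbb{R}_+^N$, and it is consistent with Theorem~\ref{thm:Positivity_Unc}.
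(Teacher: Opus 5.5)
Your proposal is correct and follows the paper's proof essentially step for step. Sufficiency comes from Theorem~\ref{thm:EqSubsetFP}, and necessity comes from using $\tau_M=1$ to write $\Phi_i^{h,0}(\bm{y}^*)=y_i^*\exp\bigl(h f_i(\bm{y}^*)/y_i^*\bigr)$, then invoking $y_i^*>0$, $h>0$ and the injectivity of the exponential; your added remark that this map does not depend on $M$ or on the nodes $\tau_0,\dots,\tau_{M-1}$ neatly explains why the result holds for every $M\geq 0$.
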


\begin{proof}
    Sufficiency follows from Theorem~\ref{thm:EqSubsetFP}. For necessity, let
    $\bm{y}^*\in\mathcal{F}^{h,0}$. Since $N_k=0$ and $\tau_M=1$, the
    numerical flow reduces to the predictor step~\eqref{eq:SPIDeC}$_1$
    evaluated at the last stage
    \begin{equation*}
        \Phi_i^{h,0}(\bm{y}^*)
        = y_i^*\exp\!\left(h\,\frac{f_i(\bm{y}^*)}{y_i^*}\right),
        \qquad \quad i=1,\dots,N.
    \end{equation*}
    The fixed-point condition $\bm{\Phi}^{h,0}(\bm{y}^*)=\bm{y}^*$, together
    with $y_i^*>0$ and the injectivity of the exponential function, gives
    \begin{equation*}
        \exp\!\left(\frac{h\,f_i(\bm{y}^*)}{y_i^*}\right)=1
        \quad\iff\quad
        f_i(\bm{y}^*)=0,
        \qquad \quad \forall\, i=1,\dots,N,
    \end{equation*}
    where the last equivalence uses $h>0$ and $y_i^*>0$.
\end{proof}

In order to address the case $N_k>0$, we work at the level of the non-linear collocation system associated to the operator $\bm{\mathcal{L}}^2$ in \eqref{eq:L2}. By construction of the SPIDeC framework, a stationary point of the correction iteration~\eqref{eq:DeC_Iteration}
satisfies the high order collocation problem
$\bm{\mathcal{L}}^2(\bm{Y})=\bm{0}$.
In what follows, we assume the following global unique solvability condition.
\begin{assumption}
\label{ass:Solv}
     For each $h>0$ and each initial datum $\bm{y}^n\in\mathbb{R}_{+}^{N}$, the collocation system
    \begin{equation}\label{eq:coll_syst_Con_L}
            \bm{\mathcal{L}}^2(\bm{Y}^n)=\bm{0},   \\
        \qquad \text{where} \qquad \bm{Y}^{n} =
        \big[ \bm{y}^{n,0},\dots,\bm{y}^{n,M} \big]=\begin{bmatrix}
        y_1^{n,0} & \dots & y_1^{n,M} \\
        \vdots & \vdots  & \vdots \\
        y_N^{n,0} & \dots & y_N^{n,M} \\ 
        \end{bmatrix},
    \end{equation}
    with $\bm{y}^n$ entering as a known parameter on the 
    right-hand side of~\eqref{eq:L2}, admits a unique 
    solution $\tilde{\bm{Y}}^n\in\mathbb{R}^{N\times(M+1)}_{>0}$.
\end{assumption}

The collocation system~\eqref{eq:coll_syst_Con_L} comprises $N(M+1)$ equations for the $N(M+1)$ unknowns $y_i^{n,m}$, with $\bm{y}^n$ treated as a given parameter. When $\tau_0=0$, the first equation reduces to $y_i^{n,0}=y_i^n$ (since $Q_{0,j}=0$ for all $j$), so that the first stage is fixed and the system reduces to $NM$ non-trivial equations. When $\tau_0>0$, all $M+1$ equations remain non-trivial and the full $N(M+1)$ system characterizes the collocation solution. In both cases, expressing the system in the logarithmic variables $\lambda_i^m=\log(y_i^{n,m}/y_i^n)$ and evaluating the Jacobian at $\lambda_i^m=0$ (the trivial stage $y_i^{n,m}=y_i^n$ for all $m$) yields the identity matrix for $h=0$. By the implicit function theorem, the system has a unique solution for all $h\in[0,\hat{h})$ for some $\hat{h}>0$. Assumption~\ref{ass:Solv} requires this to hold globally for all $h>0$.

Under the Assumption~\ref{ass:Solv}, the \emph{collocation map} is well defined as
\begin{equation}\label{eq:CollocationMap}
    \bm{\Psi}^h : \mathbb{R}_{+}^{N} \to \mathbb{R}_{+}^{N},
    \qquad
    \bm{\Psi}^h(\bm{y}^n) =
    \bigl[\Psi_1^h(\bm{y}^n),\dots,\Psi_N^h(\bm{y}^n)\bigr]^\top,
    \qquad
    \Psi_i^h(\bm{y}^n)=\tilde{y}_i^{n,M},
    \quad i=1,\dots,N,
\end{equation}
where $\tilde{\bm{Y}}^n$ is the unique solution of~\eqref{eq:coll_syst_Con_L}.
By construction, $\bm{\Psi}^h$ represents the ideal limit of $\bm{\Phi}^{h,N_k}$ as $N_k\to\infty$ and, for finite $N_k$, the map $\bm{\Phi}^{h,N_k}$ approximates $\bm{\Psi}^h$ with an error governed by the convergence rate of the DeC iteration~\eqref{eq:DeC_Iteration}. We accordingly define the set of \emph{collocation fixed points} as
\begin{equation*}
    \mathcal{C}^{h}
    =\bigl\{\bm{y}^*\in\mathbb{R}_{+}^{N}
      \;:\; \bm{\Psi}^{h}(\bm{y}^*)=\bm{y}^*\bigr\}.
\end{equation*}

The following result represents the collocation counterpart of Theorem \ref{thm:EqSubsetFP}.
\begin{theorem}[Equilibria as collocation fixed points]
\label{thm:EqSubsetFPCOLL}
    Let $\bm{y}^*\in\mathcal{E}$ and suppose Assumption~\ref{ass:Solv} holds.
    Then $\bm{\Psi}^{h}(\bm{y}^*)=\bm{y}^*$ for every $h>0$. Equivalently,
    $\mathcal{E}\subseteq\mathcal{C}^{h}$.
\end{theorem}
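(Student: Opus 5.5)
The plan is to exhibit an explicit solution of the collocation system~\eqref{eq:coll_syst_Con_L} when the parameter is an equilibrium. Uniqueness in Assumption~\ref{ass:Solv} then identifies this candidate with $\tilde{\bm{Y}}^n$, and we read off the last stage. Concretely, fix $h>0$, let $\bm{y}^*\in\mathcal{E}$ and set $\bm{y}^n=\bm{y}^*$. The natural candidate is the constant stage matrix
\begin{equation*}
\bm{Y}^* = \big[\bm{y}^*,\dots,\bm{y}^*\big]\in\mathbb{R}^{N\times(M+1)}_{>0},
\end{equation*}
that is, $y_i^{n,m}=y_i^*$ for all $i=1,\dots,N$ and $m=0,\dots,M$. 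This mirrors the inductive argument of Theorem~\ref{thm:EqSubsetFP}, except that here no iteration is involved: we only need to check a single algebraic identity.

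The verification step substitutes $\bm{Y}^*$ into the $m$-th entry of $\mathcal{L}_i^2$ in~\eqref{eq:L2}. Every nodal datum becomes $F^i_j=f_i(\bm{y}^*)/y_i^*=0$, because $f_i(\bm{y}^*)=0$ and $y_i^*>0$. Hence the exponent $h\sum_{j=0}^M Q_{m,j}F^i_j$ vanishes for every $m$, independently of the weights $Q_{m,j}$ and of whether $\tau_0=0$ or $\tau_0>0$. Each entry therefore reduces to $y_i^*-y_i^*\exp(0)=0$, so $\bm{\mathcal{L}}^2(\bm{Y}^*)=\bm{0}$. The candidate also lies in the admissible set $\mathbb{R}^{N\times(M+1)}_{>0}$ required by Assumption~\ref{ass:Solv}.

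Assumption~\ref{ass:Solv} states that the collocation system with parameter $\bm{y}^*$ has exactly one positive solution. Therefore $\tilde{\bm{Y}}^n=\bm{Y}^*$, and by definition~\eqref{eq:CollocationMap} we get $\Psi_i^h(\bm{y}^*)=\tilde{y}_i^{n,M}=y_i^*$ for each $i$. Since $h>0$ was arbitrary, $\mathcal{E}\subseteq\mathcal{C}^h$.

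There is no genuine analytical obstacle. The only point needing care is the appeal to uniqueness: without it, the collocation map might select a different positive solution and the conclusion would fail. The proof should therefore state explicitly that it is the uniqueness part of Assumption~\ref{ass:Solv}, not merely existence, that forces $\tilde{\bm{Y}}^n$ to be the constant matrix $\bm{Y}^*$.
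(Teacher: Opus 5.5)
Your proposal is correct and follows the paper's proof: the constant stage matrix $\bm{Y}^*=[\bm{y}^*,\dots,\bm{y}^*]$ satisfies $\bm{\mathcal{L}}^2(\bm{Y}^*)=\bm{0}$ because every growth rate $f_i(\bm{y}^*)/y_i^*$ vanishes. The uniqueness part of Assumption~\ref{ass:Solv} then forces $\tilde{\bm{Y}}^n=\bm{Y}^*$, so $\Psi_i^h(\bm{y}^*)=y_i^*$.
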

\begin{proof}
    Let $\bm{y}^n=\bm{y}^*$. By Theorem~\ref{thm:EqSubsetFP} (applied for any
    $N_k\geq 0$), the constant stage matrix $\bm{Y}^*=[\bm{y}^*,\dots,\bm{y}^*]$
    satisfies $\bm{\mathcal{L}}^2(\bm{Y}^*)=\bm{0}$ (as $f_i(\bm{y}^*)=0$
    implies that each exponential argument in~\eqref{eq:L2} vanishes). Since $\bm{Y}^*$ satisfies the collocation equations with $\bm{y}^n=\bm{y}^*$ as the known parameter, the uniqueness
    guaranteed by Assumption~\ref{ass:Solv} implies $\tilde{\bm{Y}}^n=\bm{Y}^*$.
    Hence $\bm{\Psi}^h(\bm{y}^*)=\tilde{y}_i^{n,M}=y_i^*$ for all $i$.
\end{proof}

In what follows, a full characterization of discrete equilibria is established for the two-stages version of the SPIDeC integrator~\eqref{eq:SPIDeC}.
\begin{theorem}[Collocation fixed-point characterization for $M=1$ and $\tau_0=0$]
\label{thm:FP_Collocation_M1}
    Consider the SPIDeC scheme~\eqref{eq:SPIDeC} with $M=1$ and $\tau_0=0$. Let Assumption~\ref{ass:Solv} hold. Then, for every $h>0$, a point $\bm{y}^*\in\mathbb{R}_{+}^{N}$ is a collocation fixed point of $\bm{\Psi}^h$ if and only if it is an equilibrium of~\eqref{eq:ODE_system}. Equivalently, $\mathcal{E}=\mathcal{C}^{h}$.
\end{theorem}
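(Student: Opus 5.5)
Sufficiency (the inclusion $\mathcal{E}\subseteq\mathcal{C}^h$) is already given by Theorem~\ref{thm:EqSubsetFPCOLL}, so only necessity needs work. The plan is to write the collocation system~\eqref{eq:coll_syst_Con_L} explicitly for $M=1$, $\tau_0=0$, $\tau_1=1$, and then substitute the fixed-point condition directly.

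First I will compute the quadrature matrix from~\eqref{eq:forma_pesi}. The Lagrange basis on the nodes $\{0,1\}$ is $\ell_0(\tau)=1-\tau$ and $\ell_1(\tau)=\tau$. Hence $Q_{0,0}=Q_{0,1}=0$, since the integrals are over $[0,\tau_0]=[0,0]$, and $Q_{1,0}=Q_{1,1}=\tfrac12$; this is the trapezoidal rule. The collocation system in logarithmic form~\eqref{eq:coll_syst_SenzaL} therefore reduces, for $i=1,\dots,N$, to
\begin{equation*}
    \tilde y_i^{n,0}=y_i^n,\qquad
    \log\tilde y_i^{n,1}=\log y_i^n+\frac h2\left(\frac{f_i(\bm y^n)}{y_i^n}+\frac{f_i(\tilde{\bm y}^{n,1})}{\tilde y_i^{n,1}}\right).
\end{equation*}
Under Assumption~\ref{ass:Solv} this solution is unique and positive, so the logarithms are well defined. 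By definition~\eqref{eq:CollocationMap}, $\bm\Psi^h(\bm y^n)=\tilde{\bm y}^{n,1}$.

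Next, take $\bm y^*\in\mathcal{C}^h$ and set $\bm y^n=\bm y^*$. The fixed-point condition says $\tilde{\bm y}^{n,1}=\bm\Psi^h(\bm y^*)=\bm y^*$. Substituting this into the second equation makes the left-hand side equal to $\log y_i^*$ and both terms of the trapezoidal sum equal to $f_i(\bm y^*)/y_i^*$. This gives $0=h\,f_i(\bm y^*)/y_i^*$ for every $i$. Since $h>0$ and $y_i^*>0$, it follows that $f_i(\bm y^*)=0$, so $\bm y^*\in\mathcal{E}$.

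The argument has no real obstacle. The only point needing care is the use of Assumption~\ref{ass:Solv}: it guarantees that $\bm\Psi^h$ is well defined and that the stage $\tilde{\bm y}^{n,1}$ identified with $\bm y^*$ is the actual collocation solution. The key structural fact is that with $\tau_0=0$ the first stage is frozen at $\bm y^n$, so at a fixed point both nodal values of the trapezoidal rule coincide. This collapse is what fails for larger $M$: there the interior stages need not equal $\bm y^*$, which is why the characterization is stated only for $M=1$.
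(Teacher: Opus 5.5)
Your proof is correct and follows essentially the same route as the paper: sufficiency from Theorem~\ref{thm:EqSubsetFPCOLL}, then substituting $\tilde{\bm y}^{n,0}=\tilde{\bm y}^{n,1}=\bm y^*$ into the single nontrivial collocation equation and using $h>0$, $y_i^*>0$. The only difference is cosmetic. You compute the trapezoidal weights $Q_{1,0}=Q_{1,1}=\tfrac12$ explicitly, whereas the paper uses only the consistency identity $\sum_{j=0}^{1} Q_{1,j}=\tau_1=1$.
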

\begin{proof}
    The inclusion $\mathcal{E}\subseteq\mathcal{C}^{h}$ is given by
    Theorem~\ref{thm:EqSubsetFPCOLL}. For the converse, let
    $\bm{\Psi}^h(\bm{y}^*)=\bm{y}^*$, i.e., let the unique collocation solution
    with initial condition $\tilde{\bm{y}}^{n,0}=\bm{y}^n=\bm{y}^*$ satisfy
    $\tilde{y}_i^{n,1}=y_i^*$. Since $M=1$, the collocation system~\eqref{eq:coll_syst_Con_L} consists of a
    single equation per component 
    \begin{equation}\label{eq:Colloc_M1_gen}
        \tilde{y}_i^{n,1}
        = y_i^*\exp\!\left(h\sum_{j=0}^{1} Q_{1,j}\,
          \frac{f_i\!\left(\tilde{\bm{y}}^{n,j}\right)}
               {\tilde{y}_i^{n,j}}\right), \qquad i=1,\dots,N.
    \end{equation}
    Given $\tilde{y}_i^{n,0}=\tilde{y}_i^{n,1}=y_i^*$, both growth rates
    appearing in~\eqref{eq:Colloc_M1_gen} coincide and using $\sum_{j=0}^1 Q_{1,j}=\tau_1=1$
    (cf.\ Section~\ref{subsec:Quadrature}), equation~\eqref{eq:Colloc_M1_gen}
    reduces to
    \begin{equation*}
        y_i^* = y_i^*\exp\!\left(\frac{h\,f_i(\bm{y}^*)}{y_i^*}\right), \qquad i=1,\dots,N.
    \end{equation*}
    The positivity $y_i^*>0$, the condition $h>0$ and the injectivity of the exponential then yield $f_i(\bm{y}^*)=0$ for all $i=1,\dots,N,$ which completes the proof.
\end{proof}

For $M\geq 2$, the interior stages $\tilde{y}_i^{n,m}$ ($m=1,\dots,M-1$) are free and their growth rates enter the last collocation equation as a weighted balance. For large $h$, such contributions may cancel even when $f_i(\bm{y}^*)\neq 0$, giving rise to spurious collocation fixed points. The following theorem identifies a sufficient condition on the structure of $\bm{f}$ that rules out this scenario for all $h>0$.
\begin{theorem}[Collocation fixed-point characterization]
\label{thm:FP_Colloc_General}
    Let $M\geq 2$ and suppose Assumption~\ref{ass:Solv} holds together with:
    \begin{enumerate}[label=\normalfont(\roman*)]
        \item \label{it:weights}\textbf{(Positive quadrature weights)}
              $Q_{M,j}>0$ for all $j=0,\dots,M$;
        \item \label{it:sign}\textbf{(Sign consistency of growth rates)}
              For each $i=1,\dots,N$ and for every collocation fixed point
              $\bm{y}^*\in\mathcal{C}^h$, the growth rate
              $\bm{y}\mapsto f_i(\bm{y})/y_i$ has the same sign along the collocation trajectory, that is
              \begin{equation}\label{eq:SignCondition}
                  \frac{f_i\!\left(\tilde{\bm{y}}^{n,j}\right)}
                       {\tilde{y}_i^{n,j}}
                  \cdot
                  \frac{f_i(\bm{y}^*)}{y_i^*}
                  \;\geq\; 0,
                  \qquad j=0,\dots,M.
              \end{equation}
    \end{enumerate}
    Then, for every $h>0$, each collocation fixed point is an equilibrium of
    ~\eqref{eq:ODE_system}. Equivalently, $\mathcal{E}=\mathcal{C}^{h}$.
\end{theorem}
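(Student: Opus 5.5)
The inclusion $\mathcal{E}\subseteq\mathcal{C}^{h}$ is already provided by Theorem~\ref{thm:EqSubsetFPCOLL}, so the plan is to prove only the converse, arguing by contradiction through a sign argument on the last collocation equation. Let $\bm{y}^*\in\mathcal{C}^h$ and denote by $\tilde{\bm{Y}}^n$ the unique collocation solution (Assumption~\ref{ass:Solv}) with parameter $\bm{y}^n=\bm{y}^*$. The fixed-point property gives $\tilde{y}_i^{n,M}=y_i^*$. Taking the logarithm of the last row of~\eqref{eq:coll_syst_Con_L}, i.e.\ of~\eqref{eq:coll_syst_SenzaL} with $m=M$, and dividing by $h>0$, we obtain for each $i$
\begin{equation*}
    0=\log\!\left(\frac{\tilde{y}_i^{n,M}}{y_i^*}\right)=h\sum_{j=0}^{M} Q_{M,j}\,\frac{f_i(\tilde{\bm{y}}^{n,j})}{\tilde{y}_i^{n,j}}
    \quad\Longrightarrow\quad
    \sum_{j=0}^{M} Q_{M,j}\,\frac{f_i(\tilde{\bm{y}}^{n,j})}{\tilde{y}_i^{n,j}}=0 .
\end{equation*}
All quantities are well defined, since Assumption~\ref{ass:Solv} ensures $\tilde{\bm{Y}}^n\in\mathbb{R}^{N\times(M+1)}_{>0}$.

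Fix $i$ and suppose, for contradiction, that $g_i^*:=f_i(\bm{y}^*)/y_i^*\neq 0$. Multiply the balance identity by $g_i^*$. By hypothesis~\ref{it:sign}, each product $g_i^*\,f_i(\tilde{\bm{y}}^{n,j})/\tilde{y}_i^{n,j}$ is nonnegative. By hypothesis~\ref{it:weights}, each weight $Q_{M,j}$ is strictly positive. The sum is therefore a sum of nonnegative terms equal to zero, so every term vanishes. In particular, the term corresponding to $j=M$ vanishes: since $\tilde{\bm{y}}^{n,M}=\bm{y}^*$, it equals $Q_{M,M}(g_i^*)^2=0$. With $Q_{M,M}>0$, this forces $g_i^*=0$, a contradiction.

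Hence $f_i(\bm{y}^*)=0$ for every $i$, so $\bm{y}^*\in\mathcal{E}$. The one subtle point, which I expect to be the main (though mild) obstacle, is identifying a node at which the growth rate equals $g_i^*$ itself, so that the vanishing of terms actually yields a contradiction. The last stage provides this node via the fixed-point identity $\tilde{\bm{y}}^{n,M}=\bm{y}^*$. When $\tau_0=0$, the first stage $\tilde{\bm{y}}^{n,0}=\bm{y}^*$ (because $Q_{0,j}=0$) provides an alternative. Either way, strict positivity of the corresponding weight is what hypothesis~\ref{it:weights} supplies.
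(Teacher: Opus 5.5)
Your proof is correct and follows essentially the same route as the paper. Both arguments combine the last collocation equation with the fixed-point condition to obtain $\sum_{j=0}^{M} Q_{M,j}\, f_i(\tilde{\bm{y}}^{n,j})/\tilde{y}_i^{n,j}=0$, then use the sign condition and $Q_{M,j}>0$ to force every summand to vanish, in particular the $j=M$ term $Q_{M,M}f_i(\bm{y}^*)/y_i^*$; multiplying by $g_i^*$ and arguing by contradiction is simply a slightly more careful phrasing of the paper's ``all terms have the same sign'' step.
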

\begin{proof}
    The inclusion $\mathcal{E}\subseteq\mathcal{C}^h$ follows 
    from Theorem~\ref{thm:EqSubsetFPCOLL}. Let 
    $\bm{y}^*\in\mathcal{C}^h$, so that the collocation 
    solution with initial datum $\bm{y}^n=\bm{y}^*$ 
    satisfies $\tilde{y}_i^{n,M}=y_i^*$ (fixed-point 
    condition). The collocation equation at $m=M$ reads
    \begin{equation}\label{eq:FixedPt_Constraint}
        \sum_{j=0}^{M} Q_{M,j}\,
        \frac{f_i\!\left(\tilde{\bm{y}}^{n,j}\right)}
             {\tilde{y}_i^{n,j}}
        = 0,
        \qquad i=1,\dots,N.
    \end{equation}
    Setting $\alpha_i^j = f_i(\tilde{\bm{y}}^{n,j})/
    \tilde{y}_i^{n,j}$, the sign 
    condition~\ref{it:sign} ensures that every $\alpha_i^j$ 
    has the same sign as 
    $\alpha_i^M = f_i(\bm{y}^*)/y_i^*$, where the 
    identification $\tilde{\bm{y}}^{n,M}=\bm{y}^*$ follows 
    from the fixed-point condition. Since $Q_{M,j}>0$ by 
    assumption~\ref{it:weights}, 
    equation~\eqref{eq:FixedPt_Constraint} is a sum of 
    terms with the same sign equal to zero. Therefore each 
    summand vanishes, and in particular 
    $Q_{M,M}\,\alpha_i^M=0$. Since $Q_{M,M}>0$, we 
    conclude $\alpha_i^M = f_i(\bm{y}^*)/y_i^* = 0$, 
    and hence $f_i(\bm{y}^*)=0$ for all $i=1,\dots,N$.
\end{proof}
    Condition~\ref{it:weights} is satisfied by Gauss--Lobatto and Gauss--Legendre nodes, whose quadrature weights are strictly positive~\cite{Quarteroni_2007}. It is also satisfied by right Gauss--Radau nodes, which confer unconditional logarithmic contractivity to the SPIDeC framework (as proved in Section \ref{subsec:NonlinStab}). For these nodes, no stage is located at $\tau=0$, so the sum~\eqref{eq:FixedPt_Constraint} runs from $j=1$ to $j=M$, and condition~\ref{it:sign} is required only along the interior collocation trajectory $\{\tilde{\bm{y}}^{n,j}\}_{j=1}^M$.
    
    Condition~\ref{it:sign} reflects a non-restrictive structural property of the underlying dynamics. It is verified in several relevant settings, including Lotka--Volterra systems under dissipativity near a stable equilibrium, and compartmental models with suitable inflow and outflow terms and sign-definite per-capita rates.
    Outside these regimes, condition~\ref{it:sign} may fail for sufficiently large $h$, and spurious collocation fixed points cannot be excluded a priori.

The results of this section reveal a hierarchy across correction levels. At the predictor stage ($N_k=0$), $\mathcal{E}=\mathcal{F}^{h,0}$ for all $h>0$, so no spurious fixed points arise. As $N_k$ increases, the numerical flow $\bm{\Phi}^{h,N_k}$ converges to the collocation map $\bm{\Psi}^h$, for which $\mathcal{E}=\mathcal{C}^h$ holds under mild assumptions. Hence, correction sweeps improve the order of accuracy (Section~\ref{sec:Error_Analysis}) without altering the equilibrium structure. This decoupling distinguishes the SPIDeC framework from standard high order Runge--Kutta methods, where additional stages may introduce non-physical fixed points.

\section{Error Analysis}\label{sec:Error_Analysis}
The SPIDeC scheme \eqref{eq:SPIDeC} consists of a prediction step, a sequence of correction sweeps, and a final update. Accordingly, the local discretization error can be decomposed as
\begin{equation}\label{eq:Local_error}
    \begin{aligned}
        &\delta^{n,m}_{i(0)}(h)=y_i(t_{n,m})-y_{i(0)}(t_{n,m}), &&\text{with} \ \ y_{i(0)}(t_{n,m}):=y_i(t_n)\exp \left(h \tau_m \dfrac{f_i(\bm{y}(t_n))}{y_i(t_n)}\right), \\
        &\delta^{n,m}_{i(k)}(h)=y_i(t_{n,m})-y_{i(k)}(t_{n,m}),    &&\text{with} \ \ y_{i(k)}(t_{n,m})=y_i(t_n)\exp \left(h \sum_{j=0}^M Q_{m,j} \dfrac{f_i(\bm{y}_{(k-1)}(t_{n,j}))}{y_{i(k-1)}(t_{n,j})}\right), \quad k=1,\dots,N_k, \\
        & \delta_{i}(h; t_{n+1})=y_i(t_{n+1})-y_{i(N_k)}(t_{n+1})=\delta^{n,M}_{i(N_k)}(h),\!\!\!
    \end{aligned}
\end{equation}
for $i=1,\dots,N$ and $m=0,\dots,M$. For convenience, we also introduce the vector notation
\begin{equation}\label{eq:Vec_Local_error}
    \bm{\delta}_{(k)}^{n,m}(h) = [\delta_{1(k)}^{n,m}(h),\dots,\delta_{N(k)}^{n,m}(h)]^\top \in \mathbb{R}^N, \qquad k=0,\dots,N_k.
\end{equation}

The multiplicative--exponential structure of the scheme, which ensures unconditional positivity, requires a careful treatment of the error propagation. In the following, we show that the SPIDeC method \eqref{eq:SPIDeC} achieves arbitrarily high order of accuracy, determined by the number of quadrature nodes and correction sweeps. The consistency of the scheme is stated in the following theorem.
\begin{theorem}[SPIDeC Consistency]\label{thm:Consistency}
    Let $\bm{y}(t)=[y_1(t),\dots,y_N(t)]^\top, $ for $t\in[0,T],$ denote the solution to the differential system \eqref{eq:ODE_system} under the Assumption \ref{ass:Positivity}. Given two integers $M$ and $N_k,$ define $q=\max\{M+1, \ N_k+1\}.$ Assume that $\bm f \in C^{q}(\Omega)$, where $\Omega$ is a compact subset of $\mathbb{R}_{+}^{N}$ such that
    \begin{equation}\label{eq:def_Omega}
         \left\{\bm{\omega}=[\omega_1,\dots,\omega_N]^\top \ : \ 0<\alpha\leq \omega_i \leq \beta\right\}\subset \stackrel{\circ}{\Omega}, \qquad \text{with} \ \ \alpha=\!\min_{\substack{1\leq i \leq N \\ 0\leq t \leq T }}|y_i(t)| \ \ \text{and} \ \  \beta =\! \max_{i=1,\dots,N}\|y_i(t)\|_{L^\infty([0,T])}.
    \end{equation}
    Then, the SPIDeC method \eqref{eq:SPIDeC} with $M+1$ stages and $N_k$ correction sweeps is consistent with \eqref{eq:ODE_system} of order $p=\min\{M+1,N_k+1\}$.
\end{theorem}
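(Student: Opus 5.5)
We argue by induction on the sweep index $k$, working in logarithmic variables, and show that the stage errors defined in \eqref{eq:Local_error} satisfy
\begin{equation*}
    \bigl\|\bm{\delta}^{n,m}_{(k)}(h)\bigr\| = \mathcal{O}\bigl(h^{\min\{M+2,\,k+2\}}\bigr), \qquad m=0,\dots,M, \quad k=0,\dots,N_k .
\end{equation*}
Taking $m=M$ and $k=N_k$ then gives a local error $\delta_i(h;t_{n+1})=\mathcal{O}(h^{p+1})$, which is consistency of order $p=\min\{M+1,N_k+1\}$. Throughout we use the compact set $\Omega$ in \eqref{eq:def_Omega}. On a neighbourhood of the exact trajectory the growth rate $g_i(\bm y):=f_i(\bm y)/y_i$ is of class $C^q$ and Lipschitz, since $y_i\ge\alpha>0$ there. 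The logarithm is Lipschitz on $[\alpha,\beta]$ with constant $1/\alpha$, and the exponential is Lipschitz on bounded sets. Hence the errors in $y_i$ and in $\log y_i$ are equivalent up to constants, and it suffices to estimate log-errors.

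\textbf{Step 1 (base case $k=0$).} Write $\log y_i(t_{n,m})-\log y_i(t_n)=\int_{t_n}^{t_{n,m}} g_i(\bm y(s))\,ds$ using \eqref{eq:VIE}. Subtracting the predictor exponent $h\tau_m g_i(\bm y(t_n))$ leaves a left-rectangle error of size $\mathcal{O}(h^2)$, because $g_i\circ\bm y\in C^1$. The predicted stages therefore lie within $\mathcal{O}(h^2)$ of the trajectory. For $h$ small enough they remain in $\Omega$, where the Lipschitz bounds apply.

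\textbf{Step 2 (inductive step).} Assume the estimate holds at sweep $k-1$. Decompose
\begin{equation*}
    \log y_i(t_{n,m})-\log y_{i(k)}(t_{n,m}) = \Bigl[\int_{t_n}^{t_{n,m}} g_i(\bm y(s))\,ds - h\sum_{j} Q_{m,j}\, g_i(\bm y(t_{n,j}))\Bigr] + h\sum_j Q_{m,j}\Bigl[g_i(\bm y(t_{n,j}))-g_i(\bm y_{(k-1)}(t_{n,j}))\Bigr].
\end{equation*}
The first bracket is the interpolatory quadrature error of Section~\ref{subsec:Quadrature}. Since the rule is exact on $\mathbb{P}_M$ and the integration interval has length $\tau_m h$, this term is $\mathcal{O}(h^{M+2})$; this is where $g_i\circ\bm y\in C^{M+1}$ is needed. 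The second term is bounded by $h\,L\,\max_j|Q_{m,j}|\,(M+1)\,\|\bm\delta_{(k-1)}\|=\mathcal{O}(h^{1+\min\{M+2,k+1\}})$. Combining the two gives $\mathcal{O}(h^{\min\{M+2,k+2\}})$. Exponentiating with the Lipschitz bound transfers the estimate to $\delta^{n,m}_{i(k)}$. This estimate also keeps the corrected stages inside $\Omega$ for small $h$, so the Lipschitz constant $L$ stays uniform in $k\le N_k$.

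\textbf{Main obstacle.} The delicate point is justifying the regularity and uniformity assumptions. We need all intermediate stages to stay in a region where $y_i$ is bounded away from zero; this is why the hypothesis $\{\alpha\le\omega_i\le\beta\}\subset\stackrel{\circ}{\Omega}$ is imposed. We also need the constants to depend on $\bm f$ only through $C^q$ norms on $\Omega$ and on $1/\alpha$. We will handle this by choosing $h_0$ so that every stage, at every sweep up to $N_k$, lies within distance $\operatorname{dist}(\{\alpha\le\omega_i\le\beta\},\partial\Omega)/2$ of the trajectory. This follows inductively from Steps 1–2, since each error is $\mathcal{O}(h^2)$ with constants fixed independently of $k$. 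The smoothness $C^q$ with $q=\max\{M+1,N_k+1\}$ amply covers what these steps require: $C^{M+1}$ for the quadrature remainder and $C^1$ for the Lipschitz bounds.
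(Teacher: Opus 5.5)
Your proposal is correct and follows essentially the same route as the paper. Both arguments run an induction on the sweep index: the base case is the $\mathcal{O}(h^2)$ left-rectangle error, and the inductive step splits the exponent defect into the interpolatory quadrature error $\mathcal{O}(h^{M+2})$ plus the previous-sweep error propagated through the Lipschitz bound on $g_i$, which gains one power of $h$. Working with log-differences and then exponentiating is equivalent to the paper's direct application of the mean value theorem to the exponential, so the difference between the two write-ups is only cosmetic.
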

\begin{proof}
    We introduce, for each $i=1,\dots,N$, the functions $\varphi_i\in C^q(\Omega)$ as 
    $\varphi_i : \bm{\omega}\in \Omega \mapsto f_i(\bm{\omega})/\omega_i \in \mathbb{R}$. 
    Since the trajectory $\bm y(t)$ remains in the compact set $\Omega$ for all $t\in[0,T]$, the functions $\varphi_i$ admit uniformly bounded partial derivatives up to order $q$. For $r=0,\dots,q$, we then define $\hat{\Phi}_i^{[r]}=\max_{\bm{\omega}\in\Omega}\max_{|\gamma|=r}\left|D^\gamma \varphi_i(\bm{\omega})\right|$ and $\hat{\Phi}^{[r]}=\max_i\hat{\Phi}_i^{[r]}.$ \\
    To prove the result, we proceed by induction on $k$ showing that, independently of $n$ and $m,$
    \begin{equation}\label{eq:Induction_Objective}
        \|\bm{\delta}^{n,m}_{(k)}(h)\|_\infty=\mathcal{O}(h^{M+2})+\mathcal{O}(h^{k+2}), \qquad \text{for each} \ k=0,\dots,N_k.
    \end{equation}
    Regarding the base case $k=0$, which corresponds to the predictor stage in \eqref{eq:SPIDeC}${_1},$ we first recall that the left-endpoint quadrature rule yields
    \begin{equation*}
        \left|R^{n,m}_i\right|=\left|\int_{t_n}^{t_{n,m}}\varphi_i(\bm{y}(s)) \ ds - h\tau_m\varphi_i(\bm{y}(t_n))\right|\leq C_i^m \tau_m^2 h^2\leq \mathrm{C}_\mathrm{R} h^2, \qquad \begin{array}{l}
             i=1,\dots,N,  \\
             m=0,\dots,M, 
        \end{array}
    \end{equation*}
    with $C_i^{m}$ and $\mathrm{C}_\mathrm{R}$ positive constants independent of $h.$
    Furthermore, from \eqref{eq:VIE} and the mean value theorem, there exist some constants $\zeta_i^m\in(0,e^{h\tau_m\hat{\Phi}_i^{[0]}})$ such that
    \begin{equation*}
    \begin{split}
        \delta^{n,m}_{i(0)}(h)=&y_i(t_n)\left\{ \exp\left(\int_{t_n}^{t_{n,m}}\varphi_i(\bm{y}(s)) \ ds\right) - \exp\left(h\tau_m\varphi_i(\bm{y}(t_n))\right)   \right\}=y_i(t_n)\zeta_i^mR_i^{n,m}, \qquad  \begin{array}{l}
             i=1,\dots,N,  \\
             m=0,\dots,M 
        \end{array}
    \end{split}
    \end{equation*}
    and therefore, recalling $\tau_m\in[0,1],$ we conclude
    \begin{equation*}
        \max_{m=0,\dots,M}\|\bm{\delta}^{n,m}_{(0)}(h)\|_\infty\leq \mathrm{C}_\mathrm{R}\beta \exp{(T\hat{\Phi}^{[0]}/N_t)} \, h^2 =\mathcal{O}(h^2).
    \end{equation*}
    Consider now $k>0$ and assume that, for each stage index $m,$ $\|\bm{\delta}^{n,m}_{(k-1)}(h)\|_\infty=\mathcal{O}(h^{M+2})+\mathcal{O}(h^{k+1}),$ that is
     \begin{equation}\label{eq:Induction_Hp}
        \max_{\substack{i=1,\dots,N \\ m=0,\dots,M}}|\delta^{n,m}_{i(k-1)}(h)|\leq C_1h^{M+2}+C_2h^{k+1}, \qquad C_1,C_2\in \mathbb{R}^+.
    \end{equation}
    For sufficiently small stepsizes, given the induction hypothesis, $\bm{y}_{(k-1)}(t)\in \Omega$ for $t\in[0,T].$ From the mean value theorem and the integral formulation of the weights in \eqref{eq:forma_pesi}, there exist some constants $\xi_{i,m}\in(0,e^{h\hat{\Phi}_i^{[0]}})$ such that
    \begin{equation}\label{eq:tmp1_local_error}
        \begin{split}
            \delta^{n,m}_{i(k)}(h)&=y_i(t_n)\left\{ \exp\left(\int_{t_n}^{t_{n,m}}\varphi_i(\bm{y}(s)) \ ds\right)-\exp\left(h\sum_{j=0}^MQ_{m,j}\varphi_i(\bm{y}_{(k-1)}(t_{n,j}))\right)\right\} \\
            &=y_i(t_n)\xi_{i,m}\left\{ h\bigintss_{ \, 0}^{\tau_m} \!\!\! \left(\varphi_i(\bm{y}(t_n+sh)) -\sum_{j=0}^M\ell_j(s)\varphi_i(\bm{y}(t_{n,j})) \right)  ds  +h\sum_{j=0}^MQ_{m,j}\left(\varphi_i(\bm{y}(t_{n,j}))- \varphi_i(\bm{y}_{(k-1)}(t_{n,j}))\right)\right\}.
        \end{split}
    \end{equation}
    The $k$--th sweep local error in \eqref{eq:tmp1_local_error} arises from two distinct contributions: a quadrature error and a defect-iteration error, which are analysed in the following. Well-established results on the interpolation remainder (see, for instance \cite[Theorem 8.2]{Quarteroni_2007} and references therein) yield
    \begin{equation*}
        Err_{\mathcal{E}^{-1}_M\bm{\varphi}_i}(s):=\varphi_i(\bm y(t_n+sh))
        -
        \sum_{j=0}^M \ell_j(s)\,
        \varphi_i(\bm y(t_{n,j}))
        =\left.\dfrac{d^{M+1}\varphi_i(\bm y(t_n+\sigma h))}{d\sigma^{M+1}}
        \right|_{\sigma=\theta(s)} \
        \prod_{j=0}^M \dfrac{(s-\tau_j)}{(M+1)!}, \quad \text{with} \quad \begin{array}{l}
             s\in[0,1] \ \text{and}  \\
             \theta(s)\in(0,1),
        \end{array}
    \end{equation*}
    which, denoted by $\hat{F}=\max_{i=1,\dots,N} \|f_i\|_{L^\infty(\Omega)},$ implies
    \begin{equation}\label{eq:err_interp_integr}
        \begin{split}
           \max_{\substack{i=1,\dots,N \\ m=0,\dots,M}}\left|h\int_0^{\tau_m} Err_{\mathcal{E}^{-1}_M\bm{\varphi}_i}(s) \ ds\right|\leq h \left(h^{M+1} \  \dfrac{\hat{\Phi}^{[M+1]} \hat{F}^{M+1}}{(M+1)!}\max_{m=0,\dots,M}\prod_{j=0}^{M}\int_0^{\tau_m}|s-\tau_j| \ ds \right) \leq \mathrm{C}_{\mathcal{E}^{-1}_M} h^{M+2}=\mathcal{O}(h^{M+2}),
        \end{split}
    \end{equation}
    with $\mathrm{C}_{\mathcal{E}^{-1}_M}$ positive constant independent of the stepsize $h$ and of the step index $n.$
    Furthermore, recalling the induction hypothesis \eqref{eq:Induction_Hp}, we derive
    \begin{equation}\label{eq:lip_err}
        h\max_{\substack{i=1,\dots,N \\ m=0,\dots,M}}\left|\sum_{j=0}^MQ_{m,j}\left(\varphi_i(\bm{y}(t_{n,j}))- \varphi_i(\bm{y}_{(k-1)}(t_{n,j}))\right)\right|\leq h \hat{\Phi}^{[1]} \sum_{j=0}^M|Q_{m,j}| \max_{\substack{i=1,\dots,N \\ m=0,\dots,M}}|\delta^{n,m}_{i(k-1)}|\leq \tilde{C}_1\hat{\Phi}^{[1]}h^{M+3}+\tilde{C}_2\hat{\Phi}^{[1]}h^{k+2}.
    \end{equation}
    Therefore, by gathering \eqref{eq:lip_err} and \eqref{eq:err_interp_integr} with \eqref{eq:tmp1_local_error} we get 
    \begin{equation*}
        \max_{\substack{i=1,\dots,N \\ m=0,\dots,M}}|\delta^{n,m}_{i(k)}(h)|\leq \beta \exp{(T\hat{\Phi}^{[0]}/N_t)} \left( (\mathrm{C}_{\mathcal{E}^{-1}_M}+h\tilde{C}_1\hat{\Phi}^{[1]})h^{M+2}+\tilde{C}_2\hat{\Phi}^{[1]}h^{k+2} \right),
    \end{equation*}
    that leads to \eqref{eq:Induction_Objective}. This shows that each correction sweep increases the accuracy by one order, until the order of the underlying quadrature rule is reached. \\ Finally, from the last stage of the last sweep, the SPIDeC local truncation error \eqref{eq:Local_error}$_{3}$ satisfies
    \begin{equation*}
        \max_{i=1,\dots,N} |\delta_i(h;t_{n+1})|=\mathcal{O}(h^{M+2})+\mathcal{O}(h^{N_k+2})=\mathcal{O}(h^{p+1}),
    \end{equation*}
    which proves the $p$-th order consistency of the SPIDeC method.
\end{proof}

To investigate the convergence properties of the SPIDeC integrator \eqref{eq:SPIDeC}, we first establish some
preparatory results. The multiplicative–exponential structure of the scheme implies that each stage is a small relative perturbation of the previous iterate. By compactness and interior inclusion, this guarantees the existence of a safety margin that prevents the numerical solution from leaving an admissible compact set for sufficiently small time steps. This is proved with the following result.

\begin{lemma}
    \label{lem:SPIDeC_invariant_set}
    Let $\{\bm{y}^{n}\}$ and $\{\bm{y}_{(k)}^{n,m}\}$ denote the solutions to the discrete equations \eqref{eq:SPIDeC}, with $\bm{y}^{0}=\bm{y}(0)$. Assume that $\bm f \in C^{0}(\Omega)$, where $\Omega \subset \mathbb{R}_{+}^{N}$ is a compact set satisfying \eqref{eq:def_Omega}. Then, there exists a positive threshold $h^*$ such that, for all $0<h\leq h^*$, the SPIDeC solution satisfies
        \begin{equation*}
            \bm y^n \in \Omega, \qquad \bm y^{n,m}_{(k)} \in \Omega, \qquad  \text{for all } \ n=0,\dots,N_t, \qquad m=0,\dots,M \qquad \text{ and } \qquad k=0,\dots,N_k.
        \end{equation*}
\end{lemma}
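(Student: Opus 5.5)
The plan is to separate the \emph{intra-step} control, which follows directly from the multiplicative--exponential structure, from the \emph{inter-step} control, which needs a comparison with the exact trajectory. Let $K=\{\bm\omega:\alpha\le\omega_i\le\beta\}\subset\stackrel{\circ}{\Omega}$ be the box in \eqref{eq:def_Omega}. By compactness there is a margin $d>0$ such that the closed log-neighbourhood
\[
K_{d}=\{\bm\omega:\ \textstyle\min_{\bm\eta\in K}\max_i|\log\omega_i-\log\eta_i|\le d\}
\]
is still contained in $\Omega$. Since $f_i\in C^0(\Omega)$ and $\omega_i\ge\alpha e^{-d}>0$ on $\Omega$, the growth rates $\varphi_i(\bm\omega)=f_i(\bm\omega)/\omega_i$ are bounded by a constant $\hat\Phi^{[0]}$ on $\Omega$. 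Throughout I work in the logarithmic variables $\log y_i$, in which every stage of \eqref{eq:SPIDeC} is an additive perturbation of $\log y_i^n$.

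\textbf{Intra-step step.} Suppose $\bm y^n\in K_{d/2}$. The predictor gives $|\log y^{n,m}_{i(0)}-\log y_i^n|\le h\tau_m\hat\Phi^{[0]}\le h\hat\Phi^{[0]}$. Now assume all stages of sweep $k-1$ lie in $\Omega$. Then the $k$-th sweep satisfies
\[
|\log y^{n,m}_{i(k)}-\log y_i^n|\le h\,\hat\Phi^{[0]}\max_m\textstyle\sum_j|Q_{m,j}|=:h\,C_Q\hat\Phi^{[0]} .
\]
This bound is independent of $k$, because each sweep is anchored at $y_i^n$ rather than at the previous sweep. Hence, if $h\,C_Q\hat\Phi^{[0]}\le d/2$, an induction on $k=0,\dots,N_k$ keeps every stage in $K_d\subset\Omega$.

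\textbf{Inter-step step.} The intra-step bound alone does not prevent the drift from accumulating over $N_t=T/h$ steps, so I would run an induction on $n$ with the stronger hypothesis $\max_i|\log y_i^n-\log y_i(t_n)|\le d/2$, which implies $\bm y^n\in K_{d/2}$. I will argue in log variables as in a standard convergence proof. The one-step map in log coordinates is $\log\bm y^n+h\,\bm G_h(\log\bm y^n)$, where $\bm G_h$ is built from compositions of the $\varphi_i$ with the stage maps. Provided $\varphi_i$ is Lipschitz on $\Omega$ (for instance $\bm f\in C^1$, as in Theorem~\ref{thm:Consistency}), $\bm G_h$ is Lipschitz with a constant $L$ independent of $h$ on the set where the intra-step step keeps the stages in $\Omega$. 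The local defect of the exact solution is $\mathcal O(h^{p+1})$, in log form, by Theorem~\ref{thm:Consistency}. A discrete Gronwall inequality then gives
\[
\max_i|\log y_i^n-\log y_i(t_n)|\le C\,e^{LT}h^{p},
\]
which is at most $d/2$ once $h$ is small enough. Taking $h^*$ as the minimum of this threshold and $d/(2C_Q\hat\Phi^{[0]})$ closes both inductions.

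\textbf{Main obstacle.} The hypothesis $\bm f\in C^0$ in the statement is too weak for the Gronwall step, since mere continuity gives no Lipschitz propagation of the error. I would first try to replace Lipschitz continuity by uniform continuity of $\varphi_i$ on the compact $\Omega$. This yields a per-step log-error growth of $h\,\omega(\varepsilon)$, where $\omega$ is the modulus of continuity, plus the local defect $h\,o(1)$. Closing that estimate requires an Euler-type compactness argument, via an equicontinuity or Peano-type limit combined with uniqueness of the exact solution, rather than an explicit rate. If that fails, I would state and use the $C^1$ (local Lipschitz) hypothesis already assumed in Theorem~\ref{thm:Consistency}, which makes the argument above rigorous.
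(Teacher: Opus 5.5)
\paragraph{Comparison with the paper's proof.}
Your intra-step argument is the whole of the paper's proof. The paper fixes nested boxes $\Omega_\delta\subset\Omega_{2\delta}\subset\Omega$, bounds the growth rates by continuity, and uses the fact that every sweep is anchored at $y_i^n$ to show that $\bm y^n\in\Omega_\delta$ places all stages and $\bm y^{n+1}$ in $\Omega_{2\delta}$. It then declares the induction on $n$ complete. However, $\bm y^{n+1}\in\Omega_{2\delta}$ does not reproduce the hypothesis $\bm y^{n+1}\in\Omega_\delta$. Chaining the one-step factors $e^{\pm hC_Q\Phi_\delta}$ over $T/h$ steps only yields $e^{\pm TC_Q\Phi_\delta}$, which is not close to $1$.

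Your diagnosis that drift can accumulate is therefore correct. Your inter-step induction supplies exactly what the paper's argument lacks: it carries $\max_i|\log y_i^n-\log y_i(t_n)|\le d/2$ and closes it with the local defect and a discrete Gronwall bound. In effect you prove the lemma jointly with Theorem~\ref{thm:Convergence_SPIDeC} (the usual tube argument), rather than as an independent prerequisite for it. The cost is a threshold $h^*$ that depends on $T$ through $e^{LT}$ and on a Lipschitz constant of $\varphi_i$. The gain is an argument that actually closes.

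Two small simplifications are available. First, your $K_d$ is just the box $[\alpha e^{-d},\beta e^{d}]^N$, which is convex, so $\varphi_i\in C^1$ near it gives the Lipschitz bound directly. Second, the Gronwall step only needs a local defect of order $h^2$, which already holds under $C^1$. You do not need the full order $p$ of Theorem~\ref{thm:Consistency}.

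\paragraph{The $C^0$ hypothesis.}
You are right that $C^0$ is too weak. Your compactness fallback works only if you add uniqueness of the exact solution, which continuity does not provide. Without uniqueness the statement is false as written.

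Take $N=1$, $f(y)=\sqrt{|y-1|}$, $y^0=1/2$, $T=10$, and let $\bm y(t)$ be the solution that reaches $1$ at $t=\sqrt2$ and stays there. Then $\alpha=1/2$, $\beta=1$, and $\Omega=[1/4,2]$ is admissible. Already for $M=N_k=0$ the scheme gives $y^{n+1}=y^n\exp(hf(y^n)/y^n)\ge y^n+h\sqrt{|y^n-1|}$. Hence, for generic small $h$, the iterates overshoot $1$ near $t=\sqrt2$. Since $f>0$ above $1$, they then keep increasing, reaching values of order $(T-\sqrt2)^2/4$ and leaving $\Omega$.

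So replacing $C^0$ by $C^1$, or adding uniqueness, is necessary rather than a convenience. $C^1$ is already covered by the assumptions of Theorems~\ref{thm:Consistency} and~\ref{thm:Convergence_SPIDeC}, which is where the lemma is used. Under that hypothesis your proof is correct.
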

    
\begin{proof}
    By construction of $\Omega$, there exists $\delta\in(0,\alpha/2)$ such that
    \begin{equation*}
        \Omega_{\delta}=\{\bm{\omega}:\alpha-\delta\leq \omega_i \leq \beta+\delta\}\subset\Omega_{2\delta}=\{\bm{\omega}\in\mathbb{R}_{+}^{N}:\alpha-2\delta\leq \omega_i \leq \beta+2\delta\}\subset \Omega.
    \end{equation*}
    
    Since $\bm f$ is continuous on $\Omega$, it is bounded on $\Omega_{2\delta}$. Hence
    \begin{equation*}
        \max_{\bm{\omega}\in\Omega_{2\delta}}\left| \frac{f_i(\bm\omega)}{\omega_i} \right|\leq \Phi_\delta = \max_{i=1,\dots,N}\frac{\|f_i\|_{L^\infty(\Omega)}}{\alpha-2\delta}, \qquad \max_{\bm{\omega}\in\Omega_{2\delta}}\left| \sum_{j=0}^M Q_{m,j}\frac{f_i(\bm{\omega})}{\omega_i} \right|\leq C_Q \Phi_\delta,\qquad C_Q = \max_{m=0,\dots,M}\sum_{j=0}^M |Q_{m,j}|\geq 1,
    \end{equation*}
    uniformly with respect to $m=0,\dots,M.$ Furthermore, denoted by
    \begin{equation*}
         h^* =\dfrac{1}{C_Q \Phi_\delta}\min\left\{\log\!\left(\dfrac{\alpha-\delta}{\alpha-2\delta}\right),\;\log\!\left(\dfrac{\beta+2\delta}{\beta+\delta}\right)\right\},
    \end{equation*}
    the inequalities $\exp(-h\Phi_\delta)\geq (\alpha-2 \delta)/(\alpha-\delta)$ and $\exp(h\Phi_\delta)\leq (\beta+2\delta)/(\beta+\delta)$ hold true for each $h\in(0,h^*].$ 
    We preliminarily observe that, by construction and recalling \eqref{eq:def_Omega}, we have $\bm y^0 = \bm y(0) \in \Omega_\delta.$ We now prove by induction on $n \ge 0$ that, if $h < h^*$, then $\bm y^{n} \in \Omega_\delta$ implies $\bm y^{n+1} \in \Omega.$ From the equation \eqref{eq:SPIDeC}$_1$, it follows that
    \begin{equation*}
        \alpha -2\delta \leq (\alpha-\delta) e^{-h\Phi_\delta} \leq y_i^n e^{-h\Phi_\delta} \leq y^{n,m}_{i(0)} \leq y_i^n e^{h\Phi_\delta}\leq (\beta+\delta) e^{h\Phi_\delta} \leq \beta +2\delta, \qquad i=1,\dots,N, \qquad m=0,\dots,M,
    \end{equation*}
    and therefore each predictor stage vector $\bm{y}_{(0)}^{n,m}$ belongs to $\Omega_{2\delta}.$ Consider $k>0$ and assume that $\bm y^{n,j}_{(k-1)} \in \Omega_{2\delta}$ for all $j=0,\dots,M$. Then, independently of $0<h\leq h^*$, the correction step \eqref{eq:SPIDeC}$_2$ yields
    \begin{equation*}
         \alpha -2\delta \leq(\alpha-\delta) e^{-h C_Q \Phi_\delta}\leq y_i^n e^{-h C_Q \Phi_\delta} \leq y^{n,m}_{i(k)} \leq y_i^n e^{h C_Q \Phi_\delta} \leq (\beta+\delta) e^{h C_Q \Phi_\delta}\leq \beta +2\delta, \qquad i=1,\dots,N, \qquad m=0,\dots,M.
    \end{equation*}
    As a consequence, for $0<h\leq h^*$, all  $\bm{y}^{n,m}_{(k)}$ belong to $\Omega_{2\delta}$ for every $k=0,\dots,N_k$ and $m=0,\dots,M$. Since $\bm y^{n+1} = \bm y^{n,M}_{(N_k)}$, it follows that $\bm y^{n+1} \in \Omega_{2\delta}\subset \Omega,$ which yields the result.
\end{proof}

\begin{lemma}
\label{lem:component-wise_lipschitz}
Let $\Omega \subset \mathbb{R}_{+}^{N}$ be a compact set and assume that $\varphi_i :\Omega\to \mathbb{R}$ is continuously differentiable for each $i=1,\dots,N$. Given $h\in \mathbb{R}^+$ and $q_0,\dots,q_M \in \mathbb{R}$ define, for $\bm{x}\in\Omega$ and $X = [\bm{x}^{(1)},\bm{x}^{(2)},\dots,\bm{x}^{(M+2)}] \in \Omega^{M+2},$
\begin{equation}\label{eq:Psi_lem}
    \psi_i(\bm{x}) = x_i \exp\!\big(h\,\varphi_i(\bm{x})\big) \qquad \text{and} \qquad \Psi_i(X) = x^{(1)}_i \exp\!\left(h\sum_{j=0}^M q_j \varphi_i\big(\bm{x}^{(j+2)}\big)\right), \qquad i=1,\dots,N.
\end{equation}
Then, for each $i=1,\dots,N$, the functions $\psi_i$ and $\Psi_i$ are Lipschitz continuous on $\Omega$ and $\Omega^{M+2}$, respectively.
\end{lemma}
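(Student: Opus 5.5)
The plan is to show that $\psi_i$ and $\Psi_i$ are continuously differentiable on a neighbourhood of their compact domains, so that their gradients are uniformly bounded. A mean value argument then gives the Lipschitz bounds. Because $\Omega$ need not be convex, I will not rely on the mean value theorem along segments. Instead I will use the elementary estimates for products and compositions of bounded Lipschitz functions directly.

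\textbf{Ingredients.} First, each $\varphi_i$ is $C^1$ on the compact set $\Omega$. It is therefore bounded, $|\varphi_i|\le \hat\Phi^{[0]}$. It is also Lipschitz on $\Omega$, with constant $L_\varphi$. If $\Omega$ is convex (for instance a box as in Lemma~\ref{lem:SPIDeC_invariant_set}), $L_\varphi$ comes from the gradient bound $\hat\Phi^{[1]}$. In general, continuous differentiability on an open neighbourhood of a compact set gives local Lipschitz bounds, and a standard compactness argument upgrades them to a global one; I would state this fact explicitly. Second, the coordinate map $\bm x\mapsto x_i$ is $1$-Lipschitz and bounded by $\beta_\Omega=\max_{\Omega}\|\bm x\|_\infty$. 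Third, $\exp$ is Lipschitz on bounded intervals: on $[-a,a]$ its constant is $e^{a}$.

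\textbf{Bound for $\psi_i$.} Write
\[
\psi_i(\bm x)-\psi_i(\bm z)=(x_i-z_i)\,e^{h\varphi_i(\bm x)}+z_i\bigl(e^{h\varphi_i(\bm x)}-e^{h\varphi_i(\bm z)}\bigr).
\]
The first term is bounded by $e^{h\hat\Phi^{[0]}}\|\bm x-\bm z\|_\infty$. For the second, the exponents lie in $[-h\hat\Phi^{[0]},h\hat\Phi^{[0]}]$, so it is bounded by $\beta_\Omega\, e^{h\hat\Phi^{[0]}}\, h L_\varphi\|\bm x-\bm z\|_\infty$. This yields the Lipschitz constant $e^{h\hat\Phi^{[0]}}(1+\beta_\Omega hL_\varphi)$.

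\textbf{Bound for $\Psi_i$.} I apply the same splitting, with the exponent $E(X)=h\sum_j q_j\varphi_i(\bm x^{(j+2)})$. The exponent satisfies $|E|\le h C_q\hat\Phi^{[0]}$, where $C_q=\sum_j|q_j|$. It is Lipschitz on $\Omega^{M+2}$, for the max-norm over blocks, with constant $hC_qL_\varphi$. The resulting constant is $e^{hC_q\hat\Phi^{[0]}}(1+\beta_\Omega hC_qL_\varphi)$.

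The only delicate point is the global Lipschitz property of $\varphi_i$ on a possibly non-convex compact set. I would handle it by contradiction. Suppose there are sequences $\bm x_k,\bm z_k$ with difference quotients tending to infinity. Boundedness of $\varphi_i$ forces $\|\bm x_k-\bm z_k\|\to0$. Extracting convergent subsequences, both sequences approach a common point, and there the local $C^1$ (hence local Lipschitz) bound gives a contradiction. Everything else is a routine estimate.
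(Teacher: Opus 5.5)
Your proposal is correct and follows essentially the same route as the paper: the same splitting $(x_i-z_i)e^{h\varphi_i(\bm x)}+z_i\bigl(e^{h\varphi_i(\bm x)}-e^{h\varphi_i(\bm z)}\bigr)$ with a mean-value bound on the exponential, and the analogous estimate for $\Psi_i$ carrying the factor $\sum_j|q_j|$. You go slightly further than the paper in two places. You justify by compactness, assuming $\varphi_i$ is $C^1$ on an open neighbourhood of $\Omega$, the global Lipschitz property of $\varphi_i$ that the paper simply asserts. Your constant for $\Psi_i$ also correctly contains $e^{hC_q\hat\Phi^{[0]}}$, where the paper writes $e^{h\|\varphi_j\|_\infty}$.
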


\begin{proof}
Since $\Omega$ is compact and $\varphi_i \in C^1(\Omega)$, it follows that $\varphi_i$ is Lipschitz continuous and bounded on $\Omega$. Hence, there exists $L_i > 0$ such that $
|\varphi_i(\bm{x}) - \varphi_i(\bm{y})| \leq L_i \|\bm{x} - \bm{y}\|,\  \forall \bm{x},\bm{y} \in \Omega.$  A straightforward application of the mean value theorem to the exponential then yields
\begin{equation*}
    \begin{aligned}
        |\psi_i(\bm{x}) - \psi_i(\bm{y})|
        &= \left| x_i e^{h \varphi_i(\bm{x})} - y_i e^{h \varphi_i(\bm{y})} \right| \leq |x_i - y_i| e^{h \|\varphi_i\|_{\infty}}
        + |y_i| \left| e^{h \varphi_i(\bm{x})} - e^{h \varphi_i(\bm{y})} \right| \leq L_\psi \|\bm{x} - \bm{y}\|, \quad \forall \bm{x},\bm{y} \in \Omega, 
    \end{aligned}
\end{equation*}
where $L_\psi=\max_{j=1,\dots,N}\left( 1+ h R L_j    \right) e^{h \|\varphi_j\|_{\infty}}=1+\mathcal{O}(h),$  $R=\max_{\bm{\omega}\in\Omega}\|\bm{\omega}\|$ and $\|\varphi_j\|_{\infty}=\max_{\bm{\omega}\in\Omega}|\varphi_j(\bm{\omega})|.$ \\
Analogously, for all $X,Y \in \Omega^{M+2},$
\begin{equation*}
    \left|h\sum_{j=0}^M q_j \varphi_i\left(\bm{x}^{(j+2)}\right)-h\sum_{j=0}^M q_j \varphi_i\left(\bm{y}^{(j+2)}\right) \right|\leq h L_i \Theta \|X-Y\|, \qquad i=1,\dots,N,
\end{equation*}
where $\Theta=\sum_{j=0}^M|q_j|.$ Therefore, proceeding as in the first part of the proof, we have
\begin{equation*}
    |\Psi_i(X) - \Psi_i(Y)| \leq |x_i^{(1)} - y_i^{(1)}| e^{h \Theta \|\varphi_i\|_\infty}
+ |y_i^{(1)}| \left| e^{h\sum_{j=0}^M q_j \varphi_i\left(\bm{x}^{(j+2)}\right)} - e^{h\sum_{j=0}^M q_j \varphi_i\left(\bm{y}^{(j+2)}\right)} \right|\leq L_\Psi \|X-Y\|,
\end{equation*}
for any matrix norm compatible with the Euclidean norm, where $L_\Psi=\max_{j=1,\dots,N}\left( 1+ h \Theta R L_j    \right) e^{h \|\varphi_j\|_{\infty}}=1+\mathcal{O}(h),$  $R=\max_{\bm{\omega}\in\Omega}\|\bm{\omega}\|$ and $\|\varphi_j\|_{\infty}=\max_{\bm{\omega}\in\Omega}|\varphi_j(\bm{\omega})|.$
\end{proof}

The iterative structure of the SPIDeC method \eqref{eq:SPIDeC} leads to a natural decomposition of the global discretization error into stage-wise and step-update contributions, defined as
\begin{equation*}
    \begin{aligned}
        &\bm{e}_{(k)}^{n,m}(h)=[e_{1(k)}^{n,m}(h),\dots,e_{N(k)}^{n,m}(h)]^\top=\bm{y}(t_{n,m})-\bm{y}_{(k)}^{n,m}, \qquad &&m=0,\dots,M,  \\
        &\bm{e}(h;t_n)=\bm{e}_{(N_k)}^{n,M}(h), \qquad &&n=0,\dots,N_t.
    \end{aligned}
\end{equation*}
The following theorem establishes the high order convergence of the unconditional positive approximations of the class \eqref{eq:SPIDeC}.

\begin{theorem}[Convergence of the SPIDeC scheme]\label{thm:Convergence_SPIDeC}
Let $\bm{y}(t)$ be the continuous solution of \eqref{eq:ODE_system} for $t \in [0,T]$ under the positivity Assumption \ref{ass:Positivity}. For a given $N_t \in \mathbb{N}$, let $\{\bm{y}^n\}_{n=0}^{N_t}$ denote the numerical approximation obtained by the SPIDeC scheme \eqref{eq:SPIDeC} with $M+1$ stages, $N_k$ correction sweeps and a uniform stepsize $h = T/N_t$. Assume that $\bm{f} \in C^q(\Omega)$, where $q = \max\{M+1, N_k+1\}$ and $\Omega \subset \mathbb{R}^N_+$ is a compact set satisfying \eqref{eq:def_Omega}.
Then, the global approximation error satisfies
\begin{equation*}
    \max_{0 \leq n \leq N_t} \|\bm{e}(h; t_n)\| = \max_{0 \leq n \leq N_t} \|\bm{y}(t_n) - \bm{y}^n\| = \mathcal{O}(h^p),
\end{equation*}
where $p = \min\{M+1, N_k+1\}$. Consequently, the SPIDeC method \eqref{eq:SPIDeC} is convergent of order $p$.
\end{theorem}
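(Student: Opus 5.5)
The plan is the classical Lady Windermere's fan argument: combine the local consistency of Theorem~\ref{thm:Consistency} with a discrete stability estimate derived from Lemma~\ref{lem:component-wise_lipschitz}, on the compact set provided by Lemma~\ref{lem:SPIDeC_invariant_set}. First I fix $h\le h^*$, with $h^*$ from Lemma~\ref{lem:SPIDeC_invariant_set}. Then all iterates $\bm y^n$ and all stages $\bm y^{n,m}_{(k)}$ lie in $\Omega$, as does the exact trajectory. Hence the growth rates $\varphi_i(\bm\omega)=f_i(\bm\omega)/\omega_i$ are $C^1$ and bounded on $\Omega$ with uniform constants. This is what makes the Lipschitz bounds legitimate, and it is the main reason the invariant-set lemma was proved first.

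Next I write one SPIDeC step as a composition of maps of the type in Lemma~\ref{lem:component-wise_lipschitz}. The predictor stages are $\psi$-type maps with $h$ replaced by $h\tau_m$. Each correction stage is $\Psi_i(\bm y^n,\bm y^{n,0}_{(k-1)},\dots,\bm y^{n,M}_{(k-1)})$ with $q_j=Q_{m,j}$. I then denote by $\hat{\bm y}^{n,m}_{(k)}$ the stages produced by the scheme started from the exact value $\bm y(t_n)$; these are the quantities whose deviation from $\bm y(t_{n,m})$ is $\bm\delta^{n,m}_{(k)}$. Comparing the scheme started from $\bm y^n$ with the one started from $\bm y(t_n)$, I prove by induction on $k$ that
\begin{equation*}
\max_m\|\bm y^{n,m}_{(k)}-\hat{\bm y}^{n,m}_{(k)}\|\le (1+C_kh)\,\|\bm y^n-\bm y(t_n)\|.
\end{equation*}
This relies on the fine structure of $\Psi_i$: the first argument enters with a factor $e^{\mathcal O(h)}$, while the stage arguments enter only through the exponent multiplied by $h$. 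So a perturbation of size $\epsilon$ in the stages contributes $\mathcal O(h\epsilon)$, and the propagated amplification stays $1+\mathcal O(h)$ rather than growing like $L_\Psi^k$ with $L_\Psi$ not close to one. This is the delicate point. The stated Lipschitz constant $L_\Psi=1+\mathcal O(h)$ measures the whole $X$ jointly, so I would redo the splitting from the proof of Lemma~\ref{lem:component-wise_lipschitz} explicitly, separating $|x^{(1)}_i-y^{(1)}_i|$ from the exponent difference. After $N_k$ sweeps this gives a one-step stability constant $L=1+\bar C h$ with $\bar C$ independent of $n$ and $h$.

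The recursion then closes as follows:
\begin{equation*}
\|\bm e(h;t_{n+1})\|\le \|\bm y(t_{n+1})-\hat{\bm y}^{n,M}_{(N_k)}\|+\|\hat{\bm y}^{n,M}_{(N_k)}-\bm y^{n+1}\|\le C h^{p+1}+(1+\bar C h)\|\bm e(h;t_n)\|.
\end{equation*}
The first term is bounded by Theorem~\ref{thm:Consistency}, whose constants are uniform in $n$ because they involve only $\hat\Phi^{[r]}$, $\beta$, and the quadrature constants. With $\bm e(h;t_0)=\bm 0$, the discrete Gronwall inequality gives
\begin{equation*}
\|\bm e(h;t_n)\|\le C h^{p+1}\sum_{\ell=0}^{n-1}(1+\bar Ch)^\ell\le \frac{C}{\bar C}\bigl(e^{\bar C T}-1\bigr)h^p,
\end{equation*}
uniformly for $0\le n\le N_t$. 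For $h>h^*$ there are only finitely many values of $N_t$, so the $\mathcal O(h^p)$ statement holds trivially after enlarging the constant.

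The main obstacle is the stability step. Specifically, I must check that the starting value enters the stage-error bound multiplicatively with constant $1+\mathcal O(h)$, uniformly over $N_k$ sweeps, and that the Lipschitz constants are controlled on $\Omega$ without any circular use of the error bound. The latter is guaranteed here because Lemma~\ref{lem:SPIDeC_invariant_set} confines both trajectories independently of the error estimate.
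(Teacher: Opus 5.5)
Your proof is correct. It uses the same ingredients as the paper: Lemma~\ref{lem:SPIDeC_invariant_set}, the Lipschitz bounds of Lemma~\ref{lem:component-wise_lipschitz}, Theorem~\ref{thm:Consistency} and discrete Gronwall. What differs is how you organize them.

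The paper works with stage-wise global errors. It writes $\bm e^{n,m}_{(k)}=\bm\delta^{n,m}_{(k)}+\bigl(\Psi^m_i(Y_{(k)}(t_n))-\Psi^m_i(Y^n_{(k)})\bigr)$ and bounds the bracket with the joint Lipschitz constant. This gives \eqref{eq:tmp_clean}, where $(1+C_kh)$ multiplies $\max_m\|\bm e^{n,m}_{(k-1)}\|_\infty+\|\bm e(h;t_n)\|_\infty$. It then iterates in $k$ and invokes $\|\bm\delta^{n,m}_{(k)}\|_\infty\le C_\delta h^{p+1}$ for every $k$.

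You instead compare two runs of the same step, started from $\bm y^n$ and from $\bm y(t_n)$, so only the final local error $\bm\delta^{n,M}_{(N_k)}$ enters. You also use the split bound, in which stage perturbations carry a factor $h$.

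This is more than cosmetic. Taken literally, \eqref{eq:tmp_clean} gives a coefficient $a_k$ in front of $\|\bm e(h;t_n)\|_\infty$ with $a_k=(1+C_kh)(a_{k-1}+1)\approx k+1$, not $\prod_i(1+C_ih)$. Moreover, Theorem~\ref{thm:Consistency} only gives $\|\bm\delta^{n,m}_{(k)}\|_\infty=\mathcal O(h^{\min\{M,k\}+2})$ for intermediate sweeps (e.g.\ $\mathcal O(h^2)$ for the predictor). In the iterated bound these terms carry coefficients of order one. Both defects disappear because of the $\mathcal O(h)$ factor on the stage arguments, which you single out as the delicate point. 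With that factor the paper's stage-wise recursion can also be repaired; its only extra payoff is a global error bound for each intermediate sweep.

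One caveat: you claim confinement needs no use of the error bound, and that is too optimistic. The proof of Lemma~\ref{lem:SPIDeC_invariant_set} only shows that $\bm y^n\in\Omega_\delta$ places all stages and $\bm y^{n+1}$ in $\Omega_{2\delta}$. That does not restart the induction. A relative change of size $e^{\pm Ch}$ per step, accumulated over $T/h$ steps, cannot by itself keep the iterates within a prescribed margin.

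The standard remedy is a bootstrap. Induct on $n$, assuming $\|\bm e(h;t_\ell)\|_\infty\le\delta$ for $\ell\le n$, so that $\bm y^n\in\Omega_\delta$. The one-step part of the lemma then puts both the numerical and the exact-started stages in $\Omega_{2\delta}$. Your recursion closes the induction: it gives $\|\bm e(h;t_{n+1})\|_\infty\le Kh^p\le\delta$ for $h$ small, with $K$ uniform in $n$. The paper's proof leans on the lemma in the same way, so this does not distinguish the two routes.
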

\begin{proof}
We first observe that, by Lemma \ref{lem:SPIDeC_invariant_set}, the stepsize $h$ can be chosen sufficiently small so that both the continuous and numerical solutions remain in $\Omega$. 
Let us introduce the auxiliary functions $\varphi_i \in C^q(\Omega)$, for $i=1,\dots,N$, defined by $\varphi_i(\bm{\omega})=f_i(\bm{\omega})/\omega_i.$ 
Following the notation in \eqref{eq:Psi_lem}, we define the mappings $\psi_i^m:\Omega\to\mathbb{R}^+$ and $\Psi_i^m:\Omega^{M+2}\to\mathbb{R}^+$ as
\begin{equation*}
    \psi_i^m(\bm{x}) = x_i \exp\!\big(h\tau_m\,\varphi_i(\bm{x})\big) \qquad \text{and} \qquad 
    \Psi_i^m(X) = x^{(1)}_i \exp\!\left(h\sum_{j=0}^M Q_{m,j} \varphi_i\big(\bm{x}^{(j+2)}\big)\right).
\end{equation*}
By construction of the predictor step, the global discretization error satisfies
\begin{equation*}
    e_{i(0)}^{n,m}(h)=y_i(t_{n,m})-y_{i(0)}^{n,m}
    =\delta_{i(0)}^{n,m}(h)+(\psi_i^m(\bm{y}(t_n))-\psi_i^m(\bm{y}^n)), \qquad m=0,\dots,M.
\end{equation*}
From the Lipschitz continuity of Lemma \ref{lem:component-wise_lipschitz}, it follows that
\begin{equation}\label{eq:Err_PRedictor}
    \max_{m=0,\dots,M}\|\bm{e}_{(0)}^{n,m}(h)\|_\infty
    \leq \max_{m=0,\dots,M} \|\bm{\delta}_{(0)}^{n,m}(h)\|_\infty
    +(1+C_{0}h) \, \|\bm{e}(h;t_n)\|_\infty.
\end{equation}
An analogous argument applies to the correction sweeps. For each correction index $k=1,\dots,N_k$, we have
\begin{equation*}
    e_{i(k)}^{n,m}(h)
    =\delta_{i(k)}^{n,m}(h)
    +(\Psi_i^m(Y_{(k)}(t_n))-\Psi_i^m(Y_{(k)}^n)), \qquad m=0,\dots,M,
\end{equation*}
where $Y_{(k)}(t_n)=\left[\bm{y}(t_n),\bm{y}_{(k-1)}(t_{n,0}),\cdots,\bm{y}_{(k-1)}(t_{n,M})\right]$ and $
Y_{(k)}^n=\left[\bm{y}^n,\bm{y}_{(k-1)}^{n,0},\cdots,\bm{y}_{(k-1)}^{n,M}\right].$
Using again Lemma \ref{lem:component-wise_lipschitz}, together with the definition of $Y_{(k)}$, we obtain
\begin{equation}\label{eq:tmp_clean}
\begin{split}
\max_{m=0,\dots,M}\|\bm{e}_{(k)}^{n,m}(h)\|_\infty
&\leq \max_{m=0,\dots,M} \|\bm{\delta}_{(k)}^{n,m}(h)\|_\infty
+(1+C_{k}h)\left(\max_{m=0,\dots,M}\|\bm{e}_{(k-1)}^{n,m}(h)\|_\infty+\|\bm{e}(h;t_n)\|_\infty\right).
\end{split}
\end{equation}
Iterating \eqref{eq:tmp_clean} over $k$ and exploiting \eqref{eq:Err_PRedictor} yields
\begin{equation*}
\max_{m=0,\dots,M}\|\bm{e}_{(k)}^{n,m}(h)\|_\infty
\leq \sum_{j=0}^{k}\left(\prod_{i=j+1}^{k} (1+C_i h)\right)\max_{m=0,\dots,M} \|\bm{\delta}_{(j)}^{n,m}(h)\|_\infty
+\left(\prod_{i=0}^{k} (1+C_i h)\right)\|\bm{e}(h;t_n)\|_\infty, \qquad k=0,\dots,N_k.
\end{equation*}
By consistency (cf. Theorem \ref{thm:Consistency}), $\|\bm{\delta}_{(k)}^{n,m}(h)\|_\infty\leq C_\delta h^{p+1}$. 
Since the constants $C_k$ are independent of $h$, there exists $\hat{C}=\max_{k} C_k>0$ such that $\prod_{i=j}^{k}(1+C_i h)\leq (1+\hat{C}h)^{k-j+1}.$ Applying the previous estimate with $k=N_k$ and $m=M$, we obtain the recursive bound
\begin{equation*}
    \|\bm{e}(h;t_{n+1})\|_\infty\leq (1+\hat{C}h)\|\bm{e}(h;t_n)\|_\infty + \tilde{C}_\delta h^{p+1},
\end{equation*}
for a suitable constant $\tilde{C}_\delta>0$ independent of $h$.
A standard geometric sum estimate then yields
\begin{equation*}
    \|\bm{e}(h;t_{n+1})\|_\infty\leq \tilde{C}_\delta h^{p+1} \sum_{j=0}^{n} (1+\hat{C}h)^{n-j}  \leq \tilde{C}_\delta h^{p+1} \dfrac{\big((1+\hat{C}h)^{n+1}-1\big)}{\hat{C}h}\leq \dfrac{\tilde C_\delta}{\hat{C}}\big(\exp(\hat{C}T)-1\big)\,h^p.
\end{equation*}
Taking the maximum over $n=0,\dots,N_t$ completes the proof.
\end{proof}

The findings of this section show that the SPIDeC framework ensures unconditional positivity and arbitrarily high order convergence, while preserving the qualitative structure of the underlying system, and overcomes the classical order barriers for linear numerical integrators~\cite{Bolley_1978}. This relies on a non-linear multiplicative--exponential formulation, which guarantees positivity for any step size $h>0$, and on the deferred correction mechanism, which can be iterated to attain any prescribed order $p$.

\section{Stability Analysis}\label{sec:Stability}
In this section, we investigate the 
stability properties of the SPIDeC scheme~\eqref{eq:SPIDeC}. Firstly, we apply it to the classical Dahlquist test 
equation \cite[Chapter IV.2]{Hairer_1996} and show that it reproduces 
the exact solution on linear problems, independently of the 
discretization parameters $h$, $M$, and $N_k.$ We then extend the analysis to the non-linear setting, 
establishing contractivity properties for dissipative systems.

\subsection{Linear stability and exact semigroup preservation}\label{subsec:LinStab} 

Here, we focus on the scalar Dahlquist test problem
\begin{equation}\label{eq:Dal_Problem}
    y'(t)=\lambda\, y(t),
    \qquad y(0)=1,
    \qquad \lambda\in\mathbb{C},
    \quad \operatorname{Re}(\lambda)<0,
\end{equation}
whose solution is $y(t)=\exp(t \lambda)$, which constitutes the benchmark for linear stability analysis, as it describes the dynamics of a single eigenmode of a linear operator. The following result establishes the exact integration of linear problems and the $L$-stability (cf. \cite[Definition 3.7]{Hairer_1996}) of the SPIDeC scheme \eqref{eq:SPIDeC}.
\begin{theorem}
\label{Thm:Linear_Stability}
    Let $y(t)$ and $\{y_\lambda^n\}_{n\geq 0}$ denote the continuous and SPIDeC numerical solution to the Dahlquist equation \eqref{eq:Dal_Problem}, respectively. Then, for any $h>0$, $M\geq 0$ and $N_k\geq 0$,
    \begin{equation*}
        y_\lambda^n = \exp(nh \ \lambda)=y(t_n),
        \qquad n\geq 0.
    \end{equation*}
    Consequently, the stability function of the SPIDeC 
    scheme~\eqref{eq:SPIDeC} coincides with that of the continuous flow
    \begin{equation*}
        R(z) = \exp(z), \qquad z = h\lambda.
    \end{equation*}
    In particular, $|R(z)|<1$ for all $\operatorname{Re}(z)<0$, 
    so that the method is $A$-stable. Moreover,
    \[
        \lim_{\operatorname{Re}(z)\to-\infty} |R(z)| = 0,
    \]
    so that the method is $L$-stable.
\end{theorem}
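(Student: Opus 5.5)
The plan is to exploit the fact that, for the Dahlquist problem \eqref{eq:Dal_Problem}, the growth rate $f(y)/y=\lambda$ is constant and independent of the state. Then every exponent in \eqref{eq:SPIDeC} collapses to a known multiple of $h\lambda$. One caveat at the outset: since $\lambda\in\mathbb{C}$, the positivity setting does not literally apply. I would interpret \eqref{eq:SPIDeC} for complex iterates, using $f(y)/y=\lambda$ for every nonzero $y$. Nonzeroness of all stages is guaranteed exactly as in Theorem~\ref{thm:Positivity_Unc}, because $\exp$ never vanishes on $\mathbb{C}$ and $y^0=1\neq0$.

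The core is a one-step induction on the sweep index $k$, in the spirit of the proof of Theorem~\ref{thm:EqSubsetFP}, fixing $n$ and a nonzero $y^n_\lambda$.

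\emph{Predictor.} We get $y^{n,m}_{(0)}=y^n_\lambda\exp(h\tau_m\lambda)$.

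\emph{Sweeps.} If all $y^{n,j}_{(k-1)}\neq0$, then the correction exponent is
\[
h\sum_{j=0}^M Q_{m,j}\lambda=h\lambda\tau_m ,
\]
by the row-sum identity $\sum_j Q_{m,j}=\tau_m$ (exactness on $p\equiv1$, Section~\ref{subsec:Quadrature}). Hence $y^{n,m}_{(k)}=y^n_\lambda\exp(h\tau_m\lambda)$ for every $k$ and $m$, and these values are again nonzero.

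\emph{Step update.} Taking $m=M$ with $\tau_M=1$ and $k=N_k$ gives $y^{n+1}_\lambda=\exp(h\lambda)\,y^n_\lambda$. A trivial induction on $n$ from $y^0_\lambda=1$ then yields $y^n_\lambda=\exp(nh\lambda)=y(t_n)$, independently of $h$, $M$ and $N_k$.

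The stability function is read off from the one-step relation $y^{n+1}_\lambda=R(h\lambda)\,y^n_\lambda$, which gives $R(z)=\exp(z)$. The remaining claims follow from $|\exp(z)|=\exp(\operatorname{Re} z)$:
\begin{itemize}
\item $|R(z)|<1$ whenever $\operatorname{Re}z<0$, which is A-stability;
\item $|R(z)|=e^{\operatorname{Re}z}\to0$ as $\operatorname{Re}z\to-\infty$, uniformly in $\operatorname{Im}z$, which gives L-stability.
\end{itemize}

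I do not expect any step to be a real obstacle. The only point needing care is the complex-valued interpretation of the scheme, namely that the division by $y$ is legitimate. I would handle it by the nonvanishing argument above, or alternatively by noting that the scheme depends on $f$ only through the rate $\varphi=f/y\equiv\lambda$.
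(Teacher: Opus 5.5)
Your proposal is correct and follows essentially the same route as the paper's proof: you use the constant rate $f(y)/y=\lambda$ and the row-sum identity $\sum_{j}Q_{m,j}=\tau_m$ to collapse every sweep to $y^n_\lambda\exp(h\tau_m\lambda)$, then $\tau_M=1$ and induction on $n$, and finally $|e^{z}|=e^{\operatorname{Re}z}$ for the A- and L-stability claims. Your remark that complex stages never vanish (since $\exp$ has no zeros on $\mathbb{C}$) is a harmless refinement that makes explicit the paper's implicit requirement $y\neq 0$.
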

\begin{proof}
    For the scalar test equation~\eqref{eq:Dal_Problem}, $f(y)=\lambda y$, 
    so $f(y)/y = \lambda$ for all $y\neq 0$. The predictor 
    stage of the SPIDeC scheme gives
    \begin{equation*}
         y_{\lambda(0)}^{n,m}
        = y_\lambda^n\exp(h\,\tau_m\,\lambda),
        \qquad m=0,\dots,M.
    \end{equation*}
    For any correction sweep $k\geq 1$, using $f(y)/y=\lambda$ 
    and the consistency identity 
    $\sum_{j=0}^M Q_{m,j}=\tau_m$ 
    (cf.\ Section~\ref{subsec:Quadrature}),
    \[
        y_{\lambda(k)}^{n,m}
        = y_\lambda^n\exp\!\left(
            h\sum_{j=0}^M Q_{m,j}\,\lambda
          \right)
        = y_\lambda^n\exp(h\,\tau_m\,\lambda),
        \qquad m=0,\dots,M.
    \]
    Hence all stages, at every correction level, assume the 
    same value, 
    independently of $h$. Taking $m=M$ and recalling $\tau_M=1$,
    \[
        y_\lambda^{n+1}
        = y_\lambda^n\,\exp(h \lambda)
        = \exp((n+1)h \ \lambda)
        = y(t_{n+1}),
    \]
    which establishes $R(z)=e^z$ and the exact reproduction property 
    by induction on $n$. The $A$- and $L$-stability claims follow 
    immediately from $|e^z|=e^{\operatorname{Re}(z)}$.
\end{proof}
The exact reproduction property of Theorem~\ref{Thm:Linear_Stability} extends component-wise to diagonal linear systems
\begin{equation}\label{eq:Diag_Lin_Sys}
    \bm{y}'(t)= \bm{\Lambda} \bm{y}(t), \quad \bm{y}(0)=[1,\dots,1]^\top,  \quad \bm{\Lambda} = \mathrm{diag}([\lambda_1,\dots,\lambda_N])\in\mathbb{R}^{N\times N}, \quad \operatorname{Re}(\lambda_i)\leq 0, \quad  i=1,\dots,N.
\end{equation}
In particular, for any stepsize $h$, correction index $k$, and stage $m$, we have $\bm{y}_{(k)}^{n,m} = e^{\bm{\Lambda} t_{n,m}} \bm{y}^0 = \bm{y}(t_{n,m}).$ By Theorem~\ref{Thm:Linear_Stability}, the SPIDeC discretization \eqref{eq:SPIDeC} preserves exactly the continuous semigroup $e^{t\bm{\Lambda}}\bm{y}^0$ generated by diagonal linear operators. In particular, the discrete evolution coincides with the exact flow at all grid points, so that no qualitative distortion of the linear dynamics is introduced by the discretization, independently of $h$, $M$ and $N_k$.

\begin{remark}
    $A$-stable Runge--Kutta methods admit a rational stability function $R(z)=P(z)/Q(z)$ approximating the exponential. 
    For $z\in\mathbb{R}^-$, such approximations may lose monotonicity, since $R(z)$ can change sign and induce spurious oscillations, in contrast with the strictly decaying continuous solution. 
    This is a consequence of a classical result in rational approximation theory, which implies that any rational approximation of $e^z$ on $\mathbb{R}^-$ must eventually become negative~\cite{Iserles_2009}.
\end{remark}

\subsection{Non-linear stability and logarithmic contractivity}
\label{subsec:NonlinStab}

The linear stability analysis of Section~\ref{subsec:LinStab} exploits the structure of the Dahlquist equation which decouples the correction mechanism entirely. For general non-linear systems, the stability properties of SPIDeC are naturally analyzed by working in logarithmic variables, for which the collocation scheme reduces to an implicit Runge--Kutta collocation method. This identification enables a systematic non-linear stability theory based on the classical framework of B-stable methods, for which we refer the reader to \cite[Chapter IV.12]{Hairer_1996} and \cite[Chapter 3, Section 35]{Butcher}. 

Given the positivity Assumption \eqref{ass:Positivity}, the component-wise change of variables $u_i = \log(y_i),$ $i=1,\dots,N,$ transforms the original system~\eqref{eq:ODE_system} into
\begin{equation}\label{eq:LogSystem}
    u^{\prime}_i(t) = G_i(\bm{u}(t)),
    \qquad \text{with} \qquad
    G_i(\bm{u}) = \dfrac{f_i(\operatorname{Exp}(\bm{u}))}{\exp(u_i)},
    \qquad i=1,\dots,N,
\end{equation}
where $\bm{u}=[u_1,\dots,u_N]^\top$ and $\operatorname{Exp}(\bm{u}) = [\exp(u_1),\dots,\exp(u_N)]^\top$. The following result shows that the SPIDeC collocation map $\bm{\Psi}^h$ defined in \eqref{eq:CollocationMap} corresponds to the implicit Runge--Kutta (RK) collocation method with tableau
\begin{equation}\label{eq:Tableau}
    \begin{array}{c|cccc}
    \tau_0 & Q_{0,0} & Q_{0,1} & \cdots & Q_{0,M} \\
    \tau_1 & Q_{1,0} & Q_{1,1} & \cdots & Q_{1,M} \\
    \vdots & \vdots  & \vdots  & \ddots & \vdots  \\
    \tau_M & Q_{M,0} & Q_{M,1} & \cdots & Q_{M,M} \\
    \hline
           & Q_{M,0} & Q_{M,1} & \cdots & Q_{M,M}
    \end{array}
\end{equation}
applied to the logarithmic system \eqref{eq:LogSystem}.

\begin{theorem}\label{prop:RK_Log}
    Under the Assumption~\ref{ass:Solv}, let $\tilde{\bm{Y}}^n=[\tilde{\bm{y}}^{n,0},\dots,\tilde{\bm{y}}^{n,M}]$ denote the unique collocation solution to \eqref{eq:coll_syst_Con_L}. Then, the logarithmic formulation $\tilde{u}_i^{n,m} = \log(\tilde{y}_i^{n,m})$ satisfies the implicit Runge--Kutta system
    \begin{equation}\label{eq:RK_Log}
        \tilde{u}_i^{n,m}=  u_i^n + h\sum_{j=0}^M Q_{m,j}\,G_i\!\left(\tilde{\bm{u}}^{n,j}\right), \quad \begin{array}{l}
             i=1,\dots,N,  \\
             m=0,\dots,M, 
        \end{array} \quad \text{with output} \quad u_i^{n+1} = u_i^n + h\sum_{j=0}^M b_j\, G_i\!\left(\tilde{\bm{u}}^{n,j}\right), \quad b_j := Q_{M,j}.
    \end{equation}
\end{theorem}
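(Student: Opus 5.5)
The argument is a direct change of variables applied to the collocation system \eqref{eq:coll_syst_Con_L}. By Assumption~\ref{ass:Solv} the solution $\tilde{\bm{Y}}^n$ has strictly positive entries, and $\bm{y}^n\in\mathbb{R}^N_+$. Hence the quantities $\tilde{u}_i^{n,m}=\log(\tilde{y}_i^{n,m})$ and $u_i^n=\log(y_i^n)$ are well defined. The equation $\bm{\mathcal{L}}^2(\tilde{\bm{Y}}^n)=\bm{0}$ reads, component-wise,
\begin{equation*}
\tilde{y}_i^{n,m}=y_i^n\exp\!\Big(h\sum_{j=0}^M Q_{m,j}\,\frac{f_i(\tilde{\bm{y}}^{n,j})}{\tilde{y}_i^{n,j}}\Big).
\end{equation*}
Taking logarithms of both sides gives exactly the form \eqref{eq:coll_syst_SenzaL}.

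The key identification is the following. Since $\tilde{\bm{y}}^{n,j}=\operatorname{Exp}(\tilde{\bm{u}}^{n,j})$ and $\tilde{y}_i^{n,j}=\exp(\tilde{u}_i^{n,j})$, the definition of $G_i$ in \eqref{eq:LogSystem} yields
\begin{equation*}
\frac{f_i(\tilde{\bm{y}}^{n,j})}{\tilde{y}_i^{n,j}}=G_i(\tilde{\bm{u}}^{n,j}).
\end{equation*}
Substituting this into the logarithmic form of the collocation equations produces the stage equations in \eqref{eq:RK_Log} for every $i$ and $m$. These are precisely the internal-stage relations of the Runge--Kutta method with coefficient matrix $A=(Q_{m,j})$ and abscissae $\tau_m$, which is the tableau \eqref{eq:Tableau}.

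For the output relation, the collocation map \eqref{eq:CollocationMap} defines $y_i^{n+1}=\Psi_i^h(\bm{y}^n)=\tilde{y}_i^{n,M}$, so $u_i^{n+1}=\tilde{u}_i^{n,M}$. The stage equation with $m=M$ then gives
\begin{equation*}
u_i^{n+1}=u_i^n+h\sum_{j=0}^M Q_{M,j}\,G_i(\tilde{\bm{u}}^{n,j}),
\end{equation*}
which is the output formula with $b_j=Q_{M,j}$. Because $\tau_M=1$, this is the stiffly accurate structure of the last row of \eqref{eq:Tableau}.

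I would add two remarks to justify calling this a \emph{collocation} method. First, the weights satisfy $Q_{m,j}=\int_0^{\tau_m}\ell_j$ by \eqref{eq:forma_pesi}. Second, the row sums satisfy $\sum_j Q_{m,j}=\tau_m$, which is the consistency condition $c=A\mathbf{1}$ of Section~\ref{subsec:Quadrature}.

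The only point needing care is that the change of variables is a bijection between positive solutions of \eqref{eq:coll_syst_Con_L} and real solutions of \eqref{eq:RK_Log}. This is needed if one also wants uniqueness of the Runge--Kutta stages. The map $\log$ sends $\mathbb{R}_{>0}$ bijectively onto $\mathbb{R}$, and the inverse substitution $\tilde{y}=\exp(\tilde{u})$ reverses every step above. Therefore uniqueness in Assumption~\ref{ass:Solv} transfers directly to uniqueness of the Runge--Kutta stage solution. The rest of the argument is purely algebraic.
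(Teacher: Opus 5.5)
Your proposal is correct and follows the paper's proof: take logarithms of the collocation equations, identify $f_i(\tilde{\bm y}^{n,j})/\tilde y_i^{n,j}=G_i(\tilde{\bm u}^{n,j})$ via the definition of $G_i$, and read off the output formula from the row $m=M$. Your extra observation that the logarithmic change of variables transfers the uniqueness in Assumption~\ref{ass:Solv} to the Runge--Kutta stage solution is correct and a useful supplement, though the statement does not require it.
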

\begin{proof}
    The collocation system~\eqref{eq:coll_syst_Con_L} reads
    \begin{equation*}
        \log(\tilde{y}_i^{n,m})= \log(y_i^n)+ h\sum_{j=0}^M Q_{m,j}\frac{f_i(\tilde{\bm{y}}^{n,j})}{\tilde{y}_i^{n,j}},
        \qquad \begin{array}{l}
             i=1,\dots,N,  \\
             m=0,\dots,M. 
        \end{array}
    \end{equation*}
    Recalling the change of variables and the definition of $G_i$ in \eqref{eq:LogSystem}, this is exactly~\eqref{eq:RK_Log}. The output formula then follows by taking $m=M.$
\end{proof}

The non-linear stability of the RK method~\eqref{eq:RK_Log} is governed by the dissipativity of the log-transformed vector field $\bm{G}$. We introduce the relevant condition as follows.
\begin{definition}
\label{def:LogDiss}
    The system~\eqref{eq:ODE_system} is called 
    \emph{logarithmically dissipative} with 
    constant $\nu\in\mathbb{R}$ if the function $\bm{G}=[G_1,\dots,G_N]^\top$, with $G_i$ defined in~\eqref{eq:LogSystem}, 
    satisfies the one-sided Lipschitz condition
    \begin{equation}\label{eq:OSLC}
        \langle
          \bm{G}(\bm{u}) - \bm{G}(\bm{v}),\,
          \bm{u} - \bm{v}
        \rangle
        \leq
        \nu\,\|\bm{u}-\bm{v}\|^2,
        \qquad
        \forall\,\bm{u},\bm{v}\in\mathbb{R}^N.
    \end{equation}
    When $\nu<0$, the system is called 
    \emph{strictly logarithmically dissipative}.
\end{definition}
If the function $\bm{G}$ is continuously differentiable on $\mathbb{R}_{+}^{N}$, the condition \eqref{eq:OSLC} is equivalent to $\mu(\bm{J}_{\bm{G}})\leq \nu,$ (see \cite[Chapter I.10]{Hairer_I}) where $\mu$ denotes the logarithmic norm of matrices and $\bm{J}_{\bm{G}}$ is the Jacobian of $\bm{G}.$ The logarithmic dissipativity condition \eqref{eq:OSLC} holds for several classes of interest, such as in case of Lotka--Volterra systems ($G_i(\bm{u}) = r_i + \sum_{j=1}^{n}A_{ij}\exp(u_j)$) with negative semidefinite interaction matrix $A$. 

We consider the \emph{log-Euclidean metric} on the positive $N$-dimensional cone defined as
\begin{equation}\label{eq:distance}
    \operatorname{d}(\bm{y},\bm{z})= \|\operatorname{Log}(y\oslash\bm{z})\|= \|\operatorname{Log}(\bm{y})-\operatorname{Log}(\bm{z})\|, \qquad \text{where} \qquad \operatorname{Log}(\bm{v})=[\log(v_1),\dots,\log(v_N)]^\top,
\end{equation}
which, when restricted to any compact subset of $\mathbb{R}_{+}^{N}$, is equivalent to the standard Euclidean norm. The function $\operatorname{d}:\mathbb{R}_{+}^{N}\times\mathbb{R}_{+}^{N}\to \mathbb{R}_{\geq 0}$ represents a natural distance for positive dynamical systems, since it is invariant under component-wise rescaling $\bm{y}\mapsto D\bm{y}$ for any diagonal $D>0$, and it metrizes the group structure of $(\mathbb{R}_{+}^{N}, \cdot)$. In case of strictly logarithmically dissipative systems, by the standard Gronwall inequality applied to~\eqref{eq:LogSystem}, two exact solutions $\bm{y}(t)$ and $\bm{z}(t)$ of~\eqref{eq:ODE_system} satisfy
\begin{equation}\label{eq:ContContractivity}
     \operatorname{d}(\bm{y}(t),\bm{z}(t))
        \leq \exp(\nu t)\,\operatorname{d}(\bm{y}(0),\bm{z}(0))\leq \operatorname{d}(\bm{y}(0),\bm{z}(0)),
        \qquad t\geq 0.
\end{equation}

Here, we are interested in the unconditional preservation, at the level of the collocation map, of the contractivity property \eqref{eq:ContContractivity}. The following result shows that the SPIDeC collocation map inherits the  algebraic stability (cf. \cite[Definition 357B]{Butcher}) of the underlying Runge--Kutta method in logarithmic variables \eqref{eq:RK_Log}. 
\begin{theorem}[Contractivity of the collocation map]
\label{thm:NonlinContr}
    Let Assumption~\ref{ass:Solv} hold and suppose that 
    the system~\eqref{eq:ODE_system} is logarithmically 
    dissipative with constant $\nu\leq 0$. Assume that 
    the RK scheme~\eqref{eq:RK_Log} is algebraically 
    stable, that is
    \begin{equation}\label{eq:Algebraic_Stability}
        b_j = Q_{M,j} \geq 0
        \quad \text{for each } j=0,\dots,M,
        \qquad \text{and} \qquad
        \bm{\mathcal{M}}
        = \operatorname{diag}(\bm{b})\,Q
          + Q^\top\operatorname{diag}(\bm{b})
          - \bm{b}\bm{b}^\top
        \ \text{is positive semi-definite,}
    \end{equation}
    where $\bm{b}=[b_0,\dots,b_M]^\top$.
    Then, for any $h>0$ and any 
    $\bm{y},\bm{z}\in\mathbb{R}_{+}^{N}$,
    \begin{equation}\label{eq:ContractColloc}
        \operatorname{d}\bigl(
          \bm{\Psi}^h(\bm{y}),\,
          \bm{\Psi}^h(\bm{z})
        \bigr)
        \leq \operatorname{d}(\bm{y},\bm{z}),
    \end{equation}
    where $\operatorname{d}$ denotes the log-Euclidean 
    metric~\eqref{eq:distance}. In particular, 
    the map $\bm{\Psi}^h$ in \eqref{eq:CollocationMap} is non-expansive for $\nu=0$ and 
    strictly contractive for $\nu<0$.
\end{theorem}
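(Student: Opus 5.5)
The plan is to reduce the statement to the classical B-stability theorem for algebraically stable Runge--Kutta methods (Burrage--Butcher, Crouzeix), using Theorem~\ref{prop:RK_Log}. For $\bm{y},\bm{z}\in\mathbb{R}_{+}^{N}$ set $\bm{u}^n=\operatorname{Log}(\bm{y})$ and $\bm{v}^n=\operatorname{Log}(\bm{z})$. Let $\tilde{\bm{u}}^{n,j}$ and $\tilde{\bm{v}}^{n,j}$ be the logarithms of the unique collocation stages given by Assumption~\ref{ass:Solv}. By Theorem~\ref{prop:RK_Log}, these are internal stages of the RK method \eqref{eq:RK_Log} with tableau \eqref{eq:Tableau} applied to \eqref{eq:LogSystem}. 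The outputs are $\bm{u}^{n+1}=\operatorname{Log}(\bm{\Psi}^h(\bm{y}))$ and $\bm{v}^{n+1}=\operatorname{Log}(\bm{\Psi}^h(\bm{z}))$. Since $\operatorname{d}$ in \eqref{eq:distance} is the Euclidean distance between logarithms, \eqref{eq:ContractColloc} is equivalent to $\|\bm{u}^{n+1}-\bm{v}^{n+1}\|\le\|\bm{u}^n-\bm{v}^n\|$.

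To prove this, introduce $\Delta\bm{u}=\bm{u}^n-\bm{v}^n$, $\Delta\bm{U}_j=\tilde{\bm{u}}^{n,j}-\tilde{\bm{v}}^{n,j}$, and $\Delta\bm{G}_j=h\bigl(\bm{G}(\tilde{\bm{u}}^{n,j})-\bm{G}(\tilde{\bm{v}}^{n,j})\bigr)$. Then $\Delta\bm{u}^{n+1}=\Delta\bm{u}+\sum_j b_j\Delta\bm{G}_j$ and $\Delta\bm{U}_m=\Delta\bm{u}+\sum_j Q_{m,j}\Delta\bm{G}_j$. Expanding the squared norm and substituting $\Delta\bm{u}=\Delta\bm{U}_j-\sum_l Q_{j,l}\Delta\bm{G}_l$ into the cross term gives the standard identity
\begin{equation*}
\|\Delta\bm{u}^{n+1}\|^2=\|\Delta\bm{u}\|^2+2\sum_{j=0}^M b_j\langle\Delta\bm{G}_j,\Delta\bm{U}_j\rangle-\sum_{j,l=0}^M \mathcal{M}_{jl}\langle\Delta\bm{G}_j,\Delta\bm{G}_l\rangle .
\end{equation*}
The last sum is nonnegative because $\bm{\mathcal{M}}\succeq0$: write $\bm{\mathcal{M}}=W^\top W$ and apply the identity componentwise, or equivalently use $\bm{\mathcal{M}}\otimes I_N\succeq0$. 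The middle sum is at most $2h\nu\sum_j b_j\|\Delta\bm{U}_j\|^2\le0$, using $b_j\ge0$, $\nu\le0$ and the one-sided Lipschitz condition \eqref{eq:OSLC}. This yields non-expansiveness for every $h>0$.

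For strict contraction when $\nu<0$, note that $\sum_j b_j=\tau_M=1$ by the exactness property of Section~\ref{subsec:Quadrature}. The estimate then keeps the term $2h\nu\sum_j b_j\|\Delta\bm{U}_j\|^2$, which is strictly negative unless every $\Delta\bm{U}_j$ with $b_j>0$ vanishes. If all those stage differences vanish, then $\Delta\bm{G}_j=0$ for those $j$, so $\Delta\bm{u}^{n+1}=\Delta\bm{u}$. I would then argue that $\Delta\bm{u}=0$ in that case: for instance, the last stage has $\tau_M=1$, so $\Delta\bm{U}_M=\Delta\bm{u}^{n+1}$, and $b_M>0$ then forces $\Delta\bm{u}=0$. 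This gives strict inequality whenever $\bm{y}\neq\bm{z}$.

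The main obstacle I expect is not the algebra, which is routine, but the well-posedness and bookkeeping. Condition \eqref{eq:OSLC} is assumed on all of $\mathbb{R}^N$, so it applies to arbitrary stages; existence of the stages is guaranteed by Assumption~\ref{ass:Solv} rather than by the usual B-stability solvability theory. Care is also needed with the role of $\tau_M=1$ and with $b_M>0$ in the strictness argument. If needed, I would phrase strictness only as $\|\Delta\bm{u}^{n+1}\|^2\le\|\Delta\bm{u}\|^2+2h\nu\sum_j b_j\|\Delta\bm{U}_j\|^2$ and conclude strictness from it.
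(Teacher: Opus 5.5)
Your argument is correct, and the non-expansive part is exactly the paper's proof. You pass to logarithmic variables through Theorem~\ref{prop:RK_Log}, expand the squared norm of the output difference into the standard algebraic-stability identity, drop the $\bm{\mathcal{M}}$-term by positive semi-definiteness, and bound the cross term by \eqref{eq:OSLC} with $b_j\ge 0$, $\nu\le 0$.

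The two routes differ only in the strictness step. The paper asserts that $\bm{\theta}\neq\bm{0}$ forces $\bm{\gamma}^j\neq\bm{0}$ for some $j$, appealing to injectivity of the collocation solution in the initial datum. That argument tacitly also needs $b_j>0$ for that $j$. You instead use that the last row of $Q$ is $\bm{b}$, so $\Delta\bm{U}_M=\Delta\bm{u}^{n+1}$, together with $b_M>0$.

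The condition $b_M>0$ is not among the hypotheses; it holds for Radau~IIA and for the trapezoidal rule. You can remove it as follows. If $b_l=0$, then $\mathcal{M}_{ll}=0$, so positive semi-definiteness kills the $l$-th column: $\mathcal{M}_{jl}=b_jQ_{j,l}=0$ for all $j$. Now suppose $\Delta\bm{U}_j=\bm{0}$ for every $j$ with $b_j>0$, and fix such a $j$. Then $\Delta\bm{G}_l=\bm{0}$ whenever $b_l>0$, and $Q_{j,l}=0$ whenever $b_l=0$, so $\bm{0}=\Delta\bm{U}_j=\Delta\bm{u}$. Such a $j$ exists because $\sum_j b_j=1$.

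When $b_M>0$ does hold, your route gives more than the paper states. Keeping only the $j=M$ term in $2h\nu\sum_j b_j\|\Delta\bm{U}_j\|^2$ yields
\[
\operatorname{d}\bigl(\bm{\Psi}^h(\bm{y}),\bm{\Psi}^h(\bm{z})\bigr)\le\bigl(1+2h|\nu|\,b_M\bigr)^{-1/2}\,\operatorname{d}(\bm{y},\bm{z}).
\]
This is a uniform contraction factor below one for $\nu<0$, rather than only a strict inequality for $\bm{y}\neq\bm{z}$.
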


\begin{proof}
    By Theorem~\ref{prop:RK_Log}, the map $\bm{\Psi}^h$ 
    acts in log variables as the implicit RK 
    method~\eqref{eq:RK_Log} applied 
    to~\eqref{eq:LogSystem}. Let $\tilde{\bm{u}}^m$ and 
    $\tilde{\bm{v}}^m$ denote the stage values 
    corresponding to initial data 
    $\bm{u}^n=\operatorname{Log}(\bm{y})$ and 
    $\bm{v}^n=\operatorname{Log}(\bm{z})$, and define
    \[
        \bm{\theta} = \bm{u}^n - \bm{v}^n,
        \qquad
        \bm{\gamma}^m = \tilde{\bm{u}}^m 
                        - \tilde{\bm{v}}^m,
        \qquad
        \bm{\kappa}^m = \bm{G}(\tilde{\bm{u}}^m)
                        - \bm{G}(\tilde{\bm{v}}^m),
        \qquad m=0,\dots,M.
    \]
    Note that $\|\bm{\theta}\|
    = \operatorname{d}(\bm{y},\bm{z})$ and 
    $\|\bm{\gamma}^M\|
    = \operatorname{d}(\bm{\Psi}^h(\bm{y}),
                      \bm{\Psi}^h(\bm{z}))$.
    Subtracting the RK systems~\eqref{eq:RK_Log} for 
    the two initial data gives
    \[
        \bm{\gamma}^m 
        = \bm{\theta} 
          + h\sum_{j=0}^M Q_{m,j}\,\bm{\kappa}^j,
        \qquad m=0,\dots,M,
        \qquad\text{and}\qquad
        \bm{\gamma}^M 
        = \bm{\theta} 
          + h\sum_{j=0}^M b_j\,\bm{\kappa}^j.
    \]
    Expanding $\|\bm{\gamma}^M\|^2$ and substituting 
    $\bm{\theta} = \bm{\gamma}^j 
    - h\sum_{\ell=0}^M Q_{j,\ell}\bm{\kappa}^\ell$ yields
    \begin{equation}\label{eq:NormExpansion}
        \begin{split}
        \|\bm{\gamma}^M\|^2
        &= \|\bm{\theta}\|^2
           + 2h\sum_{j=0}^M b_j
             \langle\bm{\theta},\bm{\kappa}^j\rangle
           + h^2\Bigl\|\sum_{j=0}^M 
             b_j\bm{\kappa}^j\Bigr\|^2 \\
        &= \|\bm{\theta}\|^2
           + 2h\sum_{j=0}^M b_j
             \Bigl(\langle\bm{\gamma}^j,\bm{\kappa}^j\rangle
             - h\sum_{\ell=0}^M Q_{j,\ell}
               \langle\bm{\kappa}^\ell,
                       \bm{\kappa}^j\rangle\Bigr)
           + h^2\sum_{j=0}^M\sum_{\ell=0}^M
             b_j b_\ell
             \langle\bm{\kappa}^j,\bm{\kappa}^\ell
             \rangle \\
        &= \|\bm{\theta}\|^2
           + 2h\sum_{j=0}^M b_j
             \langle\bm{\gamma}^j,\bm{\kappa}^j\rangle
           - h^2\sum_{j=0}^M\sum_{\ell=0}^M
             \mathcal{M}_{j,\ell}
             \langle\bm{\kappa}^j,
                     \bm{\kappa}^\ell\rangle,
        \end{split}
    \end{equation}
    where $\mathcal{M}_{j,\ell} 
    = b_j Q_{j,\ell} + b_\ell Q_{\ell,j} - b_j b_\ell$.
    Since $\mathcal{M}\succcurlyeq 0$ by algebraic 
    stability, the last term is non-negative. The 
    one-sided Lipschitz condition~\eqref{eq:OSLC} gives
    \[
        \langle\bm{\gamma}^j,\bm{\kappa}^j\rangle
        = \langle\tilde{\bm{u}}^j - \tilde{\bm{v}}^j,\,
                  \bm{G}(\tilde{\bm{u}}^j)
                  - \bm{G}(\tilde{\bm{v}}^j)\rangle
        \leq \nu\|\bm{\gamma}^j\|^2 \leq 0.
    \]
    Since $\nu\leq 0$ and $b_j\geq 0$, the middle term 
    in~\eqref{eq:NormExpansion} is also non-positive. 
    Therefore $\|\bm{\gamma}^M\|^2 \leq \|\bm{\theta}\|^2$, 
    and taking square roots yields~\eqref{eq:ContractColloc}.
    
    If $\nu<0$, then $\langle\bm{\gamma}^j,\bm{\kappa}^j\rangle < 0$ whenever $\bm{\gamma}^j\neq\bm{0}$. Since $y \neq z$ implies $\theta \neq 0$, and Assumption \ref{ass:Solv} guarantees that the collocation 
solution depends continuously and injectively on 
the initial datum $\bm{u}^n$, we have 
$\gamma^j = \tilde{\bm{u}}^j - \tilde{\bm{v}}^j 
\neq \bm{0}$ for at least one $j\in\{0,\dots,M\}$, 
giving strict inequality.
\end{proof}

The algebraic stability condition~\eqref{eq:Algebraic_Stability} depends on the quadrature matrix $Q$, which is entirely determined by the choice of nodes $\{\tau_j\}_{j=0}^M$ via~\eqref{eq:forma_pesi}. Within the SPIDeC framework~\eqref{eq:SPIDeC}, the only structural constraint on the nodes is $\tau_M=1$, which is required by the time-marching update~\eqref{eq:SPIDeC}$_3$. The choice of $\tau_0$ is instead free, and different selections lead to different stability properties. The most natural choice satisfying $\tau_0=0$ is the Gauss--Lobatto family. However, the associated Lobatto~IIIA collocation tableau is algebraically stable only for $M=1$ (the implicit trapezoidal rule), and the condition~\eqref{eq:Algebraic_Stability} fails for $M\geq 2.$ By contrast, Gauss--Radau (right) nodes, characterized by $\tau_0>0$ and $\tau_M=1$, yield algebraically stable tableaux for all $M\geq 1$~\cite[Theorem~359C]{Butcher}, making Theorem~\ref{thm:NonlinContr} applicable unconditionally. Gauss--Legendre nodes also yield algebraically stable tableaux for all $M$~\cite[Chapter~IV.12]{Hairer_1996}, but require $\tau_M<1$, which conflicts with the time-marching structure of SPIDeC and cannot be accommodated. Therefore, within the SPIDeC framework, Gauss--Radau (right) nodes represent the canonical choice when unconditional non-linear contractivity is sought, while Gauss--Lobatto nodes remain a valid option for $M=1$ or when the constraint $\tau_0=0$ is desirable for other structural reasons. 

From Theorem \ref{thm:NonlinContr} the non-linear contractivity of the collocation map holds independently of the choice of $h.$ Conversely, as established with the following result, such property is recovered by the predictor scheme only for a sufficiently small stepsize.  

\begin{theorem}[Contractivity of the predictor for $\tau_0=0$]\label{thm:PredContr}
    Let $N_k=0$ and assume that the system~\eqref{eq:ODE_system}
    is logarithmically dissipative with constant $\nu\leq 0$.
    Suppose that $\tau_0=0$ and that $\bm{G}$ is Lipschitz continuous with constant $L.$ Then, for any $h>0$ and any $\bm{y},\bm{z}\in\mathbb{R}_{+}^{N}$,
    \begin{equation}\label{eq:PredContract}
    d\bigl(\bm{\Phi}^{h,0}(\bm{y}),\,
           \bm{\Phi}^{h,0}(\bm{z})\bigr)
    \leq
    \sqrt{1 + 2h\nu + h^2L^2}\;
    d(\bm{y},\bm{z}).
    \end{equation}
    In particular, the predictor is non-expansive whenever  $h \leq 2|\nu|/L^2.$ Moreover, if $\nu<0$, contractivity holds for all sufficiently small $h>0$.
\end{theorem}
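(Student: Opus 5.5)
Since $N_k=0$ and $\tau_M=1$, the predictor map acts on the last stage as $\Phi_i^{h,0}(\bm{y})=y_i\exp\bigl(h f_i(\bm{y})/y_i\bigr)$. In logarithmic variables $\bm{u}=\operatorname{Log}(\bm{y})$ this becomes an explicit Euler step for the transformed system~\eqref{eq:LogSystem}:
\[
\operatorname{Log}\bigl(\bm{\Phi}^{h,0}(\bm{y})\bigr)=\bm{u}+h\,\bm{G}(\bm{u}).
\]
So the plan is to prove the stated estimate for explicit Euler applied to $\bm{G}$, measured in the Euclidean norm of the log variables. That norm is exactly the metric $\operatorname{d}$ in~\eqref{eq:distance}. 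The hypothesis $\tau_0=0$ plays no role in the final-stage formula; I take it only as the setting in which the statement is made.

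Set $\bm{u}=\operatorname{Log}(\bm{y})$, $\bm{v}=\operatorname{Log}(\bm{z})$, $\bm{\theta}=\bm{u}-\bm{v}$ and $\bm{\kappa}=\bm{G}(\bm{u})-\bm{G}(\bm{v})$. Then
\[
\operatorname{d}\bigl(\bm{\Phi}^{h,0}(\bm{y}),\bm{\Phi}^{h,0}(\bm{z})\bigr)^2=\|\bm{\theta}+h\bm{\kappa}\|^2=\|\bm{\theta}\|^2+2h\langle\bm{\theta},\bm{\kappa}\rangle+h^2\|\bm{\kappa}\|^2 .
\]
This is the same expansion as in~\eqref{eq:NormExpansion}, specialized to a single stage with no implicit part. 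I bound the middle term by the one-sided Lipschitz condition~\eqref{eq:OSLC}, which gives $\langle\bm{\theta},\bm{\kappa}\rangle\le\nu\|\bm{\theta}\|^2$. I bound the last term by the Lipschitz continuity of $\bm{G}$, which gives $\|\bm{\kappa}\|^2\le L^2\|\bm{\theta}\|^2$. Together these yield $(1+2h\nu+h^2L^2)\|\bm{\theta}\|^2$, and taking square roots gives~\eqref{eq:PredContract}. The radicand is nonnegative automatically, because it bounds a squared norm; one can also check directly that $|\nu|\le L$.

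The remaining two claims are elementary. Non-expansiveness holds when $2h\nu+h^2L^2\le 0$, i.e.\ $h\le -2\nu/L^2=2|\nu|/L^2$ since $\nu\le0$. If $\nu<0$, then $2h\nu+h^2L^2<0$ for every $0<h<2|\nu|/L^2$, so the factor is strictly less than one, which gives strict contractivity for all sufficiently small $h$.

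I expect no substantive obstacle. The only point needing care is the identification of the predictor's final stage with explicit Euler in log variables. That identification relies on $\tau_M=1$ and on componentwise positivity, which makes $\operatorname{Log}$ well defined and is guaranteed by Theorem~\ref{thm:Positivity_Unc}. I would also remark that, unlike Theorem~\ref{thm:NonlinContr}, no algebraic stability is available here: the explicit $h^2L^2$ term cannot be absorbed, which is why the result is only conditional in $h$.
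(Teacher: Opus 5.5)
Your proposal is correct and follows essentially the same argument as the paper. You rewrite the predictor as explicit Euler for $\bm{G}$ in logarithmic variables, expand $\|\bm{\theta}+h\bm{\kappa}\|^2$, and bound the cross term by the one-sided Lipschitz condition and the quadratic term by the Lipschitz constant; your side remarks that only $\tau_M=1$ (not $\tau_0=0$) is needed for the final-stage formula, and that $|\nu|\le L$ keeps the radicand nonnegative, are also accurate.
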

\begin{proof}
    By construction, when $N_k=0$ and $\tau_0=0$ the SPIDeC method reduces,
    in logarithmic variables, to the explicit Euler scheme $\bm{u}^{n+1} = \bm{u}^n + h\,\bm{G}(\bm{u}^n).$ Let $\bm{u}^n=\operatorname{Log}(\bm{y})$ and $\bm{v}^n=\operatorname{Log}(\bm{z})$, and define
    \[
    \bm{\gamma}^n = \bm{u}^n - \bm{v}^n, 
    \qquad
    \bm{\kappa}^n = \bm{G}(\bm{u}^n) - \bm{G}(\bm{v}^n).
    \]
    Then $\bm{\gamma}^{n+1} = \bm{\gamma}^n + h\,\bm{\kappa}^n.$ Taking the squared norm yields
    \begin{equation*}
        \|\bm{\gamma}^{n+1}\|^2= \|\bm{\gamma}^n\|^2
           + 2h \langle \bm{\gamma}^n, \bm{\kappa}^n \rangle
           + h^2 \|\bm{\kappa}^n\|^2.
    \end{equation*}
    By the one-sided Lipschitz condition~\eqref{eq:OSLC}, $\langle \bm{\gamma}^n, \bm{\kappa}^n \rangle\leq \nu \|\bm{\gamma}^n\|^2, $
    and by Lipschitz continuity, $\|\bm{\kappa}^n\|
    \leq L \|\bm{\gamma}^n\|.$ Therefore,
    \begin{equation*}
        \|\bm{\gamma}^{n+1}\|^2 \leq (1 + 2h\nu + h^2L^2)\|\bm{\gamma}^n\|^2.
    \end{equation*}
    Taking square roots and recalling that $\|\bm{\gamma}^n\| = d(\bm{y},\bm{z})$ concludes the proof.
\end{proof}

A straightforward application of the convergence results of Section \ref{sec:Error_Analysis} leads to the following theorem.
\begin{theorem}[Contractivity after $N_k$ correction 
               sweeps]
\label{thm:NkContract}
    Let the assumptions of 
    Theorem~\ref{thm:NonlinContr} hold and let 
    $N_k\leq M$. Then, there exists a constant $C>0$, 
    independent of $h$, such that for any 
    $\bm{y},\bm{z}\in\mathbb{R}_{+}^{N}$,
    \begin{equation}\label{eq:NkContract}
        \operatorname{d}\bigl(
          \bm{\Phi}^{h,N_k}(\bm{y}),\,
          \bm{\Phi}^{h,N_k}(\bm{z})
        \bigr)
        \leq \operatorname{d}(\bm{y},\bm{z}) 
             + C\,h^{N_k+2}.
    \end{equation}
\end{theorem}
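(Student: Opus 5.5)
The plan is a triangle inequality in the log-Euclidean metric, inserting the collocation map $\bm{\Psi}^h$ as an intermediate:
\begin{equation*}
\operatorname{d}\bigl(\bm{\Phi}^{h,N_k}(\bm{y}),\bm{\Phi}^{h,N_k}(\bm{z})\bigr)\leq \operatorname{d}\bigl(\bm{\Phi}^{h,N_k}(\bm{y}),\bm{\Psi}^{h}(\bm{y})\bigr)+\operatorname{d}\bigl(\bm{\Psi}^{h}(\bm{y}),\bm{\Psi}^{h}(\bm{z})\bigr)+\operatorname{d}\bigl(\bm{\Psi}^{h}(\bm{z}),\bm{\Phi}^{h,N_k}(\bm{z})\bigr).
\end{equation*}
By Theorem~\ref{thm:NonlinContr}, the middle term is bounded by $\operatorname{d}(\bm{y},\bm{z})$ for every $h>0$. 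It therefore suffices to show that the sweep-to-collocation defect satisfies $\operatorname{d}(\bm{\Phi}^{h,N_k}(\bm{x}),\bm{\Psi}^h(\bm{x}))\leq C' h^{N_k+2}$ uniformly for $\bm{x}$ in the relevant compact set. Then $C=2C'$.

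For the defect, I will work in logarithmic variables. Let $\bm{u}_{(k)}^{m}=\operatorname{Log}(\bm{y}^{n,m}_{(k)})$ and let $\tilde{\bm{u}}^m$ be the collocation stages from Theorem~\ref{prop:RK_Log}. Subtracting \eqref{eq:SPIDeC}$_2$ from \eqref{eq:RK_Log} gives
\begin{equation*}
\tilde{\bm{u}}^m-\bm{u}^m_{(k)}=h\sum_{j=0}^M Q_{m,j}\bigl(\bm{G}(\tilde{\bm{u}}^j)-\bm{G}(\bm{u}^j_{(k-1)})\bigr).
\end{equation*}
With $E_k=\max_m\|\tilde{\bm{u}}^m-\bm{u}^m_{(k)}\|$, Lipschitz continuity of $\bm{G}$ on the compact set then gives $E_k\leq hC_QL\,E_{k-1}$. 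For the predictor I will compare
\begin{equation*}
\tilde{\bm{u}}^m-\bm{u}^m_{(0)}=h\sum_jQ_{m,j}\bigl(\bm{G}(\tilde{\bm{u}}^j)-\bm{G}(\bm{u}^n)\bigr),
\end{equation*}
using $\sum_jQ_{m,j}=\tau_m$. Since the collocation stages satisfy $\|\tilde{\bm{u}}^j-\bm{u}^n\|=\mathcal{O}(h)$, this gives $E_0=\mathcal{O}(h^2)$. Iterating yields $E_{N_k}=\mathcal{O}(h^{N_k+2})$, which is the geometric DeC contraction. Evaluating at $m=M$ gives the defect bound directly in the metric $\operatorname{d}$, with no need to pass through Euclidean norms.

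Alternatively, for $N_k\leq M$ the estimate could be read off Theorem~\ref{thm:Consistency}. Both $\bm{\Phi}^{h,N_k}$ and $\bm{\Psi}^h$ have local error $\mathcal{O}(h^{N_k+2})$ and $\mathcal{O}(h^{M+2})$ with respect to the exact flow, so their difference is $\mathcal{O}(h^{N_k+2})$ because $N_k\leq M$. This explains the hypothesis $N_k\leq M$. On compact subsets, $\operatorname{d}$ is equivalent to the Euclidean norm, so Euclidean estimates transfer to $\operatorname{d}$. I prefer the direct log-variable iteration, since it does not route through the exact solution and keeps constants uniform in the initial datum.

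The main obstacle is uniformity. The constants $L$, $C_Q$ and the bound on $\bm{G}$ must be uniform over all stage values. This requires confining $\bm{y}$, $\bm{z}$ and all sweep and collocation stages to a fixed compact subset of $\mathbb{R}^N_+$ for $h\leq h^*$. For the sweeps, Lemma~\ref{lem:SPIDeC_invariant_set} handles this. For the collocation stages, I will use Assumption~\ref{ass:Solv} together with an a priori bound $|\tilde u_i^m-u_i^n|\leq hC_Q\sup|G_i|$ on the compact region. So $C$ is independent of $h$ but depends on this compact set, and I will state that the estimate is meant for $\bm{y},\bm{z}$ ranging in such a set, as is implicit in the convergence framework of Section~\ref{sec:Error_Analysis}.
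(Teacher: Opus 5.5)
Your proposal is correct. Its outer structure (triangle inequality through $\bm{\Psi}^h$, middle term from Theorem~\ref{thm:NonlinContr}) is the paper's, but you bound the sweep-to-collocation defect by a genuinely different argument.

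The paper never compares $\bm{\Phi}^{h,N_k}$ with $\bm{\Psi}^h$ directly. It inserts the exact value $\bm{y}(t_{n+1})$, takes the $\mathcal{O}(h^{N_k+2})$ local error of the sweeps from Theorem~\ref{thm:Consistency} and an $\mathcal{O}(h^{M+2})$ local error of $\bm{\Psi}^h$, absorbs the latter via $N_k\le M$, and passes to $\operatorname{d}$ by norm equivalence on $\Omega$. This is exactly your ``alternative'' paragraph.

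Your main route instead uses the fact that, in logarithmic variables, each sweep is one Picard step of $T(\bm{U})^m=\bm{u}^n+h\sum_j Q_{m,j}\bm{G}(\bm{U}^j)$, whose fixed point is the collocation solution \eqref{eq:RK_Log}. Because $\sum_j Q_{m,j}=\tau_m$, the predictor is itself $T$ applied to the constant guess $[\bm{u}^n,\dots,\bm{u}^n]$. Hence the defect is $(hC_QL)^{N_k}E_0=\mathcal{O}(h^{N_k+2})$, measured directly in $\operatorname{d}$.

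The two routes buy different things:
\begin{itemize}
\item The paper's route is shorter because it recycles consistency. However, it needs $\bm{f}\in C^q$ and exact trajectories issuing from $\bm{y}$ and $\bm{z}$, so $C=C_y+C_z$ depends on the data. It also needs an $\mathcal{O}(h^{M+2})$ local error for $\bm{\Psi}^h$, which the paper justifies only by viewing $\bm{\Psi}^h$ as the $N_k\to\infty$ limit of the sweeps.
\item Your route needs only local Lipschitz continuity of $\bm{G}$ and gives a constant uniform over a compact set of data. As your own remark suggests, the hypothesis $N_k\le M$ is an artifact of going through the exact solution. Your main argument never uses it, so it yields \eqref{eq:NkContract} for every $N_k\ge 0$.
\end{itemize}

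Two points need tightening in your uniformity step.
\begin{itemize}
\item The a priori bound on the collocation stages is circular as stated, since it uses the supremum of $\bm{G}$ over the region where the stages are assumed to lie. Close it with the same contraction: for $h$ small, $T$ maps a fixed ball around the constant guess into itself with factor $hC_QL<1$. Banach's theorem then gives a fixed point in that ball, and the uniqueness in Assumption~\ref{ass:Solv} identifies it with $\tilde{\bm{U}}$.
\item Lemma~\ref{lem:SPIDeC_invariant_set} is stated along the numerical trajectory from $\bm{y}(0)$, within a set built from the exact solution's range. For arbitrary data in a compact set, you should invoke its one-step argument rather than its statement.
\end{itemize}
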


\begin{proof}
    By the triangle inequality,
    \begin{equation}\label{eq:TriangleNk}
        \operatorname{d}\bigl(
          \bm{\Phi}^{h,N_k}(\bm{y}),\,
          \bm{\Phi}^{h,N_k}(\bm{z})
        \bigr)
        \leq
        \operatorname{d}\bigl(
          \bm{\Phi}^{h,N_k}(\bm{y}),\,
          \bm{\Psi}^h(\bm{y})
        \bigr)
        +\operatorname{d}\bigl(
          \bm{\Psi}^h(\bm{y}),\,
          \bm{\Psi}^h(\bm{z})
        \bigr)
        +\operatorname{d}\bigl(
          \bm{\Psi}^h(\bm{z}),\,
          \bm{\Phi}^{h,N_k}(\bm{z})
        \bigr).
    \end{equation}
    By Theorem~\ref{thm:NonlinContr}, the middle term 
    satisfies 
    $\operatorname{d}(\bm{\Psi}^h(\bm{y}),
                     \bm{\Psi}^h(\bm{z}))
    \leq\operatorname{d}(\bm{y},\bm{z})$.
    For the first and third terms, we use the local 
    truncation error estimate of 
    Theorem~\ref{thm:Consistency}: since 
    $\bm{\Psi}^h(\bm{y})$ is the collocation solution 
    (the limit of the DeC iteration as 
    $N_k\to\infty$), its local error with respect to 
    the exact ODE flow is $O(h^{M+2})$, while 
    $\bm{\Phi}^{h,N_k}(\bm{y})$ has local error 
    $O(h^{N_k+2})$ (cf.~\eqref{eq:Local_error} and 
    Theorem~\ref{thm:Consistency}). Notice that  the equivalence of $\operatorname{d}$ and $\|\cdot\|$ on the compact set $\Omega$ allows us to convert the  Euclidean error estimate of Theorem~\ref{thm:Consistency} into a bound in the log-Euclidean metric. Therefore, by the 
    triangle inequality applied to the exact solution 
    $\bm{y}(t_{n+1})$,
    \[
        \operatorname{d}\bigl(
          \bm{\Phi}^{h,N_k}(\bm{y}),\,
          \bm{\Psi}^h(\bm{y})
        \bigr)
        \leq
          \operatorname{d}\bigl(
            \bm{\Phi}^{h,N_k}(\bm{y}),\,
            \bm{y}(t_{n+1})
          \bigr)
        +
          \operatorname{d}\bigl(
            \bm{y}(t_{n+1}),\,
            \bm{\Psi}^h(\bm{y})
          \bigr)
        = \mathcal{O}(h^{N_k+2})+\mathcal{O}(h^{M+2}) \leq C_y\,h^{N_k+2},
    \]
    for some constant $C_y>0$ independent of $h$,  where the last inequality uses $N_k\leq M$.  An analogous bound holds for $\bm{z}$ with constant $C_z$. Substituting into~\eqref{eq:TriangleNk} and setting $C=C_y+C_z$  gives~\eqref{eq:NkContract}.
\end{proof}

The comparison between Theorems~\ref{thm:NonlinContr}, ~\ref{thm:PredContr} and \ref{thm:NkContract} highlights the role of correction sweeps in non-linear stability. The predictor scheme ($N_k=0$) is contractive only under a stepsize restriction depending on the dissipativity and Lipschitz constants. In contrast, the full collocation map $\bm{\Psi}^h$ ($N_k\to \infty$) is unconditionally contractive for any $h>0$. Each correction sweep reduces the approximation gap between $\bm{\Phi}^{h,N_k}$ and $\bm{\Psi}^h$ by one order in $h$, as quantified by~\eqref{eq:NkContract}, progressively inheriting the unconditional contractivity of the collocation map. Hence, increasing the number of correction sweeps simultaneously improves the order of accuracy and relaxes the stepsize restriction for non-linear stability. These two effects are not in competition, but arise from the same deferred correction mechanism.

\section{Numerical Experiments}\label{sec:Numerics}
In this section, we present a series of numerical experiments aimed at validating the theoretical findings of the previous sections. Representative test cases are discretized and numerically integrated using the SPIDeC scheme \eqref{eq:SPIDeC}, based on Gauss--Lobatto (GL) and Gauss--Radau (GR) quadrature nodes. The notation SPIDeC-GL($p$) and SPIDeC-GR($p$) denotes the corresponding methods of order $p$, obtained with the minimum number of points $M+1=p$ and correction sweeps $N_k=p-1$. All simulations are conducted using MATLAB R2025b on an Intel(R) Core(TM) i9-14900KF processor operating at 3200 MHz.

The \texttt{Chebfun} environment~\cite{Driscoll2014ChebfunGuide} is employed for the computation of quadrature nodes and for the assembly of the matrix $Q$ described in Section~\ref{subsec:Quadrature}. GL nodes are obtained via the \texttt{lobpts} routine, which generates $M+1$ points in $[-1,1]$ corresponding to the zeros of $(1-x^2)P'_M(x)$, where $P_M(x)$ denotes the Legendre polynomial of degree $M$. The nodes are then mapped to $[0,1]$ through the affine transformation
\begin{equation}
    \tau_m = \frac{\xi_m + 1}{2}, \qquad m=0,\dots,M,
\end{equation}
which guarantees the inclusion of both boundary points. GR nodes are computed using the \texttt{radaupts} routine, which returns the Legendre--Radau points $\{\zeta_m\}_{m=0}^M \subset [-1,1)$, with the left endpoint fixed at $\zeta_0=-1$. To obtain a quadrature rule including the right boundary of the integration interval, the nodes are mapped to $[0,1]$ via reflection and scaling,
\begin{equation}
    \tau_m = \frac{1 - \zeta_{M-m}}{2}, \qquad m=0,\dots,M.
\end{equation}
This construction places the fixed Radau node at $\tau_M=1$. The use of \texttt{Chebfun} ensures high accuracy in the computation of nodes and of the associated quadrature matrices, which is essential for maintaining the consistency of the SPIDeC discretization.

\begin{testcase}[Experimental order of convergence]\label{test:Convergence}

As a first test, we consider the replicator system
\begin{equation}\label{eq:replicator}
    y_i^\prime(t) = y_i(t)\left( f_i(\bm{y}(t)) - \sum_{j=1}^{N} y_j(t) f_j(\bm{y}(t)) \right),\qquad \qquad i=1,\dots,N,
\end{equation}
where $\bm{y}(t)=[y_1(t),\dots,y_N(t)]^\top\in \mathbb{R}^N$ denotes the state vector and $f_i(\bm{y}(t))$ represents the fitness associated with the $i$-th component, for $i=1,\dots,N$.
It is well known that the $(N-1)$-dimensional probability simplex $\Delta^{N-1}\subset \mathbb{R}^N$ and its interior $\operatorname{int}(\Delta^{N-1})$, defined as
\begin{equation*}
    \Delta^{N-1} = \left\{\bm{\xi} \in \mathbb{R}^N : \sum_{i=1}^{N}\xi_i = 1,\ \xi_i \ge 0,\ i=1,\dots,N \right\}, \qquad \operatorname{int}(\Delta^{N-1})= \Delta^{N-1}\cap \mathbb{R}^N_+,
\end{equation*}
are positively invariant under the flow of the replicator dynamics. In particular, if $\bm{y}^0 \in \operatorname{int}(\Delta^{N-1})$, then $y_i(t) > 0$ for all $t > 0$ and for all $i=1,\dots,N$.
From a numerical viewpoint, the geometric numerical integration of \eqref{eq:replicator} has been investigated in \cite{Replicator_Pezzella}, where a second order explicit rational scheme is proposed. 

Here, to assess the convergence of the SPIDeC method \eqref{eq:SPIDeC}, we consider the specific case of \eqref{eq:replicator} with fitness functions $f_i(\bm{y})=\mathrm{f}_i,$ for which the exact solution is provided as 
\begin{equation}\label{eq:exact_sol}
    y_i(t) = \frac{y_i^0 \exp(\mathrm{f}_i t)}{\sum_{j=1}^{N} y_j^0 \exp(\mathrm{f}_j t)}, \qquad i=1,\dots,N.
\end{equation}
This closed-form expression allows for a precise evaluation of the numerical error $E(h)$ and of the Experimental Order of Convergence (EOC) defined as follows
\begin{equation*}
    E(h)=\dfrac{1}{N_t}\sum_{n=0}^{N_t}\|\bm{y}^n-\bm{y}(t_n)\|_\infty, \qquad \qquad EOC=\log_2\left(\dfrac{E(2h)}{E(h)}\right).
\end{equation*}
For the convergence analysis, we set $N=4$ and consider the fitness vector $\bm{\mathrm{f}} = [15, 5, -10, 20]^\top$. The initial condition is chosen as $\bm{y}^0 = \frac{1}{40}[7, 11, 9, 13]^\top \in \operatorname{int}(\Delta^3)$, and the system is integrated up to the final time $T=1$. The results, reported in Tables \ref{tab:GL_full} and \ref{tab:GR_full} and illustrated in Figure \ref{fig:Ar_Test1_Order_Test}, confirm the theoretical findings of Theorem \ref{thm:Convergence_SPIDeC}. In particular, independently of the choice of quadrature nodes, the experimental order of convergence matches the expected one. Specifically, both SPIDeC implementations with $p=M+1$ quadrature points and $N_k=p-1$ correction sweeps achieve order $p$ for $2\leq p \leq 8.$

\begin{table}[htb]
    \centering
    \caption{Numerical error and EOC for SPIDeC-GL($p$) on the replicator problem. Here, $M=N_k=p-1.$}\label{tab:GL_full}
    \scriptsize 
    \setlength{\tabcolsep}{3pt}
    \begin{tabular}{l |cc |cc |cc |cc |cc |cc |cc|}
    \toprule
    \multicolumn{1}{c}{} & \multicolumn{2}{c}{$p=2$} & \multicolumn{2}{c}{$p=3$} & \multicolumn{2}{c}{$p=4$} & \multicolumn{2}{c}{$p=5$} & \multicolumn{2}{c}{$p=6$} & \multicolumn{2}{c}{$p=7$} & \multicolumn{2}{c}{$p=8$} \\
    \cmidrule(lr){2-3} \cmidrule(lr){4-5} \cmidrule(lr){6-7} \cmidrule(lr){8-9} \cmidrule(lr){10-11} \cmidrule(lr){12-13} \cmidrule(lr){14-15}
    $h$ & Error & EOC & Error & EOC & Error & EOC & Error & EOC & Error & EOC & Error & EOC & Error & EOC \\
    \midrule
    $2^{-4}$ & $1.56 \cdot 10^{-2}$ & ---  & $2.03 \cdot 10^{-3}$ & ---  & $4.62 \cdot 10^{-4}$ & ---  & $7.82 \cdot 10^{-5}$ & ---  & $1.19 \cdot 10^{-5}$ & ---  & $1.61 \cdot 10^{-6}$ & ---  & $1.97 \cdot 10^{-7}$ & ---  \\
    $2^{-5}$ & $1.95 \cdot 10^{-3}$ & 3.00 & $1.89 \cdot 10^{-4}$ & 3.42 & $1.93 \cdot 10^{-5}$ & 4.58 & $1.67 \cdot 10^{-6}$ & 5.55 & $1.27 \cdot 10^{-7}$ & 6.56 & $8.57 \cdot 10^{-9}$ & 7.56 & $5.25 \cdot 10^{-10}$ & 8.55 \\
    $2^{-6}$ & $3.82 \cdot 10^{-4}$ & 2.35 & $1.98 \cdot 10^{-5}$ & 3.25 & $9.93 \cdot 10^{-7}$ & 4.28 & $4.27 \cdot 10^{-8}$ & 5.29 & $1.62 \cdot 10^{-9}$ & 6.29 & $5.48 \cdot 10^{-11}$ & 7.29 & $1.68 \cdot 10^{-12}$ & 8.29 \\
    $2^{-7}$ & $8.58 \cdot 10^{-5}$ & 2.16 & $2.26 \cdot 10^{-6}$ & 3.13 & $5.63 \cdot 10^{-8}$ & 4.14 & $1.21 \cdot 10^{-9}$ & 5.14 & $2.29 \cdot 10^{-11}$ & 6.15 & $3.87 \cdot 10^{-13}$ & 7.15 & $5.92 \cdot 10^{-15}$ & 8.15 \\
    $2^{-8}$ & $2.04 \cdot 10^{-5}$ & 2.07 & $2.70 \cdot 10^{-7}$ & 3.07 & $3.35 \cdot 10^{-9}$ & 4.07 & $3.59 \cdot 10^{-11}$ & 5.07 & $3.39 \cdot 10^{-13}$ & 6.07 & $2.88 \cdot 10^{-15}$ & 7.07 & $9.85 \cdot 10^{-17}$ & 5.91 \\
    $2^{-9}$ & $4.97 \cdot 10^{-6}$ & 2.04 & $3.30 \cdot 10^{-8}$ & 3.03 & $2.04 \cdot 10^{-10}$ & 4.04 & $1.09 \cdot 10^{-12}$ & 5.04 & $5.24 \cdot 10^{-15}$ & 6.02 & $1.21 \cdot 10^{-16}$ & 4.57 & --- & --- \\
    $2^{-10}$ & $1.23 \cdot 10^{-6}$ & 2.02 & $4.07 \cdot 10^{-9}$ & 3.02 & $1.26 \cdot 10^{-11}$ & 4.02 & $3.39 \cdot 10^{-14}$ & 5.01 & $1.47 \cdot 10^{-16}$ & 5.16 & --- & --- & --- & --- \\
    $2^{-11}$ & $3.05 \cdot 10^{-7}$ & 2.01 & $5.06 \cdot 10^{-10}$ & 3.01 & $7.83 \cdot 10^{-13}$ & 4.01 & $1.09 \cdot 10^{-15}$ & 4.95 & --- & --- & --- & --- & --- & --- \\
    \bottomrule
    \end{tabular}
\end{table}
\begin{table}[htb]
    \centering
    \caption{Numerical error and EOC for SPIDeC-GR($p$) on the replicator problem. Here, $M=N_k=p-1.$}\label{tab:GR_full}
    \scriptsize
    \setlength{\tabcolsep}{3pt}
    \begin{tabular}{l |cc |cc |cc |cc |cc |cc |cc|}
    \toprule
    \multicolumn{1}{c}{} & \multicolumn{2}{c}{$p=2$} & \multicolumn{2}{c}{$p=3$} & \multicolumn{2}{c}{$p=4$} & \multicolumn{2}{c}{$p=5$} & \multicolumn{2}{c}{$p=6$} & \multicolumn{2}{c}{$p=7$} & \multicolumn{2}{c}{$p=8$} \\
    \cmidrule(lr){2-3} \cmidrule(lr){4-5} \cmidrule(lr){6-7} \cmidrule(lr){8-9} \cmidrule(lr){10-11} \cmidrule(lr){12-13} \cmidrule(lr){14-15}
    $h$ & Error & EOC & Error & EOC & Error & EOC & Error & EOC & Error & EOC & Error & EOC & Error & EOC \\
    \midrule
    $2^{-4}$ & $1.22 \cdot 10^{-2}$ & ---  & $2.06 \cdot 10^{-3}$ & ---  & $4.62 \cdot 10^{-4}$ & ---  & $7.83 \cdot 10^{-5}$ & ---  & $1.19 \cdot 10^{-5}$ & ---  & $1.61 \cdot 10^{-6}$ & ---  & $1.97 \cdot 10^{-7}$ & ---  \\
    $2^{-5}$ & $1.66 \cdot 10^{-3}$ & 2.88 & $1.90 \cdot 10^{-4}$ & 3.44 & $1.93 \cdot 10^{-5}$ & 4.58 & $1.67 \cdot 10^{-6}$ & 5.55 & $1.27 \cdot 10^{-7}$ & 6.56 & $8.57 \cdot 10^{-9}$ & 7.56 & $5.25 \cdot 10^{-10}$ & 8.55 \\
    $2^{-6}$ & $3.35 \cdot 10^{-4}$ & 2.31 & $1.99 \cdot 10^{-5}$ & 3.26 & $9.93 \cdot 10^{-7}$ & 4.28 & $4.27 \cdot 10^{-8}$ & 5.29 & $1.62 \cdot 10^{-9}$ & 6.29 & $5.48 \cdot 10^{-11}$ & 7.29 & $1.68 \cdot 10^{-12}$ & 8.29 \\
    $2^{-7}$ & $7.62 \cdot 10^{-5}$ & 2.14 & $2.26 \cdot 10^{-6}$ & 3.13 & $5.63 \cdot 10^{-8}$ & 4.14 & $1.21 \cdot 10^{-9}$ & 5.14 & $2.29 \cdot 10^{-11}$ & 6.15 & $3.87 \cdot 10^{-13}$ & 7.15 & $5.93 \cdot 10^{-15}$ & 8.15 \\
    $2^{-8}$ & $1.82 \cdot 10^{-5}$ & 2.06 & $2.70 \cdot 10^{-7}$ & 3.07 & $3.35 \cdot 10^{-9}$ & 4.07 & $3.59 \cdot 10^{-11}$ & 5.07 & $3.39 \cdot 10^{-13}$ & 6.07 & $2.89 \cdot 10^{-15}$ & 7.07 & $8.71 \cdot 10^{-17}$ & 6.09 \\
    $2^{-9}$ & $4.46 \cdot 10^{-6}$ & 2.03 & $3.30 \cdot 10^{-8}$ & 3.03 & $2.04 \cdot 10^{-10}$ & 4.04 & $1.09 \cdot 10^{-12}$ & 5.04 & $5.24 \cdot 10^{-15}$ & 6.02 & $1.05 \cdot 10^{-16}$ & 4.79 & --- & --- \\
    $2^{-10}$ & $1.10 \cdot 10^{-6}$ & 2.02 & $4.08 \cdot 10^{-9}$ & 3.02 & $1.26 \cdot 10^{-11}$ & 4.02 & $3.38 \cdot 10^{-14}$ & 5.02 & $1.81 \cdot 10^{-16}$ & 4.85 & --- & --- & --- & --- \\
    $2^{-11}$ & $2.74 \cdot 10^{-7}$ & 2.01 & $5.06 \cdot 10^{-10}$ & 3.01 & $7.83 \cdot 10^{-13}$ & 4.01 & $1.14 \cdot 10^{-15}$ & 4.89 & --- & --- & --- & --- & --- & --- \\
    \bottomrule
    \end{tabular}
\end{table}

\begin{figure}[htb] 
    \centering
    \includegraphics[width=0.8\linewidth]{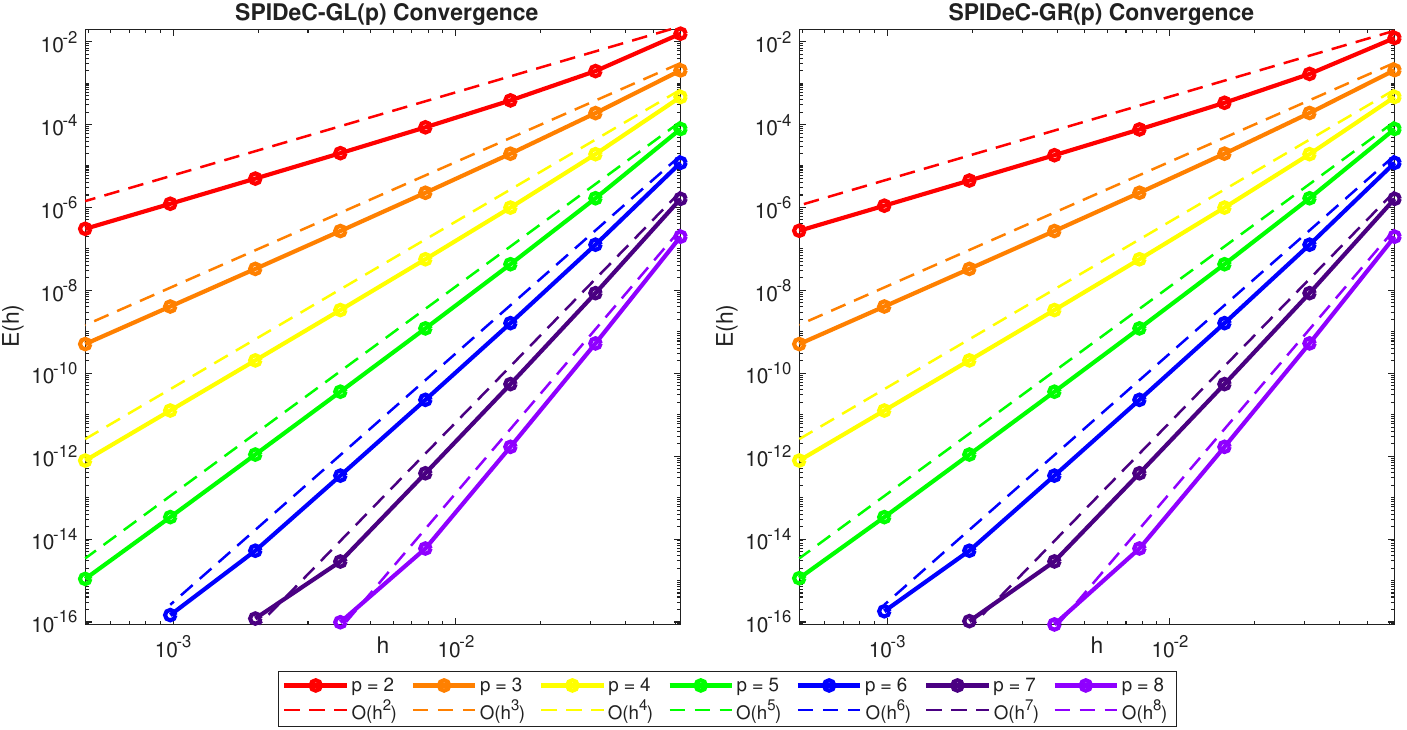}
    \caption{Global numerical error $E(h)$ as a function of the step size $h$ for the SPIDeC-GL and SPIDeC-GR variants applied to Test \ref{test:Convergence}.}\label{fig:Ar_Test1_Order_Test}
\end{figure}

\end{testcase}

\begin{testcase}[Positivity preservation under large stepsizes]
\label{test:positivity}

Theorem~\ref{thm:Positivity_Unc} shows that the SPIDeC framework 
preserves positivity for any $h>0$ and for arbitrary approximation order. 
To illustrate this property in a challenging non-linear regime, 
we consider a predator--prey system with Holling Type~II functional 
responses~\cite{Holling_1959}, defined as in~\eqref{eq:ODE_system} by
\begin{equation}\label{eq:holling}
    f_1(\bm{y}(t)) =  \dfrac{a\varepsilon y_1(t)+(a-b)\,y_1(t)y_2(t)}{\varepsilon + y_2(t)},
    \qquad
    f_2(\bm{y}(t)) =\dfrac{(d-c)\,y_1(t) y_2(t) -c\varepsilon y_2(t)}{\varepsilon + y_1(t)}, \qquad t\in[0,100],
\end{equation}
where $y_1(t)$ and $y_2(t)$ denote the prey and predator population 
densities, respectively. The parameters
\[
a=4.0,\qquad 
b=15.0,\qquad 
c=3.0,\qquad 
d=11.0,\qquad 
\varepsilon=10^{-3}, \qquad \bm{y}^0=[0.02,\,4.0]^\top,
\]
are chosen to induce large-amplitude oscillatory dynamics. The small saturation parameter $\varepsilon=10^{-3}$ makes both 
functional responses nearly singular near the coordinate axes, so that the trajectory repeatedly approaches the boundary of 
$\mathbb{R}^2_{+}$ before recovering. 
This produces a particularly demanding test for positivity-preserving 
time integrators. A high-resolution reference trajectory, computed with 
SPIDeC-GR(10) using $h=10^{-4}$, is reported in 
Figure~\ref{fig:Ar_Test2_Positivity} in logarithmic scales to 
highlight the repeated near-vanishing dynamics of both components.

\begin{figure}[htb]
    \centering
    \includegraphics[width=0.8\linewidth]
    {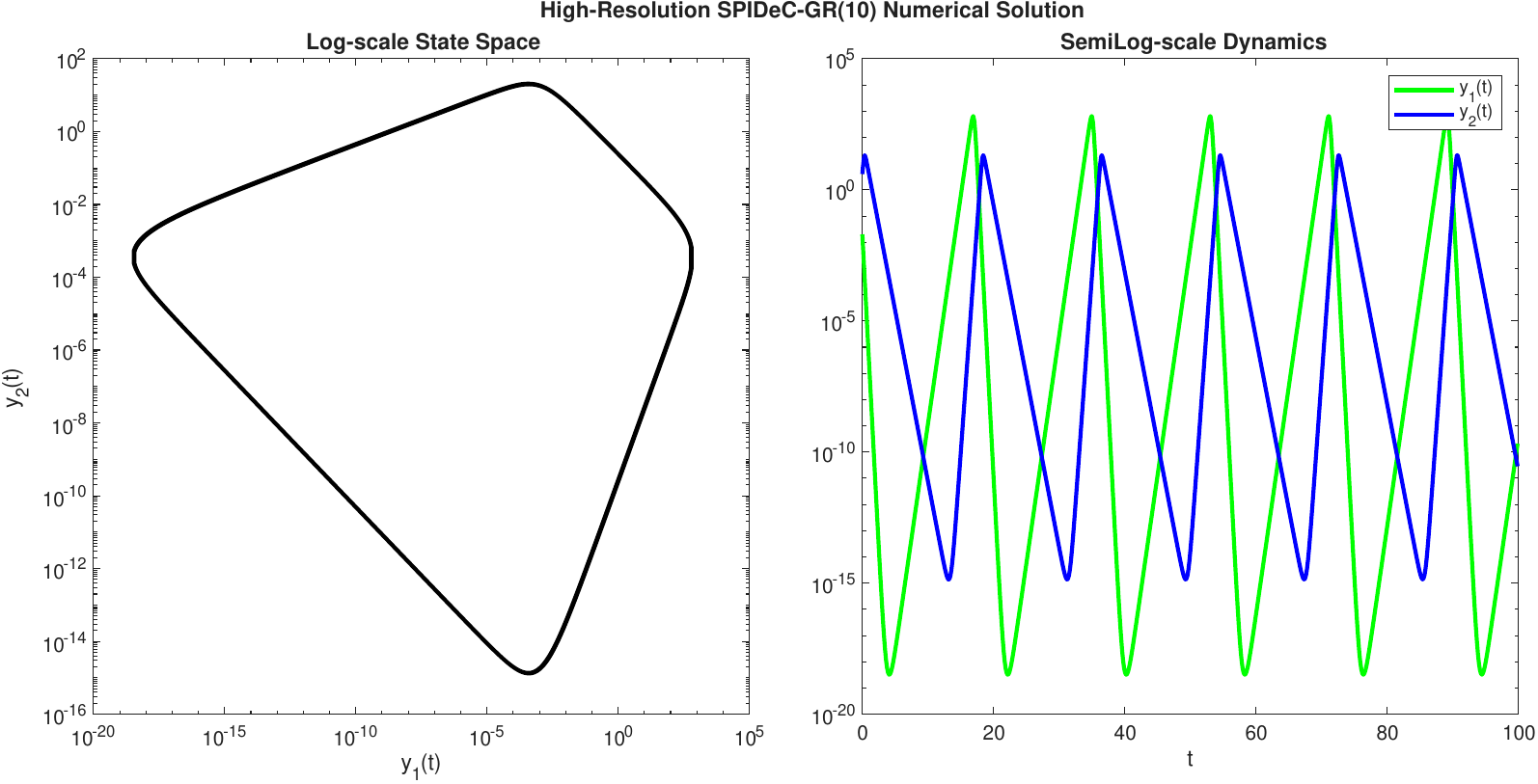}
    \caption{
    High-resolution reference solution of the
    predator--prey system of Test \ref{test:positivity}, computed with 
    SPIDeC-GR(10) using $h=10^{-4}$. 
    The logarithmic representation emphasizes the repeated 
    approach of the trajectory to the boundary of 
    $\mathbb{R}^2_{+}$.
    }
    \label{fig:Ar_Test2_Positivity}
\end{figure}

We compare the SPIDeC-GL($p$) and SPIDeC-GR($p$) methods against 
a representative set of classical explicit Runge--Kutta methods and 
Strong Stability Preserving Runge--Kutta (SSPRK) schemes of orders
$p=2,\dots,6$. 
The comparison metric is the \emph{critical positivity threshold} 
\begin{equation*}
    h^*=\max\left\{h\in \mathcal{H} \ : \ \min_{\substack{n=0,\dots,N_t \\ i=1,\dots,N}}y_i^n(h) > 0 \right\},
\end{equation*}
defined as the largest stepsize in the tested range $\mathcal{H}=[0.1, 5.0]$ for which the numerical solution remains positive throughout the entire integration interval.  Table~\ref{tab:positivity_comparison} summarizes the results. 
All classical explicit methods exhibit positivity thresholds 
restricted to approximately $h^* \lesssim 0.14.$
Increasing the formal approximation order does not enlarge the 
admissible timestep range for positivity preservation, reflecting 
the restrictive non-linear stability constraints of explicit schemes. 
By contrast, both SPIDeC-GL($p$) and SPIDeC-GR($p$) preserve positivity 
for all tested timesteps up to $h=5.0$, independently of the order $p$. 
No visible difference in positivity behavior is observed between 
the Gauss--Lobatto and Gauss--Radau variants, indicating that the 
positivity mechanism is independent of the specific collocation family.

\begin{table}[htb]
    \centering
    \caption{
    Critical positivity threshold $h^*$ for classical explicit methods 
    and SPIDeC applied to the predator--prey 
    system of Test \ref{test:positivity}. 
    Entries in bold indicate unconditional positivity within the 
    tested timestep range.
    }
    \label{tab:positivity_comparison}
    
    \small
    \setlength{\tabcolsep}{6pt}
    
    \begin{tabular}{c | lc | lc}
    \toprule
    Order $p$ 
    & Classical method & $h^*$ 
    & SPIDeC method & $h^*$ \\
    \midrule
    
    $2$  
    & Heun~\cite[Table 1.1]{Hairer_I} 
    & $0.09$ 
    & \textbf{SPIDeC-GL(2)} 
    & $\bm{5.00}$ \\
    
    & SSPRK2~\cite[Equation 2.15]{ShuOsher_1988} 
    & $0.09$ 
    & \textbf{SPIDeC-GR(2)} 
    & $\bm{5.00}$ \\
    
    \midrule
    
    $3$
    & SSPRK3~\cite[Equation 2.18]{ShuOsher_1988} 
    & $0.09$ 
    & \textbf{SPIDeC-GL(3)} 
    & $\bm{5.00}$ \\
    
    & RK3 ~\cite[Table 1.1]{Hairer_I}    
    & $0.14$ 
    & \textbf{SPIDeC-GR(3)} 
    & $\bm{5.00}$ \\
    
    \midrule
    
    $4$
    & RK4~\cite[Table 2.1]{Hairer_I}           
    & $0.11$ 
    & \textbf{SPIDeC-GL(4)} 
    & $\bm{5.00}$ \\
    
    & Ralston4~\cite[Equations (5.11)-(5.12)]{Ralston_1962}  
    & $0.12$ 
    & \textbf{SPIDeC-GR(4)} 
    & $\bm{5.00}$ \\
    
    \midrule
    
    $5$
    & DOPRI5~\cite[Table 2]{DormandPrince_1980}  
    & $0.10$ 
    & \textbf{SPIDeC-GL(5)} 
    & $\bm{5.00}$ \\
    
    & Fehlberg5~\cite[Table III]{Fehlberg_1969}    
    & $0.10$ 
    & \textbf{SPIDeC-GR(5)} 
    & $\bm{5.00}$ \\
    
    \midrule
    
    $6$
    & Luther6~\cite[Equation (3)]{Luther_1968}   
    & $0.05$ 
    & \textbf{SPIDeC-GL(6)} 
    & $\bm{5.00}$ \\
    
    & Verner6~\cite[Table 5.4]{Hairer_I}   
    & $0.10$ 
    & \textbf{SPIDeC-GR(6)} 
    & $\bm{5.00}$ \\
    
    \bottomrule
    \end{tabular}
\end{table}

The experiment numerically confirms the structure-preserving 
mechanism of the SPIDeC framework.
In particular, positivity is retained uniformly with respect to the 
approximation order and remains effective even in highly non-linear 
regimes characterized by near-singular dynamics and large timesteps.

\end{testcase}

\begin{testcase}[Exact reproduction of the semigroup generated by a linear operator]\label{test:Linear}

Our third example concerns the diagonal linear system introduced in~\eqref{eq:Diag_Lin_Sys}. Specifically, we consider the configuration
\begin{equation*}
    \lambda_1=\dfrac{\lambda}{4}, \qquad 
    \lambda_2=\dfrac{\lambda}{2}, \qquad 
    \lambda_3=\dfrac{3}{4}\lambda, \qquad 
    \lambda_4=\lambda, \qquad t\in [0,20],
\end{equation*}
with $\lambda\in\mathbb{R}^-$ acting as a stiffness scaling parameter. We consider a sequence of increasingly stiff configurations obtained by imposing the constraint $h\lambda=-100,$
while varying both the timestep $h$ and the parameter $\lambda$. 
The tested pairs are
\[
    (h,\lambda)\in
    \{(10,-10),\, (5,-20), \, (4,-25), \, (2.5,-40),\, 
    (2,-50), \, (1.25,-80), \, (1,-100)\}.
\]
Hence, although the continuous dynamics become progressively stiffer as $|\lambda|$ increases, the product $h\lambda$ remains fixed at a large negative value. This setting simultaneously probes the behavior of the method under large timesteps and highly dissipative regimes.

Figure~\ref{fig:Ar_Test3_Linear} reports the stiffest component of the numerical solutions obtained with SPIDeC-GL($p$), for $p=2,\dots,5$, together with the corresponding exact exponential decays. The numerical trajectories are visually indistinguishable from the exact semigroup evolution over the entire integration interval, even in the presence of extremely large negative values of $h\lambda$. To make the comparison observable at graphical resolution, the exact exponential profiles are represented by dashed curves shifted by a constant factor $10^{-9}$. This artificial offset avoids the complete overlap between the numerical and exact solutions while preserving the exact decay rate, which is reflected by the identical slope of the corresponding curves in the logarithmic scale.

A quantitative confirmation of the exact preservation property established in Theorem~\ref{Thm:Linear_Stability} is obtained by evaluating the maximum approximation error
\begin{equation*}
     E_\infty(h,\lambda)
    =
    \max_{n=0,\dots,N_t}
    \left|
        y_4^n-\exp(\lambda t_n)
    \right|,
\end{equation*}
whose values are reported in Table \ref{tab:linear_semigroup_roundoff}. We observe that $E_\infty(h,\lambda)$ remains extremely small across all tested configurations, reaching values of the order of $10^{-57}$ in the least favourable case. More importantly, the error is invariant with respect to the pair $(h,\lambda)$ and depends only on the order $p$, hence on the number of floating-point operations involved. These results confirm the exact semigroup preservation property stated in Theorem~\ref{Thm:Linear_Stability}. The observed discrepancy is entirely due to round-off accumulation in finite precision arithmetic and does not reflect any discretization error of the scheme.

\begin{figure}[htb]
    \centering
    \includegraphics[width=1\linewidth]
    {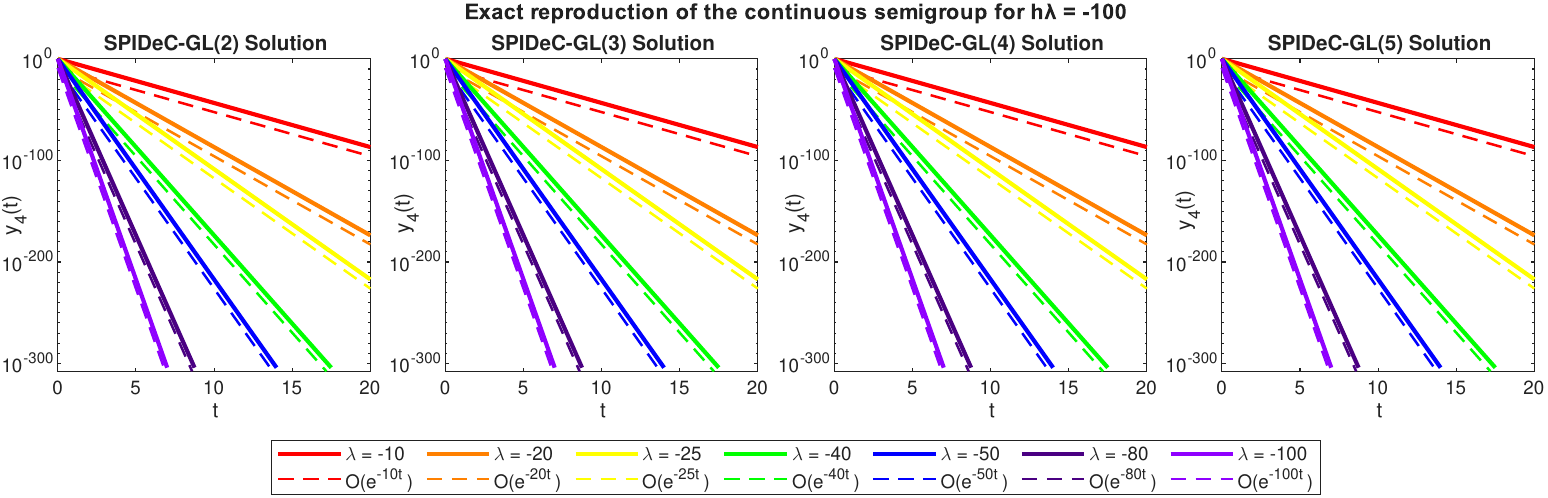}
    \caption{
    Stiff component of the SPIDeC-GL($p$) numerical solution of Test \ref{test:Linear} compared with the exact exponential decay. Exact solutions are represented by dashed curves with a vertical shift of $10^{-9}$ for visual clarity.
    }
    \label{fig:Ar_Test3_Linear}
\end{figure}
\begin{table}[htb]
    \centering
    \caption{
    Maximum error $E_\infty(h,\lambda)$ for SPIDeC-GL($p$) applied to the linear diagonal Test \ref{test:Linear}.
    }
    \label{tab:linear_semigroup_roundoff}
    \small
    \setlength{\tabcolsep}{8pt}
    \begin{tabular}{cc|cccc}
    \toprule
    $h$ & $-\lambda$ & $p=2$ & $p=3$ & $p=4$ & $p=5$ \\
    \midrule
    $10$ & $10$   & $4.98\cdot10^{-60}$ & $5.38\cdot10^{-58}$ & $1.05\cdot10^{-57}$ & $5.38\cdot10^{-58}$ \\
    $5$ & $20$    & $4.98\cdot10^{-60}$ & $5.38\cdot10^{-58}$ & $1.05\cdot10^{-57}$ & $5.38\cdot10^{-58}$ \\
    $4$ & $25$    & $4.98\cdot10^{-60}$ & $5.38\cdot10^{-58}$ & $1.05\cdot10^{-57}$ & $5.38\cdot10^{-58}$ \\
    $2.5$ & $40$  & $4.98\cdot10^{-60}$ & $5.38\cdot10^{-58}$ & $1.05\cdot10^{-57}$ & $5.38\cdot10^{-58}$ \\
    $2$ & $50$    & $4.98\cdot10^{-60}$ & $5.38\cdot10^{-58}$ & $1.05\cdot10^{-57}$ & $5.38\cdot10^{-58}$ \\
    $1.25$ & $80$  & $4.98\cdot10^{-60}$ & $5.38\cdot10^{-58}$ & $1.05\cdot10^{-57}$ & $5.38\cdot10^{-58}$ \\
    $1$ & $100$    & $4.98\cdot10^{-60}$ & $5.38\cdot10^{-58}$ & $1.05\cdot10^{-57}$ & $5.38\cdot10^{-58}$ \\
    \bottomrule
    \end{tabular}
\end{table}
\end{testcase}

\begin{testcase}[Non-linear logarithmic contractivity]
\label{test:contractivity}

In order to provide numerical validation of the non-linear stability findings of Section~\ref{subsec:NonlinStab}, we consider the two-dimensional system
\begin{equation}\label{eq:test_contract_sys}
    f_1(\bm{y}(t)) = -\alpha y_1(t) + \beta\sin(y_2(t)) + e^{-y_1^2(t)},
    \qquad
    f_2(\bm{y}(t)) = -\alpha y_2(t) - \beta\sin(y_1(t)) + e^{-y_2^2(t)},
    \qquad
    \alpha = 1.2,\quad \beta = 0.4,
\end{equation}
with initial conditions $\bm y^0=[1.8,\,0.7]^\top$ and $\bm z^0=[1.6,\,1.1]^\top.$ Since the vector field remains strictly inward-pointing along both coordinate axes, i.e. 
\begin{equation*}
    \lim_{y_i\to0^+}f_i(y_i,y_j)\geq 1-\beta >0, \qquad i\neq j\in \{1,2\},
\end{equation*}
the positive orthant is invariant for the dynamics of \eqref{eq:test_contract_sys} and the Assumption \ref{ass:Positivity} holds. Introducing the logarithmic transformation \eqref{eq:LogSystem} and evaluating the Jacobian matrix $J_{\bm G}$ locally at $\bm e=[1,1]^\top$ yields the following estimates of the logarithmic Lipschitz constant
and the dissipativity coefficient
\begin{equation*}
    L=\|J_{\bm G}(\bm y_{\bm{e}})\|_2\approx1.461,
    \qquad
    \nu=
    \lambda_{\max}\!\left(
    \frac{
    J_{\bm G}(\bm y_{\bm{e}})
    +
    J_{\bm G}(\bm y_{\bm{e}})^\top
    }{2}
    \right)\approx -0.767.
\end{equation*}

As a first step, we investigate the contractivity of the predictor scheme established in Theorem \ref{thm:PredContr}. For different values of $h\in[10^{-5},10^2],$ we compute the logarithmic distance ratio, or numerical contraction factor, as follows 
\begin{equation*}
    \operatorname{contr}(h)=\frac{\operatorname d(\bm\Phi^{h,0}(\bm y),\bm\Phi^{h,0}(\bm z))}{\operatorname d(\bm y,\bm z)},
\end{equation*}
where $\operatorname d$ denotes the log-Euclidean metric defined in \eqref{eq:distance}. Here, the predictor map $\bm\Phi^{h,0}$ in \eqref{eq:Phi} is computed performing a single step of the SPIDeC-GL($1$) scheme. The results are shown in the left panel of Figure \ref{fig:Ar_Test4_Predictor}. From the experiments, $\operatorname{contr}(h)$ remains below unity for small values of $h$ and crosses the threshold near the theoretically predicted value $h_{\nu}=2 |\nu|L^{-2}\approx0.72$, in close agreement with the estimate of Theorem~\ref{thm:PredContr}. For larger timesteps, the predictor loses contractivity, thus motivating the introduction of correction sweeps. In any case, the theoretical estimate \eqref{eq:PredContract} accurately bounds the observed contraction factor over the entire range of tested timesteps. 

To address the convergence of the DeC iteration towards the collocation map in \eqref{eq:CollocationMap} (cf. Theorem~\ref{thm:NkContract}), we consider SPIDeC-GR methods with the non-contractive stepsize $h=2.$ Specifically, we monitor the logarithmic distance gap
\begin{equation*}
    \operatorname{gap}(N_k)=\Bigl|\operatorname d(\bm\Phi^{h,N_k}(\bm y),\bm\Phi^{h,N_k}(\bm z))-\operatorname d(\bm\Psi^h(\bm y),\bm\Psi^h(\bm z))\Bigr|,
\end{equation*}
where the collocation map $\bm\Psi^h$ is approximated by $\bm\Phi^{h,M},$ as defined in \eqref{eq:Phi}, with $M=200$. The results, reported in the right panel of Figure \ref{fig:Ar_Test4_Predictor}, show that the gap decay is geometric and that the experimental convergence rate as $N_k\to \infty$ is sharper than the conservative estimate provided by Theorem~\ref{thm:NkContract}. 

\begin{figure}[htb]
    \centering
     \includegraphics[width=0.8\linewidth]{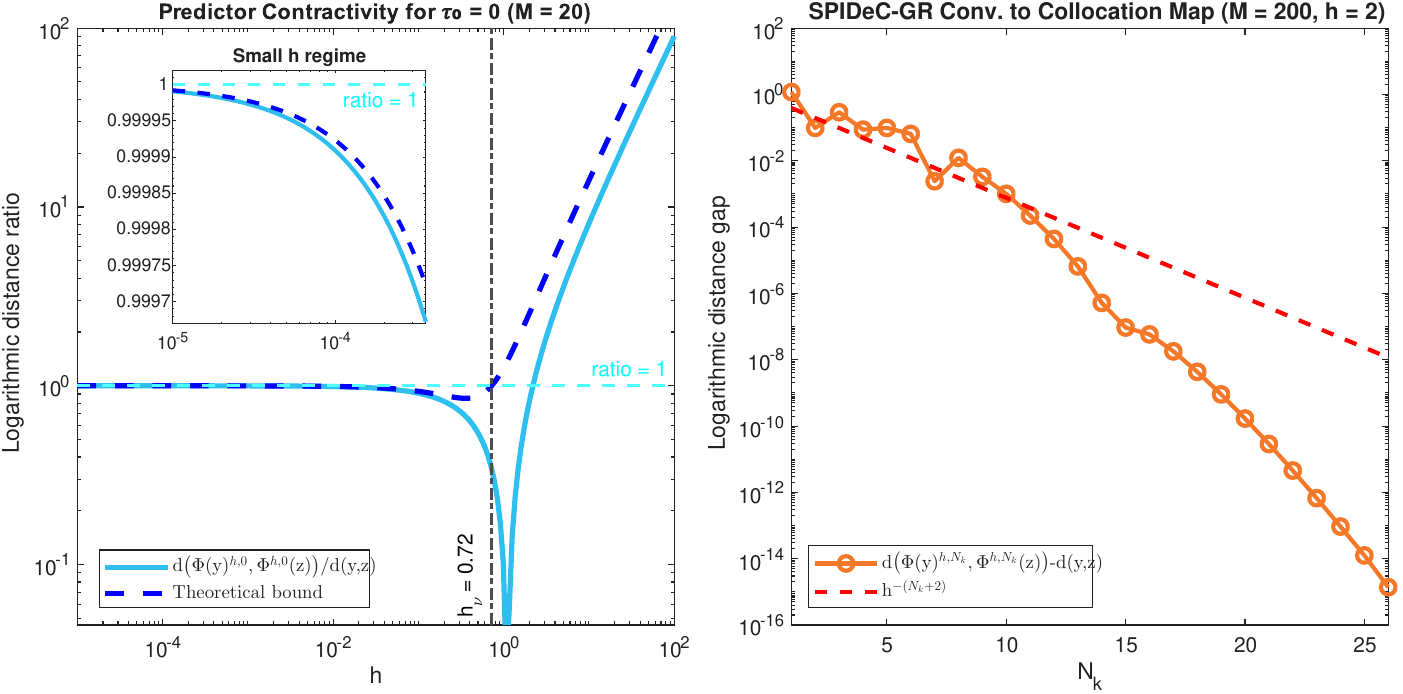}
    \caption{non-linear contractivity Test \ref{test:contractivity}. 
             \textit{Left}: logarithmic distance ratio as a function of the stepsize for the predictor ($N_k=0$, 
             $M=20$ GL nodes), compared with the 
             theoretical bound $\sqrt{1+2h\nu+h^2L^2}$ 
             of Theorem~\ref{thm:PredContr}. 
             \textit{Right}: logarithmic distance gap as a function of the correction sweep ($h=2$, $M=200$ GR nodes) 
             with the geometric reference $h^{-(N_k+2)}$ 
             of Theorem~\ref{thm:NkContract}.}
    \label{fig:Ar_Test4_Predictor}
\end{figure}

Our final experiment aims to clarify how the successive correction sweeps progressively transfer the contractive properties of the collocation map (cf. Theorem \ref{thm:NonlinContr}) to the SPIDeC approximation. To this end, we simulate the system \eqref{eq:test_contract_sys} over the interval $[0,50]$ with $h=2$, $M=4$ and $N_k\in\{12,14,16,18,20,22,24\}$. Both Gauss--Lobatto and Gauss--Radau nodes are considered. The long-time behaviour of the logarithmic distance, after several correction levels, is illustrated in Figure~\ref{fig:Ar_Test4_Log_Distance}. There, the reference exponential profile (black line) represents the exact contraction rate of the continuous flow predicted by~\eqref{eq:ContContractivity}. In the Gauss--Radau case (right panel), increasing the number of correction sweeps progressively restores the theoretical decay rate, and for sufficiently large $N_k$ the discrete dynamics closely reproduce the exact contractive behaviour. By contrast, the Gauss--Lobatto discretization (left panel) exhibits a slower decay and does not recover the asymptotic rate uniformly, reflecting the lack of algebraic stability of the associated Lobatto~IIIA collocation method.

\begin{figure}[htb]
    \centering
    \includegraphics[width=0.8\linewidth]{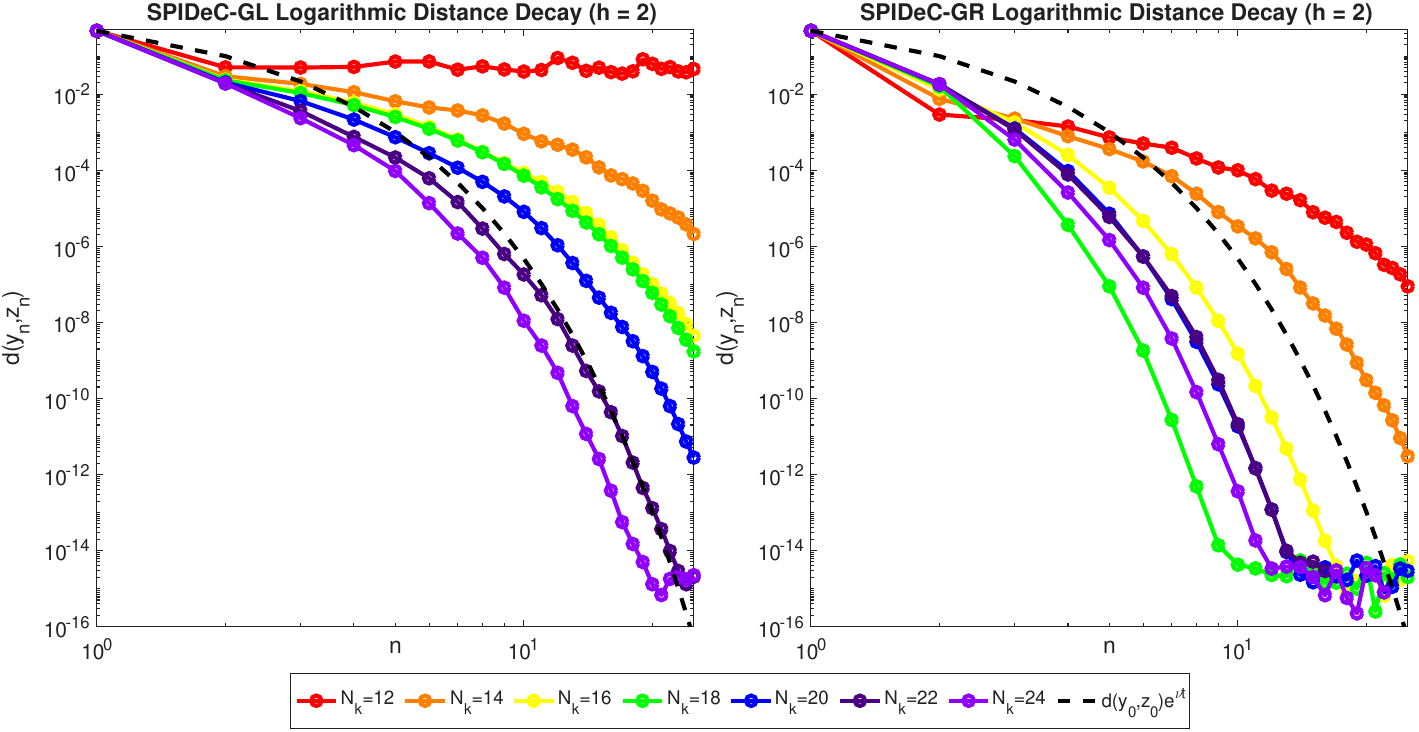}
    \caption{Long-time log-Euclidean distance 
             $\operatorname{d}(\bm{y}^n,\bm{z}^n)$ 
             for SPIDeC-GL (left) and SPIDeC-GR 
             (right) with $h=2$, $M=4$, and 
             $N_k\in\{12,14,\dots,24\}$. 
             The dashed curve 
             $\operatorname{d}(\bm{y}^0,\bm{z}^0)\,
             e^{\nu t}$ represents the theoretical 
             contraction rate~\eqref{eq:ContContractivity}. 
             }
    \label{fig:Ar_Test4_Log_Distance}
\end{figure}

Overall, the positive system \eqref{eq:test_contract_sys} provides a comprehensive validation of the non-linear logarithmic stability framework developed in Section~\ref{subsec:NonlinStab}. The predictor iteration is contractive only under a suitable time-step restriction, correction sweeps reduce the discrepancy with the collocation map at a geometric rate, and sufficiently accurate Gauss--Radau DeC approximations recover the unconditional logarithmic contractivity of the continuous flow, together with the corresponding asymptotic decay rate.
\end{testcase}

\begin{testcase}[Pattern formation in a chemotaxis-driven PDE system]
\label{test:PDE}
 
The role of structure-preserving numerical integration in pattern-forming systems has been widely recognized in the literature (see~\cite{Geco_Eco,Monti2025,SETTANNI} and references therein). Indeed, inappropriate time discretizations may alter the qualitative dynamics of the continuous model and even induce spurious Turing instabilities in parameter regimes where the homogeneous equilibrium is stable. Motivated by these considerations, we investigate the performance of the SPIDeC methodology on positive pattern-forming PDEs and compare it with the symplectic-based strategy proposed in~\cite{Monti2025}.
 
We consider two reaction--diffusion--chemotaxis models exhibiting distinct classes of chemotaxis-driven spatial patterns. In both cases, the unknown positive functions $u(x,y,t)$ and $v(x,y,t)$ represent the density of a biological population and the concentration of a self-produced chemo-attractant, respectively.

Our first case study concerns the two-dimensional Mimura--Tsujikawa reaction--diffusion--chemotaxis system~\cite{Mimura1996}
\begin{equation}\label{eq:MT}
    \begin{cases}
        \partial_t u(x,y,t) = D_u\,\Delta u(x,y,t) -\beta\,\nabla\!\cdot\!(u(x,y,t)\,\nabla v(x,y,t)) +q\,u(x,y,t)(1-u(x,y,t)), \\ \partial_t v(x,y,t) = D_v\,\Delta v(x,y,t) + k_1\, u(x,y,t) - k_2\, v(x,y,t),
    \end{cases}
    \quad (x,y,t) \in \Omega \times[0,T],
\end{equation}
on the square domain $\Omega=[0,L_x]\times[0,L_y]$ and supplemented with homogeneous Neumann boundary conditions. Following~\cite{Monti2025}, we select the parameter values
\begin{equation*}
    D_u=0.0625,\qquad D_v=1,\qquad \beta=17,\qquad q=7,\qquad k_1=1,\qquad k_2=32,\qquad L_x=L_y=6,\qquad T=600,
\end{equation*}
for which the spatially homogeneous equilibrium state $P^{*}=[1,k_1/k_2]^\top$ undergoes a chemotaxis-driven instability leading to the emergence of hexagonal Turing patterns~\cite{Monti2025}.

As a second benchmark, we consider the two-compartment spatial version of the MOMOS (Modelling Organic changes by Micro-Organisms of Soil) model~\cite{Hammoudi2018},
\begin{equation}\label{eq:MOMOS}
    \begin{cases}
        \partial_t u(x,y,t) = D_u\,\Delta u(x,y,t) -\beta\,\nabla\!\cdot\!(u(x,y,t)\,\nabla v(x,y,t))-k_1 \, u(x,y,t)-q \, u^2(x,y,t)+k_2 \, v(x,y,t) , \\ \partial_t v(x,y,t) = D_v\,\Delta v(x,y,t)+k_1 \, u(x,y,t)-k_2 \, v(x,y,t)+c
    \end{cases}
    \ (x,y,t) \in \Omega \times[0,T],
\end{equation}
considered on the square domain $\Omega=[0,L_x]\times[0,L_y]$ with periodic boundary conditions. The parameters are chosen as
\begin{equation*}
    D_u=0.6,\qquad D_v=0.6,\qquad \beta=1.2,\qquad
    k_1=0.4,\qquad k_2=0.6,\qquad q=0.075,\qquad c=0.8,\qquad
    L_x=L_y=25,\qquad T=10^4.
\end{equation*}
The positive spatially homogeneous equilibrium is given by $P^*=\left[\sqrt{\frac{c}{q}}\,,\frac{k_1\sqrt{c/q}+c}{k_2}\right]^\top.$ The selected parameter regime belongs to the chemotaxis-driven instability region identified in~\cite[Theorem~1]{Monti2025}, where spot-type spatial patterns are expected to emerge.

The PDE systems \eqref{eq:MT} and \eqref{eq:MOMOS} are discretised in space by the Method of Lines on a uniform grid consisting of $80^2$ nodes. Diffusion terms are approximated by second order central finite differences, while the chemotactic flux is discretised using the non-linear finite-difference formulation proposed in~\cite[Appendix~A]{Monti2025}. The resulting semi-discrete problems can be written in the form of~\eqref{eq:ODE_system}, with $N=12800$ and
\begin{equation*}
    \bm{y}=[u_1,\ldots,u_{6400},v_1,\ldots,v_{6400}]^\top\in\mathbb{R}_{+}^{N}.
\end{equation*}
Following the numerical experiments reported in~\cite{Monti2025}, the initial conditions are generated as random perturbations of the corresponding homogeneous equilibrium state $P^*$.
\begin{figure}[htb]
    \centering
    \includegraphics[width=1\linewidth]{Figures/Ar_Test5_MT.pdf}
    \caption{Numerical solution of the Mimura--Tsujikawa system~\eqref{eq:MT} at final time $T=600$, computed via the SPIDeC-GR($5$) scheme with step size $h=10^{-3}$. The spatial distribution illustrates the emergence of the expected hexagonal Turing patterns driven by chemotactic instability.}
    \label{fig:Ar_Test5_MT}
\end{figure}
\begin{figure}[htb]
    \centering
    \includegraphics[width=1\linewidth]{Figures/Ar_Test5_MOMOS.pdf}
    \caption{Numerical solution of the MOMOS system~\eqref{eq:MOMOS} at final time $T=10^4$, integrated using the SPIDeC-GR($5$) method with step size $h=10^{-2}$. The simulation accurately captures the formation of spot-type spatial patterns.}
    \label{fig:Ar_Test5_MOMOS}
\end{figure}
Herein, we evaluate the performance of four time integration schemes: the Symplectic Euler (SE) method~\cite[eq.~(11)]{Monti2025}, its semi-implicit variant (IMSP\_IE)~\cite[eq.~(12)]{Monti2025}, and the SPIDeC-GR(p) family for $p=3,5$. The numerical integration is performed using the stepsizes already adopted in~\cite{Monti2025}, namely $h=10^{-3}$ for~\eqref{eq:MT} and $h=10^{-2}$ for~\eqref{eq:MOMOS}. The outcomes corresponding to the SPIDeC-GR(5) scheme are illustrated in Figures~\ref{fig:Ar_Test5_MT} and \ref{fig:Ar_Test5_MOMOS}.
The temporal evolution of both the spatial mean and the Euclidean norm of successive differences, here omitted for brevity, indicates a clear convergence toward steady states. Within this framework, all considered schemes yield geometrically consistent results. However, the higher order SPIDeC methods achieve superior accuracy at the expense of increased computational cost (see Figures~\ref{fig:Mimura_Final_Err} and \ref{fig:Momos_Final_Err} for representations of the final-time spatial error on $u$).
\begin{figure}[htb]
    \centering
    \includegraphics[width=1\linewidth]{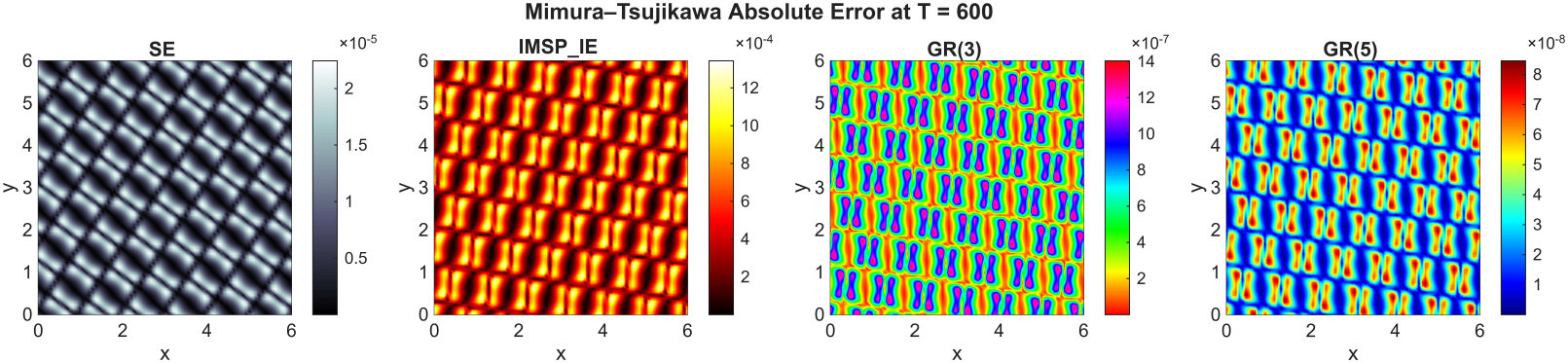}
    \caption{Spatial distribution of the final-time absolute error on the population density $u$ for the Mimura--Tsujikawa model~\eqref{eq:MT}, evaluated against the reference \texttt{ode15s} solution.}
    \label{fig:Mimura_Final_Err}
\end{figure}
\begin{figure}[htb]
    \centering
    \includegraphics[width=1\linewidth]{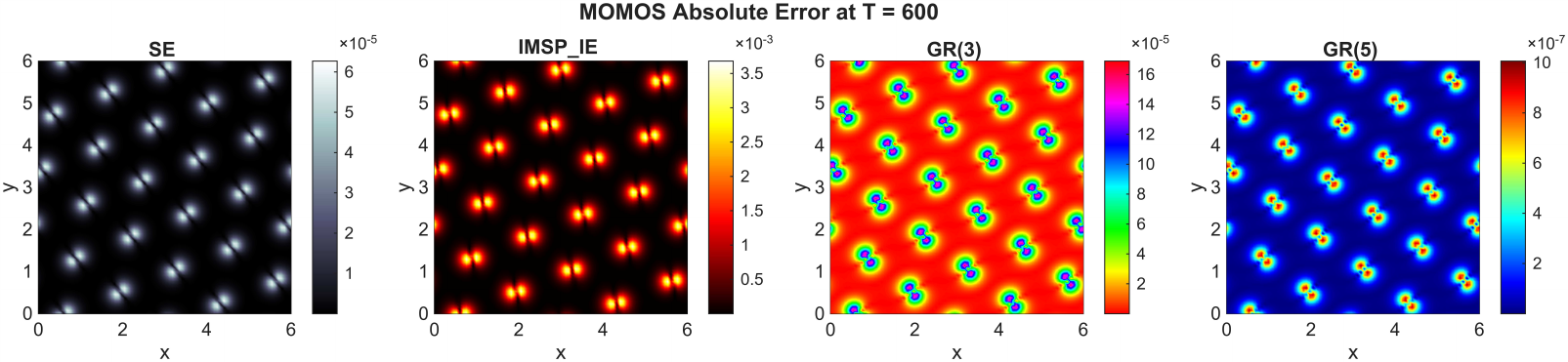}
    \caption{Spatial distribution of the final-time absolute error on the species density $u$ for the MOMOS system~\eqref{eq:MOMOS}, evaluated against the reference \texttt{ode15s} solution.}
    \label{fig:Momos_Final_Err}
\end{figure}

To rigorously assess this accuracy–cost trade-off, the reference solution is computed via MATLAB’s \texttt{ode15s} solver, accelerated through the provision of the analytical Jacobian. We then define the normalized efficiency index
\begin{equation*}
    \eta = \dfrac{\mathcal{E}_{\min}}{\mathcal{E}} \cdot \dfrac{\mathcal{T}_{\min}}{\mathcal{T}},
\end{equation*}
where $\mathcal{E}$ and $\mathcal{T}$ denote the $L^2$ error on $u$ and the CPU time of a specific method, respectively, while $\mathcal{E}_{\min}$ and $\mathcal{T}_{\min}$ represent their minimal values across all tested schemes. By definition, $0 < \eta \le 1$, with $\eta = 1$ representing an ideal theoretical optimum. The comparative findings, detailed in Table~\ref{tab:efficiency_rank}, reveal that for both semi-discretized PDE systems, SPIDeC-GR($5$) and IMSP\_IE stand as the most and the least efficient integrator, respectively. The comparison of SE and SPIDeC-GR($3$) is less sharp, with the latter exhibiting superior efficiency only in the case of the highly stiff Mimura–Tsujikawa model.

\begin{table}[htb]
\centering
\caption{Computational cost, final $L^2$ error, normalized efficiency index $\eta$, and ranking based on $\eta$ (I = best, IV = worst).}
\label{tab:efficiency_rank}
\scriptsize
\setlength{\tabcolsep}{5pt}
\begin{tabular}{lcccc|cccc}
\toprule
& \multicolumn{4}{c|}{\textbf{Mimura--Tsujikawa}} & \multicolumn{4}{c}{\textbf{MOMOS}} \\
\cmidrule(lr){2-5} \cmidrule(lr){6-9}
Method & CPU time & $L^2$ error & $\eta$ & Rank
        & CPU time & $L^2$ error & $\eta$ & Rank \\
\midrule
SE
& $3.44\cdot10^{2}$ & $1.11\cdot10^{-5}$ & $3.57\cdot10^{-3}$ & III
& $6.14\cdot10^{2}$ & $1.28\cdot10^{-5}$ & $1.62\cdot10^{-2}$ & II \\
IMSP\_IE
& $8.60\cdot10^{2}$ & $6.25\cdot10^{-4}$ & $2.53\cdot10^{-5}$ & IV
& $1.14\cdot10^{3}$ & $7.53\cdot10^{-4}$ & $1.48\cdot10^{-4}$ & IV \\
SPIDeC-GR(3)
& $1.48\cdot10^{3}$ & $6.70\cdot10^{-7}$ & $1.37\cdot10^{-2}$ & II
& $1.96\cdot10^{3}$ & $3.49\cdot10^{-5}$ & $1.86\cdot10^{-3}$ & III \\
SPIDeC-GR(5)
& $3.45\cdot10^{3}$ & $3.96\cdot10^{-8}$ & $9.96\cdot10^{-2}$ & I
& $5.69\cdot10^{3}$ & $2.08\cdot10^{-7}$ & $1.08\cdot10^{-1}$ & I \\
\bottomrule
\end{tabular}
\end{table}

\end{testcase}
 
\section{Conclusion}\label{sec:Conclusion}

In this paper, we have developed and analyzed the SPIDeC framework for the numerical integration of positive dynamical systems. By embedding a deferred correction mechanism within an exponential-type Volterra integral reformulation, we obtained a class of integrators that unconditionally preserve both positivity and equilibria, independently of the time step size. This multiplicative structure overcomes the classical order limitations of linear positivity-preserving schemes and allows the systematic construction of arbitrarily high-order methods through explicit-in-sweep correction iterations applied to a low-order base approximation. From a theoretical standpoint, the analysis shows that the resulting SPIDeC integrators are L-stable and exactly reproduce the continuous semigroup generated by diagonal linear operators. Moreover, the use of Gauss–Radau quadrature nodes leads to a discrete flow that becomes increasingly contractive in a logarithmic sense as the number of correction sweeps increases. As a consequence, nonlinear stability properties become progressively stronger. These findings are consistently supported by numerical experiments, which confirm the accuracy, robustness, and efficiency of the proposed approach across representative test problems.

Several directions for future research naturally arise from the present work. The multi-level structure of the SPIDeC correction sweeps suggests the development of adaptive time-stepping strategies based on reliable local truncation error indicators. In such a framework, stepsize selection may be driven by inter-level differences. Another promising direction concerns the quadrature and interpolation operators underlying the Volterra-type reformulation. Beyond the classical Lagrange construction considered here, alternative approximation techniques may improve accuracy for steep functions. In particular, de la Vallée Poussin interpolating polynomials \cite{Woula} and kernel-based rational operators \cite{Woula_Mark} represent promising alternatives. Another interesting research direction concerns the adaptation of the SPIDeC framework to broader classes of ODEs generating order-preserving flows. In particular, systems satisfying the Kamke–M{\"u}ller condition provide a natural setting in which to investigate the construction of high-order schemes capable of preserving positivity and other monotonicity properties at the discrete level. Finally, further investigating the application of the SPIDeC framework to systems of PDEs, through its coupling with structure-preserving spatial discretizations, appears as a natural continuation of the present work.

\begin{acknowledgement}
    {\color{jcolour}\textbf{Acknowledgements}} This work has been supported by the \emph{Italian National Group for Scientific Computing} (GNCS) of the National Institute for Advanced Mathematics (INdAM). This research has been accomplished within the \emph{Research ITalian network on Approximation} (RITA) and the SIMAI   \emph{Numerical and Analytical Approximation of Data and Functions with Applications} (ANA$\&$A) activity group of the \emph{Italian Society of Applied and Industrial Mathematics} (SIMAI). 
\end{acknowledgement}
\begin{acknowledgement}
    {\color{jcolour}\textbf{Data Availability Statement}} The codes implementing the numerical methods discussed in this work are available from the author upon reasonable request. No new data were generated or analyzed in this study and references to existing data are provided in the manuscript.
\end{acknowledgement}

\printbibliography

\end{document}